\def\defterm{\textbf}
\def\card{\#\,}
\newcommand{\Mat}{\operatorname{M}}
\newcommand{\Mata}{\operatorname{A}}
\newcommand{\NT}{\operatorname{NT}}
\newcommand{\GL}{\operatorname{GL}}
\newcommand{\Ker}{\operatorname{Ker}}
\newcommand{\Vect}{\operatorname{span}}
\newcommand{\im}{\operatorname{Im}}
\newcommand{\tr}{\operatorname{tr}}
\newcommand{\Sp}{\operatorname{Sp}}
\renewcommand{\setminus}{\smallsetminus}
\def\F{\mathbb{F}}
\def\K{\mathbb{K}}
\def\R{\mathbb{R}}
\def\C{\mathbb{C}}
\def\calB{\mathcal{B}}
\def\calJ{\mathcal{J}}
\def\calV{\mathcal{V}}
\def\calW{\mathcal{W}}
\def\lcro{\mathopen{[\![}}
\def\rcro{\mathclose{]\!]}}
\theoremstyle{definition}
\newtheorem{Def}{Definition}
\newtheorem{Not}[Def]{Notation}
\theoremstyle{plain}
\newtheorem{theo}{Theorem}
\newtheorem{prop}[theo]{Proposition}
\newtheorem{cor}[theo]{Corollary}
\newtheorem{lemme}[theo]{Lemma}
\newtheorem{claim}{Claim}
\theoremstyle{plain}
\theoremstyle{remark}
\newtheorem{Rems}{Remarks}
\newtheorem{Rem}[Rems]{Remark}
\title{Large affine spaces of non-singular matrices}
\author{Cl\'ement de Seguins Pazzis\footnote{Lyc\'ee Priv\'e Sainte-Genevi\`eve, 2, rue
de l'\'Ecole des Postes, 78029 Versailles Cedex, FRANCE.}
\footnote{e-mail address: dsp.prof@gmail.com}}
\begin{document}

\thispagestyle{plain}

\maketitle
\begin{abstract}
Let $\K$ be an arbitrary (commutative) field with at least three elements.
It was recently proven that an affine subspace of $\Mat_n(\K)$
consisting only of non-singular matrices must have a dimension lesser than or equal to $\binom{n}{2}$.
Here, we classify, up to equivalence, the subspaces whose dimension equals $\binom{n}{2}$.
This is done by classifying, up to similarity, all the $\binom{n}{2}$-dimensional linear subspaces of $\Mat_n(\K)$
consisting of matrices with no non-zero invariant vector, reinforcing a classical theorem of Gerstenhaber.
Both classifications only involve the quadratic structure of the field $\K$.
\end{abstract}

\vskip 2mm
\noindent
\emph{AMS Classification :} 15A03, 15A30

\vskip 2mm
\noindent
\emph{Keywords :} affine subspaces, non-zero eigenvalues, alternate matrices, simultaneous triangularization,
non-isotropic quadratic forms, Gerstenhaber theorem

\section{Introduction}

\subsection{Introduction and basic definitions}\label{pureintro}

In this article, we let $\K$ be an arbitrary (commutative) field. We denote by
$\Mat_n(\K)$ the algebra of square matrices with $n$ rows
and entries in $\K$, and by $\GL_n(\K)$ its group of invertible elements.
We also denote by $\Mat_{n,p}(\K)$ the vector space of matrices with $n$ rows, $p$ columns and entries in $\K$.
The transpose of a matrix $M$ is denoted by $M^T$. \\
An \textbf{affine} subspace $\calV$ of $\Mat_n(\K)$ is the translate of a linear subspace $V$ of $\Mat_n(\K)$:
then $V$ is uniquely determined by $\calV$ (it is the set of all matrices $M$ such that $M+\calV=\calV$)
and is called the \textbf{translation vector space} of $\calV$. \\
Given two linear (or affine) subspaces $V$ and $W$ of $\Mat_n(\K)$, we say that $V$ and $W$ are equivalent,
and we write $V \sim W$, if $W=PVQ$ for some $(P,Q)\in \GL_n(\K)^2$;
we say that $V$ and $W$ are similar, and we write $V \simeq W$, if $W=PVP^{-1}$ for some $P \in \GL_n(\K)$. \\
Two matrices $A$ and $B$ of $\Mat_n(\K)$ are called \textbf{congruent}, and we write
$A \approx B$, if $A=PBP^T$ for some $P \in \GL_n(\K)$.
Finally, two quadratic forms $q$ and $q'$ on vector spaces over $\K$ are called \textbf{similar}
if $q'$ is equivalent to $\lambda\,q$ for some $\lambda \in \K \setminus \{0\}$.

\vskip 3mm
Here, we are concerned with the geometry of $\GL_n(\K) \cup \{0\}$ as a cone in the vector space $\Mat_n(\K)$.
From the linear algebraist's viewpoint, the natural questions that one may ask are the following ones:
\begin{itemize}
\item What is the minimal linear (resp.\ affine) subspace of $\Mat_n(\K)$ containing $\GL_n(\K)$?
\item What is the minimal linear subspace of $\Mat_n(\K)$ containing $\Mat_n(\K) \setminus \GL_n(\K)$?
\item What are the maximal linear (resp.\ affine) subspaces included in $\GL_n(\K) \cup \{0\}$?
\item What are the maximal linear (resp.\ affine) subspaces included in $\Mat_n(\K) \setminus \GL_n(\K)$?
\end{itemize}
The first two problems have easy answers: $\GL_n(\K)$ always spans $\Mat_n(\K)$,
the affine subspace it generates is $\Mat_n(\K)$ unless $n=1$ and $\card \K=2$,
and $\Mat_n(\K) \setminus \GL_n(\K)$ spans $\Mat_n(\K)$ unless $n=1$. \\
The last two questions have no clear answer  however and depend widely on the field $\K$.
For example, $\GL_n(\C) \cup \{0\}$ contains no $2$-dimensional linear subspace, whilst
$\GL_{2n}(\R)\cup \{0\}$ always does. As for singular linear subspaces (i.e.\ linear subspaces included in $\Mat_n(\K) \setminus \GL_n(\K)$),
a classification of them is generally considered to be out of reach, even for an algebraically closed field,
although a lot of progress has been made in understanding their structure in the 1980's
(see the works of Atkinson, Lloyd and Stephens \cite{AtkLloyd1,AtkLloyd2,AtkLloyd3,AtkStep} and our recent \cite{dSPclass}).

Rather than try to classify all the linear (resp.\ affine) subspaces contained in $\GL_n(\K)$ or $\Mat_n(\K) \setminus \GL_n(\K)$,
a more modest approach is to find the maximal dimension for such a subspace and to classify
the linear (resp.\ affine) subspaces with a maximal \emph{dimension}. To this day, this problem
has been almost entirely solved:
\begin{itemize}
\item A linear subspace included in $\GL_n(\K) \cup \{0\}$ has dimension at most $n$;
linear subspaces in $\GL_n(\K) \cup \{0\}$ with dimension $n$ correspond to the
structures of (possibly non-associative and non-unital) division algebras on $\K^n$ that are compatible with its vector space structure
(see e.g.\ the last section of \cite{dSPsinglin}).
Note that no such subspace exists when $n \geq 2$ and $\K$ is algebraically closed.
\item An affine subspace included in $\Mat_n(\K) \setminus \GL_n(\K)$
has dimension at most $n(n-1)$. If its dimension is $n(n-1)$, then it is equivalent
to the space of matrices with zero as last column or to its transpose
(unless $n=2$ and $\card \K = 2$ in which case there is an additional equivalence class).
This is a classical result of Dieudonn\'e \cite{Dieudonne} (see also \cite{dSPaffpres} for a simplified proof)
which may be used to classify the endomorphisms of the vector space $\Mat_n(\K)$ that stabilize $\GL_n(\K)$
(see \cite{dSPsinglin}).
\end{itemize}

Here, we will focus on the affine subspaces of $\Mat_n(\K)$ that are included in $\GL_n(\K)$.
Let $\calV$ be such a subspace, and choose $P \in \calV$. Then $P^{-1}\calV$
is also included in $\GL_n(\K)$, contains the identity matrix $I_n$ and has the same dimension as $\calV$.
Denoting by $H$ its translation vector space,
we see that $I_n-\lambda\,M \in \GL_n(\K)$ for every $\lambda \in \K$ and $M \in H$,
hence the linear subspace $H$ has the two following equivalent properties:
\begin{enumerate}[(i)]
\item For every $M \in H$, one has $\Sp(M) \subset \{0\}$, where $\Sp(M)$ denotes the set of eigenvalues of $M$
\emph{in the field} $\K$.
\item No matrix of $H$ possesses a non-zero invariant vector in $\K^n$.
\end{enumerate}

\begin{Def}
A linear subspace $H$ of $\Mat_n(\K)$ is said to have a \defterm{trivial spectrum} if
no matrix of $H$ possesses a non-zero invariant vector in $\K^n$.
\end{Def}

Note that for such a linear subspace $H$ with a trivial spectrum, the affine subspace
$I_n+H$ is included in $\GL_n(\K)$, and so is any subspace equivalent to it.
For example, if we denote by $\NT_n(\K)$ the space of strictly upper triangular matrices of $\Mat_n(\K)$, then
$I_n+\NT_n(\K)$ is an affine subspace of non-singular matrices with dimension $\binom{n}{2}$.

It follows that classifying up to equivalence the affine subspaces of non-singular matrices essentially amounts to classifying up to similarity
the linear subspaces of $\Mat_n(\K)$ with a trivial spectrum. In the case
$\K$ is algebraically closed, the linear subspaces with a trivial spectrum are the linear subspaces of nilpotent matrices:
a famous theorem of Gerstenhaber \cite{Gerstenhaber} states that the dimension of such a subspace is
bounded above by $\binom{n}{2}$ and that equality occurs only for subspaces similar to $\NT_n(\K)$.
It is only very recently that the upper bound $\binom{n}{2}$ has been shown to apply to linear subspaces with a trivial spectrum
for an arbitrary field (see the works of Quinlan \cite{Quinlan} and our own \cite{dSPlargerank}):

\begin{theo}\label{upperbound}
Let $V$ be a linear subspace of $\Mat_n(\K)$ with a trivial spectrum. \\
Then $\dim V \leq \binom{n}{2}$.
\end{theo}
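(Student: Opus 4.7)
The natural approach is induction on $n$. The base case $n=1$ is immediate: a linear subspace of $\K = \Mat_1(\K)$ with trivial spectrum consists only of $0$, so $\dim V = 0 = \binom{1}{2}$.

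For the inductive step, the plan is to exploit a first-column projection. Consider $\phi : V \to \K^n$, $M \mapsto M e_1$; let $C := \im \phi$ and $K := \Ker \phi$, so $\dim V = \dim C + \dim K$. The trivial spectrum hypothesis immediately yields $C \cap \K e_1 = \{0\}$ (otherwise a suitable rescaling of some element of $V$ would fix $e_1$), whence $\dim C \leq n-1$. Every $M \in K$ has zero first column and decomposes as $\begin{pmatrix} 0 & a^T \\ 0 & B \end{pmatrix}$ with $a \in \K^{n-1}$ and $B \in \Mat_{n-1}(\K)$; the projection $M \mapsto B$ produces a subspace $K' \subseteq \Mat_{n-1}(\K)$. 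A crucial observation is that $K'$ itself has trivial spectrum: given $y \neq 0$ with $By = y$ for some $B \in K'$, any preimage $M = \begin{pmatrix} 0 & a^T \\ 0 & B \end{pmatrix} \in K$ fixes the nonzero vector $\begin{pmatrix} a^T y \\ y \end{pmatrix}$, contradicting the trivial spectrum of $V$. The induction hypothesis then gives $\dim K' \leq \binom{n-1}{2}$, and since $\Ker(M \mapsto B)$ is parametrized by the free top row $a \in \K^{n-1}$, we obtain $\dim K \leq \binom{n-1}{2} + (n-1)$.

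Naively combining these estimates produces $\dim V \leq (n-1) + \binom{n-1}{2} + (n-1) = \binom{n}{2} + (n-1)$, which overshoots the desired bound by $n-1$. Closing this gap is the main obstacle. I would attempt to show that $C$ and the \emph{top-row subspace} $A := \{a \in \K^{n-1} : \begin{pmatrix} 0 & a^T \\ 0 & 0 \end{pmatrix} \in V\}$ cannot simultaneously have maximal dimension. Indeed, writing a general $M \in V$ as $\begin{pmatrix} M_{11} & r^T \\ c' & M'' \end{pmatrix}$ and exploiting that $M + \lambda \begin{pmatrix} 0 & a^T \\ 0 & 0 \end{pmatrix}$ lies in $V$ for every $\lambda \in \K$, the absence of a fixed vector of the form $\begin{pmatrix} 1 \\ y \end{pmatrix}$ for this whole pencil should translate into a bilinear identity such as $a^T (M'' - I)^{-1} c' = 0$, valid for every pertinent pair $(a, M)$. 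A careful exploitation of this family of constraints should yield the sharp bound $\dim A + \dim C \leq n-1$, which substituted back delivers $\dim V \leq \binom{n}{2}$.

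The hardest part will be making this pairing argument rigorous and field-independent. A complementary strategy is to split according to whether $V$ admits a common nontrivial invariant subspace $U \subsetneq \K^n$: in the reducible case, a similarity puts $V$ into block upper-triangular form, and applying the induction hypothesis to each diagonal block (both inheriting trivial spectrum) immediately gives $\dim V \leq \binom{k}{2} + \binom{n-k}{2} + k(n-k) = \binom{n}{2}$. The irreducible case, however, resists this reduction and cannot be handled by extension of scalars to the algebraic closure, since trivial spectrum is not preserved under base change (as witnessed by the one-dimensional subspace of $\Mat_2(\R)$ spanned by a rotation). Producing a surrogate for Gerstenhaber's theorem valid over an arbitrary field, or equivalently making the pairing estimate above work uniformly, is where the main technical difficulty of the theorem resides.
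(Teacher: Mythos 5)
Your induction set-up is sound as far as it goes, but the argument stops exactly where the theorem begins: the inequality $\dim A+\dim C\leq n-1$ needed to absorb the $(n-1)$-dimensional overshoot is asserted, not proven. Moreover, the mechanism you propose for it does not work as stated: the identity $a^T(M''-I)^{-1}c'=0$ presupposes that $M''-I$ is invertible, but the trivial-spectrum hypothesis only forbids the eigenvalue $1$ for the full matrix $M$, not for its lower-right block. For instance $\begin{bmatrix} 0 & -1 \\ 1 & 1\end{bmatrix}$ spans a trivial-spectrum line in $\Mat_2(\Q)$ while its lower-right $1\times 1$ block equals $1$. Even granting invertibility, turning the non-singularity of the whole pencil $M+\lambda N$ into a clean bilinear orthogonality between the rows in $A$ and the columns in $C$, uniformly over an arbitrary field, is precisely the hard content of the theorem, and you acknowledge that you have not done it. So this is an honest plan with the decisive step missing.

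For comparison: the paper does not prove Theorem \ref{upperbound} at all; it quotes it from \cite{Quinlan} and \cite{dSPlargerank}. The ingredient that makes the induction close, reproduced in Section \ref{setupsection}, is the lemma that a trivial-spectrum subspace $W\subset\Mat_p(\K)$ always admits a non-zero vector $X$ such that no matrix of $W$ has $\Vect(X)$ as column space. Taking $X=e_n$ after conjugation, and arranging $Ve_n\subset\Vect(e_1,\dots,e_{n-1})$ (possible since $e_n\notin Ve_n$), one considers the matrices of $V$ with zero last column and projects them onto their upper-left $(n-1)\times(n-1)$ block; the kernel of that projection consists of matrices whose column space lies in $\Vect(e_n)$, hence is $\{0\}$. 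This gives the exact count $\dim V=\dim(Ve_n)+\dim V_{ul}\leq (n-1)+\binom{n-1}{2}=\binom{n}{2}$ by induction. In other words, with the right choice of distinguished coordinate the row of extra parameters that inflates your count contributes nothing, and all the difficulty is concentrated in the column-space lemma (itself proved by a separate induction using rank-one matrices and a cycle argument). To complete your write-up you would need to prove that lemma, or your surrogate inequality $\dim A+\dim C\leq n-1$; neither is done here.
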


\begin{Def}
A linear subspace of $\Mat_n(\K)$ with a trivial spectrum is called \textbf{maximal}\footnote{This should
not be confused with the concept of maximality in the set of linear subspaces with a trivial spectrum ordered by
the inclusion of subsets.}
if its dimension is $\binom{n}{2}$.
\end{Def}

Our aim is to classify the maximal linear subspaces of $\Mat_n(\K)$ with a trivial spectrum.
Unlike the case of nilpotent linear subspaces, the structure of the ground field $\K$ plays a large part
in this classification. For example, if there exists a polynomial $t^2-at-b \in \K[t]$ with degree two and no root in $\K$,
then the line spanned by the companion matrix $\begin{bmatrix}
0 & b \\
1 & a
\end{bmatrix}$ is obviously a maximal linear subspace of $\Mat_2(\K)$ with a trivial spectrum and it is not similar to $\NT_2(\K)$.
Another example is given by the space $\Mata_n(\R)$ of skew-symmetric real matrices, which has a trivial spectrum and
dimension $\binom{n}{2}$, although it is not similar to $\NT_n(\R)$ if $n \geq 2$.

\subsection{Reducibility}

\begin{Not}
Let $V$ and $W$ be respective subsets of $\Mat_n(\K)$ and $\Mat_p(\K)$.
Set
$$V \vee W :=\biggl\{\begin{bmatrix}
A & B \\
0 & C
\end{bmatrix} \mid (A,B,C)\in V \times \Mat_{n,p}(\K) \times W
\biggr\} \subset \Mat_{n+p}(\K).$$
\end{Not}

Note that if $V$ and $W$ are maximal linear subspaces with a trivial spectrum, then
$V \vee W$ is a linear subspace with a trivial spectrum and dimension $\binom{n}{2}+\binom{p}{2}+pn=\binom{n+p}{2}$,
hence it is maximal. Notice also that the composition law $\vee$ is associative.

\begin{Def}
A maximal linear subspace of $\Mat_n(\K)$ with a trivial spectrum is called \textbf{irreducible} if
the only linear subspaces of $\K^n$ it stabilizes are $\{0\}$ and $\K^n$ (and we call it reducible otherwise).
\end{Def}

Conversely, let $H$ be a maximal linear subspace of $\Mat_n(\K)$ with a trivial spectrum.
Assume that there is a $p \in \lcro 1,n-1\rcro$ such that $F:=\K^p \times \{0\}$ is stabilized by every matrix of $H$.
Then we may write every matrix of $H$ as
$$M=\begin{bmatrix}
f(M) & g(M) \\
0 & h(M)
\end{bmatrix} \quad \text{for some $(f(M),g(M),h(M))\in \Mat_p(\K) \times \Mat_{p,n-p}(\K) \times \Mat_{n-p}(\K)$.}$$
Therefore $V:=f(H)$ and $W:=h(H)$ are linear subspaces respectively of $\Mat_p(\K)$ and $\Mat_{n-p}(\K)$, each with a trivial
spectrum, and since
$$\binom{n}{2}=\dim H \leq \dim V+\dim W+\dim g(H) \leq \binom{p}{2}+\binom{n-p}{2}+p(n-p)=\binom{n}{2},$$
we find that both $V$ and $W$ are maximal. Hence $H \subset V \vee W$, and since the dimensions are equal,
we deduce that $H =V \vee W$.

Conjugating $H$ with an appropriate invertible matrix, this generalizes as follows:
if $H$ is not irreducible, then
$H \simeq V \vee W$ for some maximal linear subspaces $V$ and $W$ with trivial spectra.
This yields:

\begin{prop}
Let $H$ be a maximal linear subspace of $\Mat_n(\K)$ with a trivial spectrum.
Then there are irreducible maximal linear subspaces $V_1,\dots,V_p$ with trivial spectra such that
$$H \simeq V_1 \vee V_2 \vee \cdots \vee V_p.$$
\end{prop}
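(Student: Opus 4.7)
The plan is a straightforward induction on $n$, exploiting the reduction argument already carried out in the paragraphs immediately preceding the statement.

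For the base case $n=1$, we have $\binom{1}{2}=0$, so $H=\{0\}$; the only linear subspaces of $\K$ are $\{0\}$ and $\K$, so $H$ is vacuously irreducible and the decomposition is trivial with $p=1$. For the inductive step, suppose the result holds in every matrix algebra of size strictly less than $n$, and let $H \subset \Mat_n(\K)$ be a maximal linear subspace with a trivial spectrum. If $H$ is irreducible, set $p=1$ and $V_1=H$. Otherwise, by definition, $H$ stabilizes some proper non-zero subspace $F$ of $\K^n$; choosing $P \in \GL_n(\K)$ sending $F$ to $\K^q \times \{0\}$ for $q:=\dim F \in \lcro 1,n-1\rcro$ and applying the argument displayed before the statement to $PHP^{-1}$, we obtain maximal linear subspaces $V \subset \Mat_q(\K)$ and $W \subset \Mat_{n-q}(\K)$, each with a trivial spectrum, such that $H \simeq V \vee W$.

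Since $q$ and $n-q$ both lie in $\lcro 1,n-1\rcro$, the induction hypothesis yields decompositions $V \simeq A_1 \vee \cdots \vee A_r$ and $W \simeq B_1 \vee \cdots \vee B_s$ into irreducible maximal subspaces with trivial spectra. To conclude, it suffices to combine these similarities. This relies on two routine observations about $\vee$: first, its associativity, which is noted in the excerpt; second, the compatibility of similarity with $\vee$, namely that $V \simeq V'$ and $W \simeq W'$ imply $V \vee W \simeq V' \vee W'$ (conjugate by the block-diagonal matrix built from the two conjugating matrices, which preserves the block upper-triangular shape of $\vee$). Together these give $H \simeq A_1 \vee \cdots \vee A_r \vee B_1 \vee \cdots \vee B_s$, as desired.

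I do not anticipate any serious obstacle: the substantive step, namely the reduction $H \simeq V \vee W$ into maximal subspaces of strictly smaller sizes, is already done before the statement, and the only remaining content is to package the argument as an induction and verify the two trivial lemmas about $\vee$. If anything deserves care, it is making sure that the two invertible matrices supplied by the inductive hypothesis can be assembled into a single conjugating matrix for $H$; this amounts to the second compatibility remark above.
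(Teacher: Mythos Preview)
Your proof is correct and follows exactly the approach the paper intends: the paper establishes the reduction $H \simeq V \vee W$ in the paragraphs preceding the proposition and then simply writes ``This yields'' the statement, leaving the induction implicit. You have merely spelled out that induction and the two routine compatibility facts about $\vee$, which is precisely what is needed.
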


\noindent This suggests that we focus our attention on the irreducible maximal subspaces.

\subsection{Main theorems}\label{1.3}

Denote by $\Mata_n(\K)$ the set of \emph{alternate} matrices of $\Mat_n(\K)$, i.e.\
the skew-symmetric ones with a zero diagonal, i.e.\ the ones for which
$\forall X \in \K^n, \; X^TAX=0$.

\begin{Def}
A matrix $P \in \Mat_n(\K)$ is called \textbf{non-isotropic} if the quadratic form
$X \mapsto X^TPX$ is non-isotropic, i.e. $\forall X \in \K^n \setminus \{0\}, \; X^TPX \neq 0$.
\end{Def}

Notice, in that case, that $P$ is non-singular and that $P^{-1}$ is non-isotropic.
The subspace $P \Mata_n(\K)$ then has dimension
$\binom{n}{2}$ and has a trivial spectrum: indeed, given $A \in \Mata_n(\K)$ and $X \in \K^n$,
$$PAX=X \Rightarrow P^{-1}X=AX \Rightarrow X^TP^{-1}X=0 \Rightarrow X=0.$$

We may now state our main results.

\begin{theo}\label{irreducible}
Assume that $\# \K \geq 3$. Let $n$ be a positive integer.
Then the irreducible maximal linear subspaces of $\Mat_n(\K)$ with a trivial spectrum
are the subspaces of the form $P\Mata_n(\K)$ for a non-isotropic matrix $P\in \GL_n(\K)$.
\end{theo}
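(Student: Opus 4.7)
The statement splits into two inclusions. The \emph{easy} direction, that every $P\Mata_n(\K)$ with $P$ non-isotropic is an irreducible maximal subspace with trivial spectrum, is already largely settled in the paragraph preceding the theorem, where dimension $\binom{n}{2}$ and the trivial spectrum property are verified. What remains is irreducibility. Suppose $F \subset \K^n$ is a non-zero invariant subspace and pick $x \in F \setminus \{0\}$. A direct computation on the generators $E_{ij}-E_{ji}$ shows that $\bigl\{Ax : A \in \Mata_n(\K)\bigr\} = x^\perp$ (the orthogonal hyperplane of $x$ for the standard dot product; the identity holds in any characteristic). Invariance of $F$ under $P\Mata_n(\K)$ then forces $F \supseteq \K x + P(x^\perp)$, whose dimension equals $n$ unless $x \in P(x^\perp)$, i.e.\ unless $x^T P^{-1} x = 0$. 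The latter would contradict the non-isotropy of $P^{-1}$, so $F = \K^n$ and irreducibility is established.

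For the \emph{hard} direction, let $H$ be an irreducible maximal linear subspace of $\Mat_n(\K)$ with trivial spectrum. The plan is to produce a symmetric invertible matrix $Q$ such that $QM \in \Mata_n(\K)$ for every $M \in H$; equivalently, an anisotropic quadratic form $q$ on $\K^n$ whose Lie algebra $\mathfrak o(q)$ of infinitesimal isometries contains $H$. Since $\dim H = \binom{n}{2} = \dim \mathfrak o(q)$, any such inclusion is forced to be an equality, yielding $H = \mathfrak o(q) = P\Mata_n(\K)$ with $P = Q^{-1}$; non-isotropy of $P$ is then equivalent to anisotropy of $q$.

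I would argue by induction on $n$, treating $n \leq 2$ directly: for $n = 2$ an irreducible trivial-spectrum line is spanned by a matrix whose minimal polynomial $t^2 - at - b$ is irreducible over $\K$, and $q(x_1,x_2) := x_2^2 - a x_1 x_2 - b x_1^2$ supplies the required anisotropic form by a short check. In the inductive step, the obstruction is that irreducibility forbids any straightforward restriction to $\Mat_{n-1}(\K)$. My strategy would be to select, via a genericity argument, an element $M_0 \in H$ with controlled invariants (for instance of maximal rank and with a prescribed minimal polynomial), and then to analyze the pencil $\lambda M_0 + M$ for $M \in H$, using the trivial-spectrum hypothesis applied to every element of that pencil to extract internal constraints on $H$. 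Combining these constraints with the maximality of $\dim H = \binom{n}{2}$ should force the symmetric form $Q$ into existence piece by piece.

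The principal obstacle is precisely this construction of $Q$ in the irreducible case, where both the trivial spectrum condition and the exactness of the dimension $\binom{n}{2}$ must be used in an essential way. The hypothesis $\#\K \geq 3$ enters here by providing enough freedom in the choice of scalars $\lambda$ to avoid degenerate coincidences (just as in Gerstenhaber's classical theorem). Once $Q$ has been produced, anisotropy of $q$ follows automatically from irreducibility of $H$: a $q$-isotropic vector $x$ would satisfy $x \in P(x^\perp)$, and by the reasoning used in the easy direction run in reverse, would generate a proper non-zero $H$-invariant subspace, contradicting irreducibility. This closes the classification.
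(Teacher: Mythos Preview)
Your treatment of the easy direction is fine. The hard direction, however, rests on a premise that is demonstrably false: you insist on realizing $H$ as the orthogonal Lie algebra of an anisotropic quadratic form, i.e.\ on producing a \emph{symmetric} $Q$ with $H=Q^{-1}\Mata_n(\K)$. But the matrix $P$ in a decomposition $H=P\Mata_n(\K)$ is determined by $H$ up to a nonzero scalar (this is the paper's Lemma~\ref{centralizer}), so a symmetric representative exists only when $P$ itself is symmetric. Already your own $n=2$ case fails: take $\K=\F_3$ and the irreducible polynomial $t^2-t-1$, so $M=\begin{bmatrix}0&1\\1&1\end{bmatrix}$ spans $H$. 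Writing $H=P\Mata_2(\K)$ forces $P=\lambda\begin{bmatrix}1&0\\1&1\end{bmatrix}$, which is never symmetric. Equivalently, with the symmetric Gram matrix $Q=\begin{bmatrix}-b&-a/2\\-a/2&1\end{bmatrix}$ of your $q$, one computes that $QM$ is skew-symmetric only when $a=0$; your ``short check'' does not check out. In characteristic $2$ the situation is worse still, since skew-symmetric and alternate diverge and $\mathfrak o(q)$ is not $Q^{-1}\Mata_n(\K)$ to begin with.

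So the target cannot be ``find a symmetric $Q$''; one must allow arbitrary non-isotropic $P$, and the Lie-algebra framing evaporates. The paper's actual argument proceeds quite differently: it is an induction in which one first locates a basis vector $e_{n+1}$ such that no matrix of $V$ has column space $\K e_{n+1}$ (a nontrivial lemma), then introduces auxiliary maximal trivial-spectrum subspaces $V_{ul}$, $V_m$, $V_{lr}$ extracted from block decompositions of $V$, applies the induction hypothesis to these, and painstakingly analyzes four explicit families of matrices spanning $V$ to decide whether $V$ is reducible or equivalent to $\Mata_{n+1}(\K)$. Your inductive sketch (``pick $M_0$ generically, study pencils, extract constraints'') does not engage with any of these mechanisms and, given that its stated goal is unattainable, cannot be rescued by filling in details. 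A final minor point: once one has $H=P\Mata_n(\K)$, non-isotropy of $P$ follows directly from the \emph{trivial spectrum} of $H$ (Lemma~\ref{anisotropicequivalence}), not from irreducibility as you claim at the end.
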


\begin{theo}[Classification theorem for maximal linear subspaces with a trivial spectrum]\label{classtrivialspectrum}
Assume that $\# \K \geq 3$.
Let $V$ be a maximal linear subspace of $\Mat_n(\K)$ with a trivial spectrum.
Then there is a list
$(P_1,\dots,P_p)\in \GL_{n_1}(\K) \times \cdots \times \GL_{n_p}(\K)$ of non-isotropic matrices such that
$$V \simeq P_1\Mata_{n_1}(\K) \vee \cdots \vee P_p\Mata_{n_p}(\K).$$
The integer $p$ is uniquely determined by $V$ and, for every $k \in \lcro 1,p\rcro$, the matrix $P_k$ is uniquely determined
by $V$ up to congruence and multiplication by a non-zero scalar. Moreover, given
another list $(Q_1,\dots,Q_p) \in \GL_{n_1}(\K) \times \cdots \times \GL_{n_p}(\K)$,
if $Q_k$ is congruent to a scalar multiple of $P_k$ for each $k \in \lcro 1,p\rcro$, then
$$V \simeq Q_1\Mata_{n_1}(\K) \vee \cdots \vee Q_p\Mata_{n_p}(\K).$$
\end{theo}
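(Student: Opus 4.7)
The plan is to derive existence of the decomposition from Theorem~\ref{irreducible} combined with the reducibility proposition, and to handle the sufficiency and uniqueness statements via set-theoretic manipulations hinging on the identity $R^T \Mata_n(\K) R = \Mata_n(\K)$ valid for every $R \in \GL_n(\K)$ (a direct consequence of the characterization of alternate matrices by the vanishing of their associated quadratic form). Existence follows immediately: applying the reducibility proposition yields $V \simeq V_1 \vee \cdots \vee V_p$ with each $V_k$ irreducible maximal, and Theorem~\ref{irreducible} gives $V_k = P_k\,\Mata_{n_k}(\K)$ for some non-isotropic $P_k \in \GL_{n_k}(\K)$. For the sufficiency assertion at the end of the statement, I would check that if $Q = \lambda R P R^T$ with $\lambda \in \K \setminus \{0\}$ and $R \in \GL_n(\K)$, then $R^{-1}\bigl(Q\,\Mata_n(\K)\bigr)R = \lambda P\,\bigl(R^T \Mata_n(\K) R\bigr) = P\,\Mata_n(\K)$, and conjugating block by block by a block-diagonal matrix extends this to the full decomposition.

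For the \emph{uniqueness of the block structure}, the key is to extract the sequence $(n_k, V_k)$ from the lattice of $V$-invariant subspaces of $\K^n$. Fixing a decomposition $V = V_1 \vee \cdots \vee V_p$ with each $V_k$ irreducible, I would prove by induction on $p$ that the $V$-invariant subspaces of $\K^n$ are exactly the canonical terms $F_k := \K^{n_1+\cdots+n_k} \times \{0\}$ for $k = 0, \ldots, p$. Given such an $F$, projection onto the last $n_p$ coordinates is $V_p$-invariant (hence $\{0\}$ or the full block by irreducibility), and the intersection $F \cap F_{p-1}$ falls under the inductive hypothesis applied to $V_1 \vee \cdots \vee V_{p-1}$. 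The only delicate case is that of a ``graph-type'' $F$ transversal to $F_{p-1}$, which I would rule out by exploiting the full freedom of the rectangular off-diagonal block $\Mat_{n_1+\cdots+n_{p-1},\,n_p}(\K)$ inside $V$: testing invariance on a pure off-diagonal matrix immediately forces such an $F$ to be trivial. Consequently, the filtration $F_0 \subsetneq \cdots \subsetneq F_p$ is canonical, so the integers $n_k$ and the similarity classes of the $V_k$ acting on the quotients $F_k/F_{k-1}$ are invariants of $V$.

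The main obstacle is the \emph{uniqueness within an irreducible class}: showing that $P\,\Mata_n(\K) \simeq Q\,\Mata_n(\K)$ with $P, Q$ non-isotropic forces $Q$ to be congruent to a non-zero scalar multiple of $P$. Writing $Q\,\Mata_n(\K) = R P\,\Mata_n(\K) R^{-1}$ and using the consequence $\Mata_n(\K) R^{-1} = R^T\,\Mata_n(\K)$ of the identity above reduces the problem to showing that every $L \in \GL_n(\K)$ with $L\,\Mata_n(\K) = \Mata_n(\K)$ is a non-zero scalar multiple of $I_n$. Such an $L$ satisfies $LA = AL^T$ for all alternate $A$; testing this relation against the basis $(E_{ij} - E_{ji})_{i<j}$ of $\Mata_n(\K)$ first forces the off-diagonal entries of $L$ to vanish and then equates its diagonal entries, giving $L = \lambda I_n$. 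Tracing back, $Q = \lambda^{-1} R P R^T$, so $Q$ is indeed congruent to a non-zero scalar multiple of $P$, completing the proof.
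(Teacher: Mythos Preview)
Your proposal is correct. Existence and the sufficiency direction match the paper's treatment (Proposition~2 together with Theorem~\ref{irreducible} for existence; your block-diagonal conjugation is exactly Proposition~\ref{basicuniqueness}). The uniqueness argument, however, takes a genuinely different route. The paper (Proposition~\ref{uniquenessproposition1}) recovers the canonical flag $(F_k)$ via the numerical invariant $X \mapsto \dim(VX)$: it computes $\{\dim(VX) : X \in \K^n\} = \{0,\, n_1-1,\, n_1+n_2-1,\, \ldots,\, n-1\}$ using Lemma~\ref{action1}, then shows $F_k = \{X : \dim(VX) \le n_1+\cdots+n_k-1\}$, which forces any conjugating matrix to be block upper-triangular. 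You instead determine the entire $V$-invariant subspace lattice by induction on $p$, ruling out ``graph-type'' subspaces by testing invariance against pure off-diagonal matrices $\begin{bmatrix} 0 & B \\ 0 & 0\end{bmatrix}$. This is more elementary in that it avoids the orbit-dimension computation altogether. The paper's invariant $\dim(VX)$ does earn its keep later (it is reused in the proof of Claim~\ref{compatclaim} in Section~\ref{classificationsection}), so there is some economy in introducing it early; but your argument stands on its own and is arguably more transparent for this particular statement.

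For the centralizer step ($L\,\Mata_n(\K) = \Mata_n(\K) \Rightarrow L \in \K^\times I_n$), the paper argues geometrically (Lemma~\ref{centralizer}): $L$ must stabilize every hyperplane $\{X\}^\perp$, so $L^T$ fixes every line. Your computational approach via the relation $LA = AL^T$ tested on $E_{ij}-E_{ji}$ is also valid; just note that for $n=2$ this relation alone only yields $L = \begin{bmatrix} a & b \\ -b & a \end{bmatrix}$, and you need the vanishing-diagonal condition on $LA$ (part of ``$LA$ alternate'' but not a consequence of $LA = AL^T$) to force $b=0$. Worth one extra sentence.
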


If $\K$ is quadratically closed, it follows that there is no irreducible maximal linear subspace of $\Mat_n(\K)$
with a trivial spectrum for $n \geq 2$. If $\K$ is finite (with at least three elements), then every $3$-dimensional quadratic form over $\K$ is isotropic,
hence $\Mat_n(\K)$ contains no irreducible maximal linear subspace with a trivial spectrum for $n \geq 3$.
We deduce the following corollaries:

\begin{cor}
Let $\K$ be a quadratically closed field. Then $\NT_n(\K)$ is, up to similarity, the sole
maximal linear subspace of $\Mat_n(\K)$ with a trivial spectrum.
\end{cor}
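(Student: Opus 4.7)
The plan is to deduce this corollary almost immediately from Theorem \ref{classtrivialspectrum} together with the remark, already made in the paper, that a quadratically closed field admits no non-isotropic matrix of size $\geq 2$.

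First, I would apply the classification theorem to the given maximal linear subspace $V$ with trivial spectrum: there exist integers $n_1,\dots,n_p$ summing to $n$ and non-isotropic matrices $P_k \in \GL_{n_k}(\K)$ such that
\[
V \simeq P_1\Mata_{n_1}(\K) \vee \cdots \vee P_p\Mata_{n_p}(\K).
\]
Next, I would show that each $n_k$ must equal $1$. Indeed, if $n_k \geq 2$, then $P_k$ being non-isotropic would mean that the quadratic form $X \mapsto X^T P_k X$ on $\K^{n_k}$ has no non-trivial zero. But when $\K$ is quadratically closed, every non-zero quadratic form in at least two variables is isotropic: one may reduce to the case of a binary form $aX^2+bXY+cY^2$ (by restricting to a suitable plane), and since $\K$ contains a square root of any element of $\K$, such a form has a non-trivial zero (extracting a root of the discriminant in the non-degenerate case, and using a coordinate axis in the degenerate case). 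This contradicts the non-isotropy of $P_k$.

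With each $n_k=1$, we have $\Mata_1(\K)=\{0\}$, so each block $P_k\Mata_{n_k}(\K)$ is the zero subspace of $\Mat_1(\K)$. Therefore $p=n$ and
\[
V \simeq \underbrace{\{0\} \vee \cdots \vee \{0\}}_{n \text{ times}} = \NT_n(\K),
\]
which gives existence. Uniqueness up to similarity follows from the uniqueness clause in Theorem \ref{classtrivialspectrum}, but is in any case transparent here because the only choice left is trivial.

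The only real content is the elementary fact that a quadratically closed field supports no non-isotropic quadratic form in $\geq 2$ variables; once granted, the corollary is an immediate specialization of the classification theorem, and there is no serious obstacle.
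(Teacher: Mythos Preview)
Your argument is correct and is exactly the deduction the paper intends: the corollary is stated immediately after the observation that a quadratically closed field admits no non-isotropic quadratic form in dimension $\geq 2$, so each $n_k$ in Theorem~\ref{classtrivialspectrum} is forced to equal $1$ and $V \simeq \NT_n(\K)$. The only point you leave implicit is that a quadratically closed field automatically has at least three elements (so Theorem~\ref{classtrivialspectrum} applies), and your discriminant argument for binary forms tacitly assumes characteristic $\neq 2$; both are harmless since a quadratically closed field (in the sense that every quadratic polynomial has a root) is infinite and the binary case follows directly by specializing one variable.
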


\begin{cor}
Let $\K$ be a finite field with at least three elements.
Let $V$ be a maximal linear subspace of $\Mat_n(\K)$ with a trivial spectrum. \\
Then there are matrices $M_1,\dots,M_p$, either equal to $0 \in \Mat_1(\K)$ or belonging to $\Mat_2(\K)$ with no
eigenvalue in $\K$, such that
$$V \simeq \K\,M_1 \vee \cdots \vee \K\,M_p.$$
Each $M_k$ is then uniquely determined by $V$ up to similarity and multiplication by a non-zero scalar.
\end{cor}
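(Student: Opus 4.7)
The plan is to derive this corollary directly from Theorem \ref{classtrivialspectrum} by showing that over a finite field with at least three elements, the only non-isotropic matrices exist in sizes $1$ and $2$, and then translating the classification in those small cases into the language of a single $1\times 1$ zero block or a $2\times 2$ matrix with no rational eigenvalue.

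First I would invoke the classification theorem to write $V \simeq P_1\Mata_{n_1}(\K) \vee \cdots \vee P_p\Mata_{n_p}(\K)$ for some non-isotropic matrices $P_k \in \GL_{n_k}(\K)$. The classical Chevalley--Warning argument (already noted in the paragraph following Theorem \ref{classtrivialspectrum}) shows that every quadratic form in $\geq 3$ variables over a finite field is isotropic, so each $n_k$ is $1$ or $2$. For $n_k=1$ one has $\Mata_1(\K)=\{0\}$, so $P_k\Mata_{n_k}(\K)=\{0\}=\K\,M_k$ with $M_k=0\in\Mat_1(\K)$. For $n_k=2$, writing $J:=\begin{bmatrix} 0 & 1 \\ -1 & 0\end{bmatrix}$, we have $\Mata_2(\K)=\K J$, hence $P_k\Mata_2(\K)=\K\,M_k$ where $M_k:=P_kJ$. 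Because $P_k\Mata_2(\K)$ has a trivial spectrum, the matrix $M_k$ has no eigenvalue in $\K$; equivalently, this follows from the observation at the end of Section \ref{1.3} (if $P_kJX=X$ then $X^TP_k^{-1}X=0$, forcing $X=0$ since $P_k^{-1}$ is non-isotropic).

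For the uniqueness part, the key identity is that for every $Q \in \GL_2(\K)$,
\[
Q^T J = \det(Q)\,J Q^{-1},
\]
which is checked by a direct $2\times 2$ computation. Hence if $P_k' = \lambda\, Q P_k Q^T$ for some $\lambda \in \K^\times$ and $Q\in\GL_2(\K)$, then
\[
M_k' = P_k'J = \lambda\,QP_kQ^TJ = \lambda \det(Q)\,Q\,M_k\,Q^{-1},
\]
so $M_k'$ is similar to a scalar multiple of $M_k$; conversely, given such a similarity relation between $M_k$ and $M_k'$, the same identity lets us recover a congruence-plus-scalar relation between $P_k$ and $P_k'$. Thus the equivalence relation on non-isotropic $2\times 2$ matrices ``congruent up to a non-zero scalar factor'' corresponds bijectively to the equivalence relation on $2\times 2$ matrices without eigenvalue in $\K$ given by ``similar up to a non-zero scalar factor''. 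Combining this with the uniqueness statement in Theorem \ref{classtrivialspectrum} (and with the trivial fact that the $n_k=1$ blocks are unambiguously the zero $1\times 1$ matrix) yields both the existence and the uniqueness claimed in the corollary.

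The only mildly delicate point is this final translation between congruence classes of $P_k$ and similarity classes of $M_k=P_kJ$, which is entirely $2\times 2$ linear algebra; everything else reduces to quoting the classification theorem and the standard isotropy result over finite fields.
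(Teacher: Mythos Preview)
Your argument is correct and follows exactly the route the paper intends: the paper does not write out a proof of this corollary, but simply remarks just before stating it that over a finite field every quadratic form in at least three variables is isotropic, so that Theorem \ref{classtrivialspectrum} forces each $n_k\in\{1,2\}$. You have supplied precisely the details the paper omits, in particular the $2\times 2$ identity $Q^TJ=\det(Q)\,JQ^{-1}$ that converts ``$P_k$ determined up to congruence and scalar'' into ``$M_k=P_kJ$ determined up to similarity and scalar''; this is the right way to handle the uniqueness, and your computation in both directions is sound.
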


We may finally state the structure theorem for affine subspaces of non-singular matrices.

\begin{theo}[Classification theorem for large affine subspaces of non-singular matrices]\label{classaffine}
Assume that $\# \K \geq 3$. Let $\calV$ be a $\binom{n}{2}$-dimensional affine subspace of $\Mat_n(\K)$
included in $\GL_n(\K)$. Then there is a list
$(P_1,\dots,P_p)\in \GL_{n_1}(\K) \times \cdots \times \GL_{n_p}(\K)$ of non-isotropic matrices such that
$n=n_1+\cdots+n_p$ and
$$\calV \sim I_n+\bigl(P_1\Mata_{n_1}(\K)\vee \cdots \vee P_p\Mata_{n_p}(\K)\bigr).$$
The integer $p$ is uniquely determined by $\calV$ and, for $1 \leq k \leq p$,
the similarity class of the non-isotropic quadratic form $X \mapsto X^TP_kX$
is uniquely determined by $\calV$.
Moreover, given another list $(Q_1,\dots,Q_p) \in \GL_{n_1}(\K) \times \cdots \times \GL_{n_p}(\K)$,
if $X \mapsto X^TQ_kX$ is similar to $X \mapsto X^TP_kX$ for each $k \in \lcro 1,p\rcro$, then
$$\calV \sim I_n+\bigl(Q_1\Mata_{n_1}(\K)\vee \cdots \vee Q_p\Mata_{n_p}(\K)\bigr).$$
\end{theo}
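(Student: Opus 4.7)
I would first fix any $M_0 \in \calV$; then $M_0^{-1}\calV$ is an affine subspace of $\GL_n(\K)$ of dimension $\binom{n}{2}$ containing $I_n$, so $M_0^{-1}\calV = I_n + H_0$ with $H_0 := M_0^{-1}(\calV - M_0)$. The computation preceding Theorem~\ref{upperbound} shows $H_0$ has a trivial spectrum, hence is maximal. Theorem~\ref{classtrivialspectrum} then yields non-isotropic matrices $P_k \in \GL_{n_k}(\K)$ (with $n = n_1 + \cdots + n_p$) and $R \in \GL_n(\K)$ such that $R^{-1}H_0R = P_1\Mata_{n_1}(\K) \vee \cdots \vee P_p\Mata_{n_p}(\K)$; since $R^{-1}(I_n + H_0)R = I_n + R^{-1}H_0R$, we obtain $\calV \sim I_n + \bigl(P_1\Mata_{n_1}(\K) \vee \cdots \vee P_p\Mata_{n_p}(\K)\bigr)$.

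\textbf{Sufficient direction.} Similarity of the $k$-th forms lets me write $Q_k = \lambda_k R_k^T P_k R_k + A_k^{(0)}$ for some $\lambda_k \in \K^\times$, $R_k \in \GL_{n_k}(\K)$, and $A_k^{(0)} \in \Mata_{n_k}(\K)$. Block by block, I would handle this in two steps. First, using $R_k\Mata_{n_k}(\K)R_k^T = \Mata_{n_k}(\K)$ (immediate) and $\lambda_k\Mata_{n_k}(\K) = \Mata_{n_k}(\K)$, one has $(\lambda_k R_k^T P_k R_k)\Mata_{n_k}(\K) = R_k^T(P_k\Mata_{n_k}(\K))R_k^{-T}$, a similarity via $R_k^T$ that lifts to an affine equivalence of $I_{n_k} + \cdot$. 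Second, to absorb the alternate perturbation $A_k^{(0)}$: setting $P_* := \lambda_k R_k^T P_k R_k$ and $Q := P_* + A_k^{(0)}$, I would check that right-multiplication by $P_*^T Q^{-T}$ provides an equivalence $I_{n_k} + P_*\Mata_{n_k}(\K) \sim I_{n_k} + Q\Mata_{n_k}(\K)$: indeed $P_*^T Q^{-T} - I_{n_k} = A_k^{(0)} Q^{-T}$ with $Q^{-1}A_k^{(0)}Q^{-T}$ skew-symmetric (hence alternate), and for every $B \in \Mata_{n_k}(\K)$ the product $Q^{-1}(P_* B P_*^T Q^{-T})$ is also skew-symmetric, so $P_*\Mata_{n_k}(\K)\cdot P_*^T Q^{-T} = Q\Mata_{n_k}(\K)$. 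Block-diagonal assembly of these multipliers produces the full equivalence.

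\textbf{Uniqueness (the main obstacle).} Theorem~\ref{classtrivialspectrum} gives a uniquely determined invariant (the congruence-plus-scalar class of each $P_k$) for $H_0$, but this is too fine an invariant for $\calV$: changing the base point from $M_0$ to $M_1 \in \calV$ replaces $H_0$ by $H_1 = (I_n + h_0)^{-1}H_0$ for some $h_0 \in H_0$, and in general this transformation alters the congruence-plus-scalar class of each block while only preserving the similarity class of the associated quadratic form. The crucial computation is the following: for $M = I_n + PA$ with $P$ non-isotropic and $A \in \Mata_n(\K)$, substituting $X = M^TW$ gives
\[
X^T M^{-1}P X \;=\; W^T P M^T W \;=\; W^T P W \,+\, W^T(PA^T P^T)W \;=\; (M^{-T}X)^T P(M^{-T}X),
\]
because $W^T(PA^TP^T)W = (P^TW)^T A^T (P^TW) = 0$ since $A^T$, like $A$, is alternate (this holds in any characteristic). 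Hence the form attached to $M^{-1}P$ is $X \mapsto X^T P X$ precomposed with $M^{-T}$, so the two forms are equivalent and in particular lie in the same similarity class. Applying this diagonal-block by diagonal-block in $(I_n + h_0)^{-1}$ --- whose $k$-th diagonal block is $(I_{n_k} + P_k A_k)^{-1}$ for some $A_k \in \Mata_{n_k}(\K)$ --- shows that $p$, the dimensions $n_k$, and the similarity classes of the forms $X \mapsto X^T P_k X$ all survive the base-point change. They are therefore intrinsic invariants of $\calV$. Given any equivalence $\calV \sim I_n + \bigl(\vee_k P_k\Mata_{n_k}(\K)\bigr)$, choosing $M_0$ to be the preimage of $I_n$ under this equivalence identifies the invariants of $\vee_k P_k\Mata_{n_k}(\K)$ (from Theorem~\ref{classtrivialspectrum}) with the intrinsic invariants of $\calV$; consequently any two such equivalences yield matching invariants, which completes the uniqueness.
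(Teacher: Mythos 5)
Your proof is correct, and while it rests on the same algebraic core as the paper's argument, the uniqueness part is organized along a genuinely different route. The paper handles uniqueness through Proposition \ref{uniquenessproposition2}: it takes an arbitrary equivalence $R\calV S=\calW$ between two normal forms, shows that $RS$, then $S$, then $R$ must be block upper triangular, and thereby reduces to the one-block statement of Proposition \ref{lemmeequivalence}. You instead make the invariants intrinsic by re-basing $\calV$ at an arbitrary point $M_0$ and checking that a change of base point, which replaces $H_0$ by $(I_n+h_0)^{-1}H_0$, turns each diagonal block $P_k$ into $(I_{n_k}+P_kA_k)^{-1}P_k$ whose quadratic form is equivalent to that of $P_k$; your displayed computation is exactly the ``alternate part kills the cross term'' identity that also drives Proposition \ref{lemmeequivalence}. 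What this buys you is that the block-triangularity analysis is done only once, inside the uniqueness statement of Theorem \ref{classtrivialspectrum}, which you then quote as a black box; the small price is that you should spell out why $(I_n+h_0)^{-1}\bigl(P_1\Mata_{n_1}(\K)\vee\cdots\vee P_p\Mata_{n_p}(\K)\bigr)$ equals $(I_{n_1}+P_1A_1)^{-1}P_1\Mata_{n_1}(\K)\vee\cdots\vee(I_{n_p}+P_pA_p)^{-1}P_p\Mata_{n_p}(\K)$ (one inclusion is clear from the block shapes, and left multiplication by an invertible matrix preserves dimension), and that you may first conjugate so that $H_0$ is actually equal to its normal form. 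Your sufficiency construction (a conjugation by $R_k^T$ followed by right multiplication by $P_*^TQ^{-T}$) is the paper's pair $(R,S)$ with $S=R(I_n+PA)$ in different clothes. One wording caveat: in characteristic $2$, ``skew-symmetric'' does not imply ``alternate,'' so the justification for $Q^{-1}A_k^{(0)}Q^{-T}$ and $Q^{-1}P_*BP_*^TQ^{-T}$ being alternate should be that they are \emph{congruent} to the alternate matrices $A_k^{(0)}$ and $B$ (which is how the paper phrases it in Proposition \ref{lemmeequivalence}); the conclusion is unaffected.
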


Note that the existence of $(P_1,\dots,P_p)$ is a trivial consequence of Theorem \ref{classtrivialspectrum} using the considerations
of Paragraph \ref{pureintro}.

As a consequence, $\binom{n}{2}$-dimensional affine subspaces of $\Mat_n(\K)$ included in $\GL_n(\K)$
are classified, up to equivalence, by the lists of the form $([\varphi_1],\dots,[\varphi_p])$
where the $\varphi_k$'s are finite-dimensional non-isotropic quadratic forms over $\K$, the $[\varphi_k]$'s are their similarity classes, and
$\underset{k=1}{\overset{p}{\sum}} \dim \varphi_k=n$.
For the field of real numbers, this has the following striking corollary:

\begin{cor}
Let $\calV$ be an affine subspace of $\Mat_n(\R)$
included in $\GL_n(\R)$ with dimension $\binom{n}{2}$.
Then there is a unique list $(n_1,\dots,n_p)$ of positive integers such that $n=n_1+\cdots+n_p$ and
$$\calV \sim I_n+\bigl(\Mata_{n_1}(\R) \vee \cdots \vee \Mata_{n_p}(\R)\bigr).$$
\end{cor}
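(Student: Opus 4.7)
The plan is to deduce this corollary directly from Theorem \ref{classaffine} by using the fact that, over $\R$, the similarity classes of non-isotropic quadratic forms are parametrized by dimension alone. So the entire content reduces to a classical application of Sylvester's law of inertia.

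First, I would apply Theorem \ref{classaffine} to $\calV$ to obtain a list $(P_1,\dots,P_p)\in\GL_{n_1}(\R)\times\cdots\times\GL_{n_p}(\R)$ of non-isotropic matrices with $n_1+\cdots+n_p=n$, such that
\[
\calV \sim I_n + \bigl(P_1\Mata_{n_1}(\R) \vee \cdots \vee P_p\Mata_{n_p}(\R)\bigr).
\]
Next, I would invoke Sylvester's law of inertia: a quadratic form on $\R^m$ is non-isotropic if and only if it is definite (positive or negative); any two positive definite forms of the same dimension are equivalent, and multiplication by $-1$ turns a negative definite form into a positive definite one. Hence the similarity class of a non-isotropic quadratic form on $\R^m$ depends only on $m$. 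In particular, $X\mapsto X^TP_kX$ is similar to $X\mapsto X^TI_{n_k}X$ for every $k$, and $I_{n_k}$ is non-isotropic. By the last part of Theorem \ref{classaffine}, we may replace each $P_k$ by $I_{n_k}$; since $I_{n_k}\Mata_{n_k}(\R)=\Mata_{n_k}(\R)$, this yields the existence statement
\[
\calV \sim I_n+\bigl(\Mata_{n_1}(\R)\vee\cdots\vee\Mata_{n_p}(\R)\bigr).
\]

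Finally, for uniqueness, I would again appeal to Theorem \ref{classaffine}: the integer $p$ and the similarity class of each $X\mapsto X^TP_kX$ are uniquely determined by $\calV$. Over $\R$, that similarity class is encoded by the single integer $n_k$, so the list $(n_1,\dots,n_p)$ itself is uniquely determined. There is no real obstacle to overcome here; the entire difficulty of the result is already contained in Theorem \ref{classaffine}, and the role of $\R$ is only to collapse the classifying invariant of non-isotropic forms down to their dimension.
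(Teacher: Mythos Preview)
Your proposal is correct and matches the paper's intended derivation: the paper states this corollary immediately after Theorem \ref{classaffine} without proof, and your argument---apply Theorem \ref{classaffine}, then use Sylvester's law of inertia to see that over $\R$ a non-isotropic form is definite and hence determined up to similarity by its dimension---is exactly the deduction the paper has in mind.
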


\subsection{Totally intransitive action of a space of matrices}

Proving the previous theorems will require an extensive use of the following concept
and of the subsequent remark:

\begin{Def}
Let $V$ be a linear subspace of $\Mat_n(\K)$.
For $X \in \K^n$, set
$$VX:=\bigl\{MX \mid X \in V\bigr\}.$$
Note that $VX$ is always a linear subspace of $\K^n$. \\
We say that $V$ acts \textbf{totally intransitively} on $\K^n$ if
$VX \neq \K^n$ for every $X \in \K^n$, which is equivalent to having
$\dim(VX)<n$ for every $X \in \K^n$.
\end{Def}

\begin{Rem}
If $V$ has a trivial spectrum, then $X \not\in VX$ for every $X \in \K^n \setminus \{0\}$, hence $V$ acts totally intransitively on $\K^n$. \\
Moreover $V^T:=\bigl\{M^T \mid M \in V\bigr\}$ also has a trivial spectrum, hence
$$\forall X \in \K^n, \quad \dim(VX)<n \quad \text{and} \quad \dim(V^TX)<n.$$
\end{Rem}

\subsection{Structure of the paper}

We will start (Section \ref{espacesPA}) with general considerations on the spaces of the type $P\Mata_n(\K)$ with $P \in \GL_n(\K)$.
Using some of the obtained results, we will then prove the uniqueness statements
in Theorems \ref{classtrivialspectrum} and \ref{classaffine} (Section \ref{sectionunicite}).
The proof of Theorem \ref{irreducible} will be carried out in Section \ref{classificationsection} by induction on $n$, starting
from $n=2$ and using a recent lemma that was proved in \cite{dSPlargerank}: this is, by far, the most technical part of the paper.
In Section \ref{nilpotent}, we will easily derive Gerstenhaber's theorem from Theorem \ref{classtrivialspectrum} in the case $\# \K \geq 3$.
In Section \ref{F2}, we will show that Theorem \ref{irreducible} fails for $n=3$ and $\K \simeq \F_2$.
The case $\# \K=2$ remains a very exciting challenge that we will not undertake here.

\section{Basic properties of the spaces $P\Mata_n(\K)$}\label{espacesPA}

We consider first $P\Mata_n(\K)$ for an arbitrary $P \in \GL_n(\K)$.
To start with, note that, for every $Q \in \GL_n(\K)$, one has
$$P\Mata_n(\K)\,Q=P(Q^T)^{-1} Q^T\Mata_n(\K)\,Q=\bigl(P(Q^T)^{-1}\bigr)\Mata_n(\K)$$
which immediately shows that $\{P\Mata_n(\K) \mid P \in \GL_n(\K)\}$
is an equivalence class (for the equivalence of spaces of matrices).

In order to move forward, we need some basic properties of $\Mata_n(\K)$:
for this, we equip $\K^n$ with the non-degenerate symmetric bilinear form
$(X,Y) \mapsto X^TY$.

\begin{lemme}\label{action1}
For any $X \in \K^n \setminus \{0\}$, one has
$$\Mata_n(\K)X=\{X\}^\bot$$
and in particular $\dim(\Mata_n(\K)X)=n-1$.
\end{lemme}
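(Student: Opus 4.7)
The plan is to establish the easy inclusion first, then reduce the non-trivial direction to the case $X=e_1$ by a congruence transformation, and finally verify that case by direct construction.

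The inclusion $\Mata_n(\K)X \subset \{X\}^\bot$ is immediate: for any $A \in \Mata_n(\K)$ and $X \in \K^n$, the defining property $X^TAX=0$ yields $X^T(AX)=0$, i.e.\ $AX \in \{X\}^\bot$.

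For the converse inclusion, I would first handle $X=e_1$. Given an arbitrary vector $Y=(0,v_2,\dots,v_n)^T \in \{e_1\}^\bot$, one can explicitly produce an alternate matrix whose first column equals $Y$: place $v_2,\dots,v_n$ in the remaining entries of the first column, place their negatives in the corresponding entries of the first row, and set every other entry to zero. This works uniformly in every characteristic (in characteristic $2$ one simply notes that a symmetric matrix with zero diagonal is alternate). Hence $\{e_1\}^\bot \subset \Mata_n(\K)e_1$, and this case is settled.

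For a general $X \in \K^n \setminus \{0\}$, I would pick $Q \in \GL_n(\K)$ with $Qe_1=X$ and exploit the fact that $B \mapsto Q^{-T}BQ^{-1}$ is a linear automorphism of $\Mata_n(\K)$, since congruence preserves alternateness. Writing $A=Q^{-T}BQ^{-1}$ as $B$ ranges over $\Mata_n(\K)$, we get $AX = Q^{-T}B(Q^{-1}X) = Q^{-T}Be_1$, so $\Mata_n(\K)X = Q^{-T}\,\Mata_n(\K)e_1 = Q^{-T}\{e_1\}^\bot$. A short computation with the standard bilinear form, namely $Z \in Q^{-T}\{e_1\}^\bot \Leftrightarrow e_1^T Q^T Z = 0 \Leftrightarrow (Qe_1)^T Z = 0$, shows that $Q^{-T}\{e_1\}^\bot = \{Qe_1\}^\bot = \{X\}^\bot$, finishing the argument; the dimension equality $\dim(\Mata_n(\K)X)=n-1$ then follows at once. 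I do not expect any serious obstacle here — the only point requiring care is the uniform handling of characteristic $2$ when constructing an alternate preimage in the case $X=e_1$.
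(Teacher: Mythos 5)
Your proof is correct and follows essentially the same route as the paper's: reduce to the case $X=e_1$ via a matrix $Q$ with $Qe_1=X$, using that congruence preserves alternateness, and conclude with the identity $Q^{-T}\{e_1\}^\bot=\{Qe_1\}^\bot$. You merely spell out the $e_1$ case (which the paper calls obvious) and the characteristic-$2$ point more explicitly.
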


\begin{proof}
This is obvious if $X$ is the first vector $e_1$ of the canonical basis of $\K^n$.
In the general case, we may find some $P \in \GL_n(\K)$ such that $Pe_1=X$, and note that
\begin{multline*}
\Mata_n(\K)X=(P^T)^{-1}P^T\Mata_n(\K)Pe_1 =(P^T)^{-1} \Mata_n(\K)e_1 \\
=(P^T)^{-1} \{e_1\}^\bot=\{Pe_1\}^\bot=\{X\}^\bot.
\end{multline*}
\end{proof}

We may now determine, amongst the spaces of the above form, those with a trivial spectrum:

\begin{lemme}\label{anisotropicequivalence}
Let $P \in \GL_n(\K)$. Then $P\Mata_n(\K)$ has a trivial spectrum if and only if
$P$ is non-isotropic.
\end{lemme}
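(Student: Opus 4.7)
The forward implication is already established in the paragraph preceding Theorem \ref{irreducible}: the computation there shows that if $P$ is non-isotropic (equivalently $P^{-1}$ is non-isotropic), then any equation $PAX = X$ with $A\in\Mata_n(\K)$ forces $X^T P^{-1}X = 0$ and hence $X=0$. So only the converse needs a separate argument, and I would prove its contrapositive.

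First I would record the elementary fact that $P$ is isotropic if and only if $P^{-1}$ is isotropic. Indeed, $X \mapsto X^T P X$ and $X \mapsto X^T P^T X$ are the same quadratic form, so isotropy of $P$ is the same as isotropy of $P^T$; and the substitution $X = PY$ yields
\[
Y^T P^T P^{-1} P Y = Y^T (P^T)^{-1} (P^T P^T)(P^{-1}) \cdots
\]
which I would clean up by the direct calculation $(PX)^T P^{-T}(PX) = X^T P X$, showing that $P$ and $P^{-T}$ are simultaneously isotropic; since $P^{-T}$ and $P^{-1}$ define the same quadratic form, the claim follows.

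Now I would start from the assumption that $P$ is isotropic. By the remark above, there exists $Y \in \K^n \setminus \{0\}$ with $Y^T P^{-1} Y = 0$. Set $Z := P^{-1} Y$. Then $Y^T Z = 0$, i.e.\ $Z \in \{Y\}^\bot$. By Lemma \ref{action1}, $\{Y\}^\bot = \Mata_n(\K)\,Y$, so one can pick $A \in \Mata_n(\K)$ with $AY = Z = P^{-1} Y$. Multiplying on the left by $P$ gives $PAY = Y$, which exhibits $PA \in P\Mata_n(\K)$ as a matrix with the non-zero invariant vector $Y$. Thus $P\Mata_n(\K)$ does not have a trivial spectrum, which is what we wanted.

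The proof is essentially a direct application of Lemma \ref{action1}, and no step looks hard; the only mildly delicate point is keeping track of the distinction between $P$ and $P^{-1}$ when invoking the isotropy hypothesis, which the small preliminary observation above takes care of.
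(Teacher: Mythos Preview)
Your proof is correct and follows essentially the same route as the paper: both argue the contrapositive, pass from isotropy of $P$ to isotropy of $P^{-1}$ (the paper uses $(P^T)^{-1}$, which is equivalent), and then invoke Lemma~\ref{action1} to produce $A\in\Mata_n(\K)$ with $PA\,X=X$ for some non-zero $X$. The only quibble is expository: the abandoned display in your preliminary observation should be replaced by the clean identity $(PX)^T P^{-T}(PX)=X^T P X$ that you state immediately after.
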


\begin{proof}
The ``if" part has already been dealt with in the beginning of Section \ref{1.3}. Assume that
$P$ is isotropic. Then obviously $(P^T)^{-1}$ is also isotropic, hence we find
a non-zero vector $X \in \K^n$ such that $X^T(P^T)^{-1}X=0$, i.e.\ $P^{-1}X \in \{X\}^\bot$.
Then Lemma \ref{action1} shows that $P^{-1}X=AX$ for some $A \in \Mata_n(\K)$ hence
$(PA)X=X$, which shows that $P\Mata_n(\K)$ does not have a trivial spectrum.
\end{proof}

\begin{prop}
Let $P \in \GL_n(\K)$ be a non-isotropic matrix. Then $P\Mata_n(\K)$ is an irreducible maximal subspace
with a trivial spectrum.
\end{prop}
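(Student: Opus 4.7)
The plan is to observe that two of the three claims are essentially free from what has already been established, so I would concentrate on irreducibility. Since $P \in \GL_n(\K)$, the map $A \mapsto PA$ is a linear bijection, giving $\dim P\Mata_n(\K) = \dim \Mata_n(\K) = \binom{n}{2}$. The triviality of the spectrum is Lemma~\ref{anisotropicequivalence}. Together these say $P\Mata_n(\K)$ is a maximal subspace with trivial spectrum. It remains to show that any nonzero subspace $F \subset \K^n$ stabilized by every matrix in $P\Mata_n(\K)$ must equal $\K^n$.

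Here my key tool will be Lemma~\ref{action1}, which computes $\Mata_n(\K) X = \{X\}^\bot$ for every nonzero $X$. Picking any nonzero $X \in F$, I get
$$P\{X\}^\bot = P\bigl(\Mata_n(\K)X\bigr) = \bigl(P\Mata_n(\K)\bigr)X \subset F,$$
or equivalently $\{X\}^\bot \subset P^{-1}F$. In particular $\dim P^{-1}F \geq n-1$, so either $P^{-1}F = \K^n$, in which case $F = \K^n$ and we are done, or else $\dim P^{-1}F = n-1$ and $P^{-1}F = \{X\}^\bot$.

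The remaining case is the one I need to rule out. The containment $P^{-1}X \in P^{-1}F = \{X\}^\bot$ then translates into
$$X^T P^{-1} X = 0,$$
meaning the vector $X$ is isotropic for the quadratic form $Y \mapsto Y^T P^{-1} Y$. But $P$ is non-isotropic, hence so is $P^{-1}$ (as noted just before Theorem~\ref{irreducible}), giving a contradiction with $X \neq 0$. So this case cannot occur, and $F = \K^n$ in all cases, finishing the proof of irreducibility.

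I do not expect any real obstacle: the whole argument is a direct exploitation of Lemma~\ref{action1} and the non-isotropy of $P^{-1}$. The only minor care is to dispose of both possible values of $\dim P^{-1}F$ (which are the only two allowed by the hyperplane containment), and to remember that non-isotropy is preserved under inversion.
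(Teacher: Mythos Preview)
Your proof is correct and, in fact, cleaner than the paper's. Both arguments begin the same way: a nonzero invariant subspace $F$ must contain $P\{X\}^\bot$ for any nonzero $X\in F$, forcing $\dim F\ge n-1$. The difference lies in how the remaining case $\dim F=n-1$ is eliminated. The paper passes to the transpose space $(P\Mata_n(\K))^T=\Mata_n(\K)P^T$, observes that $F^\bot$ is invariant for it, and repeats the dimension count to obtain $\dim F^\bot\ge n-1$; this forces $n=2$, $\dim F=1$, and then a separate ad hoc argument (nilpotency versus invertibility in $P\Mata_2(\K)$) finishes. You instead stay with $F$ itself and note that $X\in F$ gives $P^{-1}X\in P^{-1}F=\{X\}^\bot$, i.e.\ $X^TP^{-1}X=0$, contradicting the non-isotropy of $P^{-1}$ directly. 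Your route is shorter, avoids the transpose and the special handling of $n=2$, and makes the role of non-isotropy more transparent; the paper's route, on the other hand, illustrates the duality technique (passing to $V^T$) that is used repeatedly later in Section~\ref{classificationsection}.
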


\begin{proof}
It only remains to show that $P\Mata_n(\K)$ is irreducible.
We use a \emph{reductio ad absurdum} by assuming that it has a non-trivial stable subspace $F \subset \K^n$ with dimension $p \in \lcro 1,n-1\rcro$.
Then $F^\bot$ is stabilized by $(P\Mata_n(\K))^T=\Mata_n(\K)P^T$.
Choosing an arbitrary non-zero vector $X \in F$, we have $\dim(P\Mata_n(\K)X)=\dim\{X\}^\bot=n-1$ hence $p=n-1$. \\
However, choosing a non-zero vector $Y \in F^\bot$ yields $\dim(\Mata_n(\K)P^TY)=n-1$ hence $n-p=n-1$.
This yields $n=2$ and $p=1$, in which case every matrix of $P\Mata_n(\K)$ must be nilpotent
(since it has an eigenvector and $0$ is the sole possible eigenvalue in $\K$), contradicting the fact that
every non-zero matrix of $P\Mata_2(\K)$ is non-singular.
\end{proof}

We now investigate when two spaces of the form $P\Mata_n(\K)$ are similar. Here is our basic result:

\begin{lemme}\label{centralizer}
Let $P \in \GL_n(\K)$. Then $P\Mata_n(\K)=\Mata_n(\K)$ if and only if $P$ is a scalar multiple of the identity.
\end{lemme}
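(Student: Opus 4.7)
The ``if'' direction is immediate, since $\Mata_n(\K)$ is a linear subspace of $\Mat_n(\K)$ and scalar multiplication by a nonzero element preserves every linear subspace. The whole content is therefore the converse, and the plan is to bootstrap it from Lemma \ref{action1} by evaluating both sides at a vector.

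Assume $P\Mata_n(\K) = \Mata_n(\K)$. For any $X \in \K^n \setminus \{0\}$, one has $(P\Mata_n(\K))X = P(\Mata_n(\K)X)$ by definition, so applying the hypothesis and Lemma \ref{action1} yields
$$P\bigl(\{X\}^\bot\bigr) = \{X\}^\bot.$$
Since the bilinear form $(X,Y) \mapsto X^T Y$ is non-degenerate, every hyperplane of $\K^n$ is of the form $\{X\}^\bot$ for some nonzero $X$. Thus $P$ stabilizes every hyperplane of $\K^n$.

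To conclude, I would invoke the standard fact that a linear endomorphism of $\K^n$ which stabilizes every hyperplane must be a scalar multiple of the identity. One quick justification: every line in $\K^n$ is the intersection of finitely many hyperplanes, hence is also stabilized by $P$, so every nonzero vector is an eigenvector of $P$; comparing $Pv$, $Pw$ and $P(v+w)$ for any pair of linearly independent $v,w$ forces the associated eigenvalues to coincide, so $P = \lambda I_n$ for some $\lambda \in \K$. The cases $n=1$ (where $\Mata_1(\K) = \{0\}$ and any $P$ is a scalar) and $n=2$ (where hyperplanes are just lines) are covered by the same argument.

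There is no real obstacle in this proof: the substantive input is Lemma \ref{action1}, which has already been established, and the rest is a one-step reduction to the classical ``stabilizes every hyperplane implies scalar'' principle, valid over any field and in any characteristic.
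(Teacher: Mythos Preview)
Your proof is correct and follows essentially the same route as the paper's: both apply Lemma \ref{action1} to deduce that $P$ stabilizes every hyperplane $\{X\}^\bot$, and then conclude that $P$ is a scalar. The only cosmetic difference is that the paper passes to the transpose (noting that $P$ stabilizing $\{X\}^\bot$ is equivalent to $P^T$ stabilizing $\Vect(X)$), whereas you argue directly that a map stabilizing every hyperplane stabilizes every line; both finishes are standard and equivalent.
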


\begin{proof}
The ``if" part is trivial. Assume conversely that $P\Mata_n(\K)=\Mata_n(\K)$.
Let $X \in \K^n \setminus \{0\}$. Then $P\Mata_n(\K)X=\Mata_n(\K)X$ yields that $P$ stabilizes the hyperplane
$\{X\}^\bot$, hence $P^T$ stabilizes $\Vect(X)$. Since this holds for every non-zero $X \in \K^n$,
this shows that $P^T$ is a scalar multiple of the identity, hence $P$ also is.
\end{proof}

The following corollary will be our starting point for the uniqueness statement in Theorem \ref{classtrivialspectrum}:

\begin{prop}\label{basicuniqueness}
Let $(P,Q)\in \GL_n(\K)^2$. Then $P\Mata_n(\K) \simeq Q\Mata_n(\K)$
if and only if $P \approx \lambda Q$ for some $\lambda \in \K \setminus \{0\}$.
\end{prop}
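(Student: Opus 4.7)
The plan is to reduce both implications to two ingredients already at our disposal: Lemma~\ref{centralizer}, and the identity $\Mata_n(\K)\,R=(R^T)^{-1}\Mata_n(\K)$ for every $R\in \GL_n(\K)$, which was established at the very beginning of Section~\ref{espacesPA} from the obvious fact that $R^T\Mata_n(\K)R=\Mata_n(\K)$.

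For the ``if'' direction, I assume that $P=\lambda S Q S^T$ for some $\lambda \in \K\setminus\{0\}$ and $S\in \GL_n(\K)$. Since $\Mata_n(\K)$ is stable under scalar multiplication, I can discard $\lambda$ inside $P\Mata_n(\K)$, obtaining $P\Mata_n(\K)=SQS^T\Mata_n(\K)$. Then I apply $S^T\Mata_n(\K)=\Mata_n(\K)\,S^{-1}$ to conclude $P\Mata_n(\K)=S\bigl(Q\Mata_n(\K)\bigr)S^{-1}$, which is the required similarity.

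For the ``only if'' direction, I assume $P\Mata_n(\K)=S\bigl(Q\Mata_n(\K)\bigr)S^{-1}$ for some $S\in \GL_n(\K)$. Applying the same identity $\Mata_n(\K)\,S^{-1}=S^T\Mata_n(\K)$ on the right-hand side gives
\[
P\Mata_n(\K)=SQS^T\Mata_n(\K),
\]
hence $(SQS^T)^{-1}P\,\Mata_n(\K)=\Mata_n(\K)$. Lemma~\ref{centralizer} then forces $(SQS^T)^{-1}P=\lambda I_n$ for some $\lambda\in \K\setminus\{0\}$, and rearranging yields $P=S(\lambda Q)S^T$, i.e.\ $P\approx \lambda Q$.

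There is no real obstacle here: the statement is essentially a rewrite of Lemma~\ref{centralizer} once one systematically uses the commutation rule between left and right multiplication through a transpose-inverse. The only point that demands a little care is to keep track of where the scalar $\lambda$ sits in each computation, but since $\Mata_n(\K)$ is a linear subspace stable under scalar multiplication, it is harmless to move $\lambda$ in and out of the expression $P\Mata_n(\K)$.
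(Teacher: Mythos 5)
Your proof is correct and follows essentially the same route as the paper: both directions rest on the congruence-invariance of $\Mata_n(\K)$ (in the form $\Mata_n(\K)R=(R^T)^{-1}\Mata_n(\K)$) together with Lemma~\ref{centralizer} for the converse. The bookkeeping of the scalar $\lambda$ is handled correctly.
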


\begin{proof}
If $P=\lambda\,RQR^T$ for some $R \in \GL_n(\K)$ and some $\lambda \in \K \setminus \{0\}$, then
$$P\Mata_n(\K)=RQR^T\Mata_n(\K)=R(Q R^T\Mata_n(\K)R)R^{-1}
=R(Q\Mata_n(\K))R^{-1}.$$
Conversely, assume that $P\Mata_n(\K)=R(Q\Mata_n(\K))R^{-1}$ for some $R\in \GL_n(\K)$.
Then the above computation yields $(RQR^T)^{-1}P\Mata_n(\K)=\Mata_n(\K)$ hence Lemma \ref{centralizer} yields
a non-zero scalar $\lambda$ such that $(RQR^T)^{-1}P=\lambda I_n$. Therefore $P=R(\lambda\,Q)R^T$.
\end{proof}

\begin{Rem}[A crucial remark]\label{lowertriangremark}
Let $E$ be a finite dimensional vector space and $b$ a (possibly non-symmetric) bilinear form on $E$
such that $\forall x \in E \setminus \{0\}, \; b(x,x)\neq 0$.
Given a non-zero vector $x \in E$, the hyperplane $H:=\{y \in E : \; b(x,y)=0\}$
is then a complementary subspace of $\Vect(x)$ in $E$. By induction on the dimension of spaces, it follows that
there exists a basis $(f_1,\dots,f_n)$ of $E$ which is \emph{right-orthogonal} for $b$, i.e.\
$b(f_i,f_j)=0$ for every $(i,j)\in \lcro 1,n\rcro^2$ satisfying $i<j$. \\
For a non-isotropic matrix $P \in \GL_n(\K)$, this may be interpreted as follows:
$P$ is congruent to a lower-triangular matrix $T$, and hence $P\Mata_n(\K)$ is similar to $T\Mata_n(\K)$.
This remark will play a major part in our proof of Theorem \ref{irreducible}.
\end{Rem}

Now, given non-isotropic matrices $P$ and $Q$ of $\GL_n(\K)$, we may examine when
the two affine subspaces $I_n+P\Mata_n(\K)$ and $I_n+Q \Mata_n(\K)$ are equivalent.

\begin{prop}\label{lemmeequivalence}
Let $P$ and $Q$ be non-isotropic matrices of $\GL_n(\K)$. \\
Then $I_n+P\Mata_n(\K) \sim I_n+Q\Mata_n(\K)$ if and only if the quadratic forms $X \mapsto X^TPX$ and $X \mapsto X^TQX$ are similar.
\end{prop}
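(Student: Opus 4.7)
The plan is to convert the equivalence $I_n+P\Mata_n(\K) \sim I_n+Q\Mata_n(\K)$ into algebraic conditions on a pair $(R,S)\in \GL_n(\K)^2$ that implements it, and then exploit Lemma \ref{centralizer} together with the absorption identity $S^T\Mata_n(\K)S=\Mata_n(\K)$ (which is immediate since $(SX)^T A (SX)=0$ for every $A\in \Mata_n(\K)$ and every $X\in \K^n$).

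For the direct implication, I first unpack the equality $R(I_n+P\Mata_n(\K))S = I_n + Q\Mata_n(\K)$ into two separate conditions: equality of translation vector spaces, $RP\Mata_n(\K)S = Q\Mata_n(\K)$, and membership of the base point, $RS - I_n \in Q\Mata_n(\K)$. Applying the identity $A\Mata_n(\K)\,B = A(B^T)^{-1}\Mata_n(\K)$ recalled at the start of Section \ref{espacesPA} to the first condition and then invoking Lemma \ref{centralizer} yields a nonzero scalar $\mu$ with $RP = \mu QS^T$. Substituting this into the second condition transforms it into $\mu S^T P^{-1} S - Q^{-1} \in \Mata_n(\K)$, which says exactly that $\mu S^T P^{-1} S$ and $Q^{-1}$ define the same quadratic form; equivalently, $X\mapsto X^T Q^{-1}X$ is similar to $X\mapsto X^T P^{-1} X$.

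The conceptual kernel is then an elementary identity: for any $P \in \GL_n(\K)$, the computation $(PX)^T P^{-1} (PX)= X^T P^T X = X^T P X$ (using that the $1\times 1$ scalar $X^T P^T X$ equals its own transpose) shows that $X \mapsto X^T P^{-1} X$ is equivalent — in particular similar — to $X \mapsto X^T P X$ via the change of variables $X\mapsto PX$. The same holds for $Q$, so the similarity between $X\mapsto X^T P^{-1}X$ and $X\mapsto X^T Q^{-1}X$ is equivalent to that between $X\mapsto X^TPX$ and $X\mapsto X^TQX$, which finishes the direct implication.

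The converse simply retraces these steps: starting from the similarity of $X\mapsto X^TPX$ and $X\mapsto X^TQX$, the same inversion trick yields a nonzero scalar $\mu$ and a matrix $S\in\GL_n(\K)$ with $\mu S^T P^{-1}S - Q^{-1} \in \Mata_n(\K)$. I will then define $R := \mu QS^T P^{-1}$ and verify by direct computation — using the absorption identity — that both $RP\Mata_n(\K)S = Q\Mata_n(\K)$ and $RS - I_n \in Q\Mata_n(\K)$ hold, which gives $R(I_n+P\Mata_n(\K))S = I_n+Q\Mata_n(\K)$. I expect the only real difficulty to be organizational — keeping the repeated uses of the absorption identity in order; the genuine idea is the passage from $P$ to $P^{-1}$, which is what allows the base-point condition $RS - I_n \in Q\Mata_n(\K)$ to be read directly as a similarity of quadratic forms.
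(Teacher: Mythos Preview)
Your proof is correct and shares the same skeleton as the paper's: separate the equivalence into a translation-vector-space condition and a base-point condition, use Lemma~\ref{centralizer} to extract a scalar from the first, and read the second as ``congruence up to an alternate perturbation''. The organization, however, is genuinely different. The paper expresses the base point via $S=R(I_n+PA)$ and expands $\lambda Q=RPS^T=RPR^T-(RP)A(RP)^T$, landing directly on a congruence between $P$ and $\lambda Q$. You instead solve for $R=\mu QS^TP^{-1}$ and rewrite the base-point condition as $\mu S^TP^{-1}S-Q^{-1}\in\Mata_n(\K)$, which compares $P^{-1}$ and $Q^{-1}$; you then close the gap with the pleasant observation that $(PX)^TP^{-1}(PX)=X^TP^TX=X^TPX$, so $X\mapsto X^TPX$ and $X\mapsto X^TP^{-1}X$ are equivalent. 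This detour through inverses pays off in the converse: your $R:=\mu QS^TP^{-1}$ is automatically invertible, whereas the paper must verify separately that its $S:=R(I_n+PA)$ is non-singular (which it does by noting that $P^{-1}+A$ is non-isotropic). The paper's route is a shade more direct for the forward implication; yours is cleaner for the converse and isolates the one non-formal ingredient (the $P\leftrightarrow P^{-1}$ equivalence of quadratic forms) explicitly.
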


\begin{proof}
\begin{itemize}
\item Assume first that
$I_n+P\Mata_n(\K) \sim I_n+Q\Mata_n(\K)$, and choose a pair $(R,S)\in \GL_n(\K)^2$ such that
$R(I_n+P\Mata_n(\K))=(I_n+Q\Mata_n(\K))S$.
Obviously $S$ belongs to $(I_n+Q\Mata_n(\K))S$, hence $S=R(I_n+PA)$ for some $A \in \Mata_n(\K)$.
By comparing the translation vector spaces of $R(I_n+P\Mata_n(\K))$ and $(I_n+Q\Mata_n(\K))S$,
we also find that $RP\Mata_n(\K)=Q\Mata_n(\K)S=Q(S^T)^{-1}\Mata_n(\K)$. Therefore Proposition \ref{basicuniqueness} yields
a non-zero scalar $\lambda$ such that $RP=\lambda\, Q(S^T)^{-1}$.
It follows that $S^T=(I_n-AP^T)R^T$ and
$$\lambda\, Q = RPS^T = RP(I_n-AP^T)R^T = RPR^T-(RP)A(RP)^T.$$
Since $A$ is alternate, we find that $\lambda\, X^TQX=X^T(RPR^T)X=(R^TX)^TP(R^TX)$ for every $X \in \K^n$, and
the quadratic forms $X \mapsto X^TQX$ and $X \mapsto X^TPX$ are similar because $R^T$ is non-singular.

\item Conversely, assume that $X \mapsto X^TQX$ and $X \mapsto X^TPX$ are similar. Then there is a non-singular matrix $R \in \GL_n(\K)$, a non-zero
scalar $\lambda$ and an alternate matrix $A'$ such that $\lambda Q=RPR^T+A'$.
The matrix $A:=-(RP)^{-1}A'((RP)^T)^{-1}$ is congruent to $-A'$ and is therefore alternate.
We set $S:=R(I_n+PA)$.
Note that $S=RP(P^{-1}+A)$ is non-singular: indeed, $\forall X\in \K^n \setminus \{0\}, \; X^T(P^{-1}+A)X=X^TP^{-1}X \neq 0$
since $P^{-1}$ is non-isotropic, hence $P^{-1}+A$ is non-singular. \\
However $S^T=(I_n-AP^T)R^T$, therefore
$$RPS^T=RPR^T-(RP)A(RP)^T=RPR^T+A'=\lambda\, Q.$$
We deduce that
$$R\bigl(P\Mata_n(\K)\bigr)=\lambda\, Q(S^T)^{-1} \Mata_n(\K)=\bigl(Q \Mata_n(\K)\bigr)S.$$
We have just proven that the affine subspaces $R(I_n+P\Mata_n(\K))$ and $(I_n+Q\Mata_n(\K))S$ have $S$ as common point
and have the same translation vector space, hence they are equal. This yields $I_n+P\Mata_n(\K) \sim I_n+Q\Mata_n(\K)$.
\end{itemize}
\end{proof}

\noindent Finally, the following lemma will be a major key to unlock our proof of Theorem \ref{irreducible}:

\begin{lemme}\label{dernierrangement}
Let $n \geq 3$. Assume $\# \K \geq 3$.
Let $V$ be a $\binom{n}{2}$-dimensional linear subspace of $\Mat_n(\K)$ which acts totally intransitively on $\K^n$. \\
Assume that there exists a linear hyperplane $H$ of $V$ such that $H \subset \Mata_n(\K)$. Then $V=\Mata_n(\K)$.
\end{lemme}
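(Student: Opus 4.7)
The plan is to pick an arbitrary $M \in V \setminus H$, so that $V = H \oplus \K M$, and prove that $M$ lies in $\Mata_n(\K)$; since $\dim V = \binom{n}{2} = \dim \Mata_n(\K)$, this will give $V = \Mata_n(\K)$. Equivalently, I must show that the quadratic form $q_M \colon X \mapsto X^T M X$ vanishes identically on $\K^n$. To this end, I first split the non-zero vectors of $\K^n$ into \emph{good} and \emph{bad} ones relative to $H$. For $X \in \K^n \setminus \{0\}$, the evaluation map $\mu_X \colon A \mapsto AX$ from $\Mata_n(\K)$ onto $\{X\}^\bot$ (Lemma \ref{action1}) has kernel $K_X$ of dimension $\binom{n-1}{2}$, and a rank--nullity count on the restriction $\mu_X|_H$ shows that $HX = \{X\}^\bot$ if and only if $K_X \not\subset H$; call such an $X$ good. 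When $X$ is good, $VX = HX + \K MX$ already contains the hyperplane $\{X\}^\bot$, and total intransitivity of $V$ rules out $VX = \K^n$, whence $VX = \{X\}^\bot$ and $q_M(X) = 0$.

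The crucial step is to bound the bad set $B := \{X \in \K^n \setminus \{0\} : K_X \subset H\}$. Let $\psi \in \Mata_n(\K)^\ast$ be a non-zero functional with kernel $H$, and, using the trace pairing on $\Mat_n(\K)$, write $\psi(A) = \tr(\Phi A)$ for some $\Phi \in \Mat_n(\K)$. A direct computation, valid in any characteristic, shows that the annihilator of $\Mata_n(\K)$ for this pairing is precisely the space of symmetric matrices. For $X \neq 0$, the inclusion $K_X \subset H$ is equivalent to $\psi$ factoring through the surjection $\mu_X$, hence to the existence of some $Y \in \K^n$ with $\psi(A) = Y^T A X = \tr(XY^T A)$ for every $A \in \Mata_n(\K)$. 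Unwinding, this boils down to the matrix identity $\Phi - \Phi^T = XY^T - YX^T$. Since $\psi \neq 0$, the alternate matrix $\Phi - \Phi^T$ is non-zero, hence of even rank $r \geq 2$. If $r \geq 4$, no non-zero $X$ can be bad; if $r = 2$, each bad $X$ belongs to the $2$-dimensional image of $\Phi - \Phi^T$. Either way $B \cup \{0\}$ is a linear subspace of $\K^n$ of dimension at most $2$, hence a proper subspace because $n \geq 3$.

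The last step will be to propagate the vanishing of $q_M$ from $\K^n \setminus B$ onto all of $\K^n$, using $\#\K \geq 3$. Pick $Y \in \K^n \setminus (B \cup \{0\})$, so $q_M(Y) = 0$ by the good-vector analysis. For any $X \in B \cup \{0\}$ and any $t \in \K \setminus \{0\}$, the point $X + tY$ lies outside the subspace $B \cup \{0\}$, so
$$0 = q_M(X + tY) = q_M(X) + t\,\varphi_M(X,Y)$$
with $\varphi_M(X,Y) := X^T(M+M^T)Y$. Since $\#\K \geq 3$ provides at least two distinct non-zero values of $t$, these two linear equations force $q_M(X) = 0$. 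Hence $q_M \equiv 0$, so $M \in \Mata_n(\K)$ and $V = \Mata_n(\K)$. The main technical obstacle is the second paragraph: translating the inclusion $K_X \subset H$ into the rank-two decomposition of $\Phi - \Phi^T$ and thereby confining the bad set inside a $2$-dimensional subspace. Once this is in place, the rank--nullity counts and the degree-two polynomial argument are routine.
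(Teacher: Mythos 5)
Your proof is correct, and while it follows the same broad outline as the paper's argument --- show that $q_M$ vanishes at every $X$ for which $HX = \{X\}^\bot$, confine the exceptional vectors to a plane, then propagate the vanishing using $\# \K \geq 3$ --- the way you execute the key middle step is genuinely different. The paper argues by hand: starting from a vector $X_1$ with $\dim(HX_1)<n-1$, it produces $X_2$ with $X_2^TMX_1=0$ for all $M \in H$, performs a congruence sending $(X_1,X_2)$ to $(e_1,e_2)$, identifies the transformed $H$ as the full space of alternate matrices with vanishing $(2,1)$ entry by a dimension count, and reads off that $\dim(HX)=n-1$ for every $X$ outside $\Vect(X_1,X_2)$. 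You instead dualize: writing the hyperplane $H$ of $\Mata_n(\K)$ as the kernel of $A \mapsto \tr(\Phi A)$ and translating the condition $K_X \subset H$ into $\Phi-\Phi^T=XY^T-YX^T$, so that every bad vector lies in the image of the non-zero alternate matrix $\Phi-\Phi^T$, which has dimension $2$ when any bad vector exists at all. This buys a basis-free, exact description of the exceptional locus, at the cost of the duality computation identifying the annihilator of $\Mata_n(\K)$ under the trace form as the symmetric matrices --- which you correctly note holds in every characteristic, including $2$. The final propagation also differs mildly: the paper restricts $q_M$ to a plane and counts isotropic lines, whereas you polarize along the affine line $t \mapsto X+tY$ and use two distinct non-zero values of $t$; the two uses of $\# \K \geq 3$ are equivalent in strength. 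One cosmetic remark: your displayed identity for $q_M(X+tY)$ silently omits the term $t^2\,q_M(Y)$, which is legitimate only because you chose $Y$ with $q_M(Y)=0$; it is worth saying so explicitly.
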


\begin{proof}
Let $A \in V$. We prove that $A$ is alternate, i.e.\ that
the quadratic form $q : X \mapsto X^TAX$ is zero. We denote by
 $(e_1,\dots,e_n)$ the canonical basis of $\K^n$. \\
Let $X \in \K^n \setminus \{0\}$. If $\dim(HX)=n-1$ then
$AX \in HX$ since $HX \subset VX \subsetneq \K^n$, and hence $q(X)=0$. \\
If $\dim(HX)=n-1$ for every $X \in \K^n \setminus \{0\}$, then we readily have $q=0$. \\
Assume now that $\dim(HX_1)<n-1$ for some $X_1 \in \K^n \setminus \{0\}$. \\
This shows that there exists $X_2 \in \K^n \setminus \Vect(X_1)$ such that $X_2^TMX_1=0$ for every
$M \in H$. Let $X_3 \in \K^n \setminus \Vect(X_1,X_2)$. We may choose a
non-singular matrix $P \in \GL_n(\K)$ such that $Pe_i=X_i$ for every $i \in \lcro 1,3\rcro$. \\
Then $V':=P^TVP$ acts totally intransitively on $\K^n$ and contains the hyperplane
$H':=P^THP \subset \Mata_n(\K)$. We now have $e_2^TMe_1=0$ for every $M \in H'$, hence
$H'$ is included in the space $V_1$ of all alternate matrices $A=(a_{i,j})$ of $\Mat_n(\K)$ such that
$a_{2,1}=0$. The dimension of this space is obviously $\binom{n}{2}-1$, and therefore
$H'=V_1$. Then it is obvious that $\dim (H'e_3)=n-1$ and hence $\dim(HX_3)=n-1$. \\
We have therefore proven that
$$\forall X \in \K^n \setminus \Vect(X_1,X_2), \; q(X)=0.$$
It now suffices to show that $q$ vanishes everywhere on $\Vect(X_1,X_2)$. \\
Let $X \in \Vect(X_1,X_2) \setminus \{0\}$.
We choose an arbitrary vector $X_3 \in \K^n \setminus \Vect(X_1,X_2)$.
The plane $\Vect(X,X_3)$ satisfies $\Vect(X,X_3) \cap \Vect(X_1,X_2)=\Vect(X)$.
Since $\card \K>2$, this plane has at least four distinct $1$-dimensional subspaces,
three of which are different from $\Vect(X)$. We deduce that the quadratic form $q$ vanishes
on at least three $1$-dimensional subspaces of $\Vect(X,X_3)$.
Classically, this shows that $q$ vanishes everywhere on $\Vect(X,X_3)$
(indeed, a non-zero homogeneous polynomial of degree $2$ on $\K^2$ has at most $2$ zeroes
in the projective line $\mathbb{P}(\K^2)$).
In particular $q(X)=0$. We deduce that $q=0$, which completes our proof.
\end{proof}

\section{The uniqueness statement in the two classification theorems}\label{sectionunicite}

The uniqueness statement in Theorem \ref{classtrivialspectrum} is equivalent to the following result, which we prove right away:

\begin{prop}\label{uniquenessproposition1}
Let $(P_1,\dots,P_p)$ and $(Q_1,\dots,Q_q)$ be two families of non-isotropic matrices, respectively
of $\GL_{n_1}(\K) \times \cdots \times \GL_{n_p}(\K)$ and $\GL_{m_1}(\K) \times \cdots \times \GL_{m_q}(\K)$.
In order that
$$P_1\Mata_{n_1}(\K) \vee \cdots \vee
P_p\Mata_{n_p}(\K) \simeq Q_1\Mata_{m_1}(\K) \vee \cdots \vee
Q_q\Mata_{m_q}(\K),$$
it is necessary and sufficient that $q=p$ and $P_k$ be congruent to a scalar multiple of $Q_k$ for every $k \in \lcro 1,p\rcro$.
\end{prop}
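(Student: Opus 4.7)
The plan is to handle the sufficient direction by an explicit block-diagonal similarity, and to handle the uniqueness direction by identifying the standard flag $F_k := \K^{n_1+\cdots+n_k} \times \{0\}$ as the unique maximal chain of $V$-stable subspaces of $\K^n$, where $V := P_1\Mata_{n_1}(\K) \vee \cdots \vee P_p\Mata_{n_p}(\K)$.

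For sufficiency, assume $P_k$ is congruent to a nonzero scalar multiple of $Q_k$ for each $k$. Proposition~\ref{basicuniqueness} furnishes $R_k \in \GL_{n_k}(\K)$ with $R_k(Q_k \Mata_{n_k}(\K))R_k^{-1} = P_k \Mata_{n_k}(\K)$. Setting $R := \mathrm{diag}(R_1,\dots,R_p) \in \GL_n(\K)$, conjugation by $R$ sends each diagonal block $Q_k\Mata_{n_k}(\K)$ to $P_k\Mata_{n_k}(\K)$, while each strict upper off-diagonal block $\Mat_{n_i,n_j}(\K)$ ($i<j$) is mapped bijectively onto itself via $B \mapsto R_i B R_j^{-1}$. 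This produces the desired similarity between the two direct sums.

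For necessity, I first claim, by induction on $p$, that the $V$-stable subspaces of $\K^n$ are exactly $\{0\}=F_0 \subsetneq F_1 \subsetneq \cdots \subsetneq F_p = \K^n$. The case $p=1$ is the irreducibility of $P_1\Mata_{n_1}(\K)$ established above. For $p \geq 2$, let $W$ be $V$-stable; since $V$ restricts on $F_1$ to $P_1\Mata_{n_1}(\K)$, irreducibility gives $W \cap F_1 \in \{0, F_1\}$. If $W \cap F_1 = F_1$, I apply the induction hypothesis to the induced action on $\K^n/F_1$, which is $P_2\Mata_{n_2}(\K) \vee \cdots \vee P_p\Mata_{n_p}(\K)$. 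If $W \cap F_1 = \{0\}$, then for $w \in W$ with bottom part $b \in \K^{n-n_1}$, the stability of $W$ under the matrices of $V$ with zero diagonal blocks and arbitrary upper-right block $B \in \Mat_{n_1, n-n_1}(\K)$ forces $Bb \in W \cap F_1 = \{0\}$ for every $B$, whence $b=0$ and $w \in W \cap F_1 = \{0\}$; so $W = \{0\}$. Given then a similarity $RVR^{-1} = V'$, the map $R$ carries this unique maximal chain bijectively to its analogue for $V'$: comparing dimensions yields $p=q$, $n_k=m_k$, and $R(F_k)=F_k$. Hence $R$ is block upper triangular in the standard basis, and its $k$-th diagonal block $R_k \in \GL_{n_k}(\K)$ satisfies $R_k P_k\Mata_{n_k}(\K)R_k^{-1} = Q_k\Mata_{n_k}(\K)$; Proposition~\ref{basicuniqueness} then concludes that $P_k$ is congruent to a nonzero scalar multiple of $Q_k$.

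The main obstacle is the determination of all $V$-stable subspaces: the argument relies crucially both on the irreducibility of each diagonal block $P_k\Mata_{n_k}(\K)$ (itself hinging on Lemma~\ref{action1}) and on the fullness of the strictly upper off-diagonal blocks built into the $\vee$-construction, which is what rules out ``transversal'' invariant subspaces beyond the canonical flag.
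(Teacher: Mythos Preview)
Your proof is correct but takes a different route from the paper's. For necessity, the paper computes the orbit dimensions: it shows that $\dim(VX)=n_1+\cdots+n_k-1$ for every $X\in F_k\setminus F_{k-1}$ (via Lemma~\ref{action1} and a change of basis bringing $X$ to the standard position $e_{n_1+\cdots+n_{k-1}+1}$), so that $F_k=\{X:\dim VX\le n_1+\cdots+n_k-1\}$ is an intrinsic subset of $\K^n$; any conjugating matrix must then preserve each $F_k$. You instead characterise the $F_k$'s directly as the \emph{only} $V$-invariant subspaces, by an induction that exploits the irreducibility of each diagonal block together with the fullness of the strict upper blocks. Both arguments land at the same conclusion ($R$ block upper-triangular, then apply Proposition~\ref{basicuniqueness}), and your approach is arguably more conceptual since it recycles the irreducibility already established rather than redoing the $\dim(VX)$ computation. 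The trade-off is that the paper's orbit-dimension formula is itself reused later (in Proposition~\ref{uniquenessproposition2} and in Claim~\ref{compatclaim}), so its proof is doing double duty; your invariant-subspace description would serve just as well in those places, but you would need to supply it there separately.
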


\begin{proof}
The ``sufficient condition" statement follows immediately from Proposition \ref{basicuniqueness}. \\
For the converse statement, set $V:=P_1\Mata_{n_1}(\K) \vee \cdots \vee
P_p\Mata_{n_p}(\K)$ and $W:= Q_1\Mata_{m_1}(\K) \vee \cdots \vee
Q_q\Mata_{m_q}(\K)$. \\
For $k \in \lcro 1,p\rcro$, set $F_k:=\K^{n_1+\dots+n_k} \times \{0\} \subset \K^n$
, where $n:=n_1+\cdots+n_p$.
Set also $F_0=\{0\}$ and denote by $(e_1,\dots,e_n)$ the canonical basis of $\K^n$.
Set $k \in \lcro 1,p\rcro$. Our key statement is the set of equalities:
$$\forall X \in F_k \setminus F_{k-1}, \quad \dim (VX)=n_1+\dots+n_k-1.$$
Note first that the case $X=e_{n_1+\cdots+n_{k-1}+1}$ follows trivially from Lemma \ref{action1}. \\
Consider now an arbitrary vector $X \in F_k \setminus F_{k-1}$. Then $e_1,\dots,e_{n_1+\cdots+n_{k-1}},X$
are linearly independent, and may therefore be completed as a basis $(e_1,\dots,e_{n_1+\cdots+n_{k-1}},X,f_2,\dots,f_{n_k})$
of $F_k$. Therefore
$$\calB:=(e_1,\dots,e_{n_1+\cdots+n_{k-1}},X,f_2,\dots,f_{n_k},e_{n_1+\cdots+n_k+1},\dots,e_n)$$
is a basis of $\K^n$ and the matrix of coordinates $R$ of $\calB$ in the canonical basis of $\K^n$
belongs to $\GL_{n_1}(\K) \vee \cdots \vee \GL_{n_p}(\K)$ and satisfies $Re_{n_1+\cdots+n_{k-1}+1}=X$.
Proposition \ref{basicuniqueness} thus yields a list  of non-isotropic matrices
$(P'_1,\dots,P'_p) \in \GL_{n_1}(\K) \times \cdots \times \GL_{n_p}(\K)$
for which
$$RVR^{-1} \subset P'_1 \Mata_{n_1}(\K) \vee \cdots \vee
P'_p\Mata_{n_p}(\K)$$
and therefore $RVR^{-1}=P'_1 \Mata_{n_1}(\K) \vee \cdots \vee
P'_p\Mata_{n_p}(\K)$ as the dimensions equal $\binom{n}{2}$ on both sides.
Applying the special case of $e_{n_1+\cdots+n_{k-1}+1}$ to $RVR^{-1}$ then yields $\dim (VX)=\dim(RVX)=\dim (RVR^{-1})(RX)=n_1+\cdots+n_k-1$.
\vskip 2mm
\noindent It follows that
$$\bigl\{\dim(VX)\mid X \in \K^n\bigr\}=\{0,n_1-1,n_1+n_2-1,\dots,n_1+\dots+n_p-1\}$$
has cardinality $p+1$. The same holds for $W$ instead of $V$ with the $m_j$'s in place of the $n_k$'s.
Since $V$ is similar to $W$, one has $\bigl\{\dim(VX)\mid X \in \K^n\bigr\}=\bigl\{\dim(WX)\mid X \in \K^n\bigr\}$
and we deduce successively that $q=p$ and $(n_1,\dots,n_p)=(m_1,\dots,m_q)$. Now, set $P \in \GL_n(\K)$ such that
$W=P^{-1}VP$. For every $k \in \lcro 1,p\rcro$, remark that
$$\bigl\{X \in \K^n: \; \dim VX \leq n_1+\dots+n_k-1\bigr\}=F_k=\bigl\{X \in \K^n:\; \dim WX \leq n_1+\dots+n_k-1\bigr\},$$
hence $P$ stabilizes $F_k$. This shows that $P \in \GL_{n_1}(\K)\vee \cdots \vee \GL_{n_p}(\K)$,
which in turn proves that $P_k\Mata_{n_k}(\K)$ is similar to $Q_k\Mata_{n_k}(\K)$ for every $k \in \lcro 1,p\rcro$.
Proposition \ref{basicuniqueness} finally yields that $P_k$ is congruent to a scalar multiple of $Q_k$, for every $k \in \lcro 1,p\rcro$.
\end{proof}

\begin{prop}\label{uniquenessproposition2}
Let $(P_1,\dots,P_p)$ and $(Q_1,\dots,Q_q)$ be two families of non-isotropic matrices, respectively
in $\GL_{n_1}(\K) \times \cdots \times \GL_{n_p}(\K)$ and $\GL_{m_1}(\K) \times \cdots \times \GL_{m_q}(\K)$.
In order that
$$\bigl(I_{n_1}+P_1\Mata_{n_1}(\K)\bigr) \vee \cdots \vee
\bigl(I_{n_p}+P_p\Mata_{n_p}(\K)\bigr) \sim \bigl(I_{m_1}+Q_1\Mata_{m_1}(\K)\bigr) \vee \cdots \vee
\bigl(I_{m_q}+Q_q\Mata_{m_q}(\K)\bigr),$$
it is necessary and sufficient that $q=p$ and that the (non-isotropic) quadratic form $X \mapsto X^TP_k X$
be similar to $X \mapsto X^TQ_kX$ for every $k \in \lcro 1,p\rcro$.
\end{prop}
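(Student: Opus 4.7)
The plan is to settle sufficiency by applying Proposition~\ref{lemmeequivalence} block by block, and to reduce necessity to Proposition~\ref{uniquenessproposition1} via a normalization exploiting the fact that $I_n$ lies in both affine spaces. Throughout, I will use the easy identity $(I_{n_1}+P_1\Mata_{n_1}(\K))\vee\cdots\vee(I_{n_p}+P_p\Mata_{n_p}(\K))=I_n+V$ with $V:=P_1\Mata_{n_1}(\K)\vee\cdots\vee P_p\Mata_{n_p}(\K)$, obtained by absorbing the identity summands into the translation space, and I will write the right-hand side analogously as $I_n+W$.

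For sufficiency, Proposition~\ref{lemmeequivalence} provides, for each $k$, matrices $R_k,S_k\in\GL_{n_k}(\K)$ with $R_k(I_{n_k}+P_k\Mata_{n_k}(\K))=(I_{n_k}+Q_k\Mata_{n_k}(\K))S_k$. Assembled into block-diagonal matrices $R:=R_1\oplus\cdots\oplus R_p$ and $S:=S_1\oplus\cdots\oplus S_p$, these yield $R(I_n+V)=(I_n+W)S$: the diagonal-block equalities are exactly the individual ones, and the off-diagonal blocks remain arbitrary on both sides because $R$ and $S$ are invertible and block-diagonal.

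For necessity, suppose $R(I_n+V)Q=I_n+W$ for some $R,Q\in\GL_n(\K)$. Since $I_n\in I_n+V$, the matrix $S_0:=RQ$ lies in $I_n+W$, hence is block upper-triangular with diagonal blocks $I_{m_k}+Q_kB_k$ for some alternate matrices $B_k$. Right-multiplying the affine equality by $S_0^{-1}$ yields $R(I_n+V)R^{-1}=(I_n+W)S_0^{-1}$, and since both sides contain $I_n$, identification of translation vector spaces gives the linear identity $RVR^{-1}=WS_0^{-1}$. Now $S_0^{-1}$ is block upper-triangular with invertible diagonal blocks, so $WS_0^{-1}$ is block upper-triangular with arbitrary off-diagonal blocks and $k$-th diagonal block $Q_k\Mata_{m_k}(\K)\cdot(I_{m_k}+Q_kB_k)^{-1}$. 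Using the identity $P\Mata_n(\K)\,M=(PM^{-T})\Mata_n(\K)$ recalled at the start of Section~\ref{espacesPA}, this diagonal block rewrites as $P_k'\Mata_{m_k}(\K)$ with $P_k':=Q_k-Q_kB_kQ_k^T$; crucially, since $B_k$ is alternate, $X^TP_k'X=X^TQ_kX$ for every $X$, so $P_k'$ is non-isotropic and shares its associated quadratic form with $Q_k$.

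It follows that $V$ is similar to $P_1'\Mata_{m_1}(\K)\vee\cdots\vee P_q'\Mata_{m_q}(\K)$, and Proposition~\ref{uniquenessproposition1} then forces $p=q$, $n_k=m_k$ for all $k$, and $P_k$ congruent to a scalar multiple of $P_k'$. This last condition immediately gives that $X\mapsto X^TP_kX$ is similar to $X\mapsto X^TP_k'X=X^TQ_kX$, as required. I expect the main obstacle to be the key computation in the third paragraph: tracking the diagonal blocks of $WS_0^{-1}$ and noticing that the correction $Q_k\mapsto Q_k-Q_kB_kQ_k^T$ by an alternate $B_k$ is invisible to the quadratic form — this invariance is precisely what upgrades the congruence-up-to-scalar output of Proposition~\ref{uniquenessproposition1} into the similarity-of-quadratic-forms condition in the statement.
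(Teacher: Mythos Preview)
Your proof is correct. Both you and the paper begin the necessity direction by observing that $S_0:=RQ$ lies in $I_n+W$ and is therefore block upper-triangular, and both reduce to Proposition~\ref{uniquenessproposition1} through a similarity between $V$ and a $\vee$-product built from $W$. The paper, however, finishes differently: after obtaining $(n_1,\dots,n_p)=(m_1,\dots,m_q)$ from Proposition~\ref{uniquenessproposition1}, it reuses the filtration argument from that proposition's proof to show that $S$ (and hence $R$) is itself block upper-triangular, and then reads off $I_{n_k}+P_k\Mata_{n_k}(\K)\sim I_{n_k}+Q_k\Mata_{n_k}(\K)$ on each diagonal block, invoking Proposition~\ref{lemmeequivalence} once more to conclude. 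Your route bypasses this: by computing explicitly that the $k$-th diagonal block of $WS_0^{-1}$ is $P_k'\Mata_{m_k}(\K)$ with $P_k'=Q_k-Q_kB_kQ_k^T$ and noting that the alternate correction leaves the quadratic form $X\mapsto X^TQ_kX$ untouched, you extract the similarity of forms directly from the congruence-up-to-scalar conclusion of Proposition~\ref{uniquenessproposition1}, without ever needing to show that $R$ and $S$ are individually block-triangular or to re-invoke Proposition~\ref{lemmeequivalence}. This is a tidy shortcut; the paper's approach, by contrast, gives the additional structural information that the equivalence is realized by block-triangular matrices.
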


\begin{proof}
The ``sufficient condition" statement follows trivially from Proposition \ref{lemmeequivalence}.
For the converse statement, let us set $\calV:=(I_{n_1}+P_1 \Mata_{n_1}(\K)) \vee \cdots \vee (I_{n_p}+P_p \Mata_{n_p}(\K))$
and $\calW:=(I_{m_1}+Q_1 \Mata_{m_1}(\K)) \vee \cdots \vee (I_{m_q}+Q_q \Mata_{m_q}(\K))$,
and assume that $\calV \sim \calW$. Choose two non-singular matrices $R$ and $S$ such that $\calW=R\calV S$.
Denote by $V$ (resp.\ by $W$) the translation vector space of $\calV$ (resp.\ of $\calW$), and set $n:=\underset{k=1}{\overset{p}{\sum}}n_k$.
Then
$$\calW=(RS)S^{-1}(I_n+V)S=(RS)(I_n+S^{-1}VS).$$
In particular $RS \in \calW$ and the comparison of translation vector spaces yields
$S^{-1}VS=(RS)^{-1}W$. The first result yields that $RS$ is upper block-triangular with diagonal blocks $R_1,\dots,R_q$ where
$R_k \in \GL_{m_k}(\K)$ for every $k \in \lcro 1,q\rcro$.
Thus
$$S^{-1}VS=(RS)^{-1}W=(R_1^{-1}Q_1) \Mata_{m_1}(\K) \vee \cdots \vee (R_q^{-1}Q_q) \Mata_{m_q}(\K)$$
and the $R_k^{-1}Q_k$'s are necessarily non-isotropic since $S^{-1}VS$ has a trivial spectrum.
We deduce from Proposition \ref{uniquenessproposition1} that $(n_1,\dots,n_p)=(m_1,\dots,m_q)$.
With the line of reasoning from the proof of Proposition \ref{uniquenessproposition1},
we also find that $S \in \GL_{n_1}(\K)\vee \cdots \vee \GL_{n_p}(\K)$. However we already know that
$RS$ belongs to $\GL_{n_1}(\K)\vee \cdots \vee \GL_{n_p}(\K)$ and hence $R=(RS)S^{-1} \in \GL_{n_1}(\K)\vee \cdots \vee \GL_{n_p}(\K)$.

Returning to $R\calV S=\calW$ finally entails that $I_{n_k}+Q_k\Mata_{n_k}(\K)$ is equivalent to
$I_{n_k}+P_k\Mata_{n_k}(\K)$ for each $k \in \lcro 1,p\rcro$, and Proposition \ref{lemmeequivalence}
then yields that $X \mapsto X^TP_kX$ is similar to $X \mapsto X^TQ_kX$ for each $k \in \lcro 1,p\rcro$.
\end{proof}

\section{Structure of the irreducible maximal spaces with a trivial spectrum}\label{classificationsection}

In the whole section, we assume $\# \K \geq 3$. We will prove Theorem \ref{irreducible} by induction.
The case $n=1$ needs no explanation.

\subsection{The case $n=2$}

Let $V$ be an irreducible maximal linear subspace of $\Mat_2(\K)$ with a trivial spectrum.
Then $V=\Vect(M)$ for some $M \in \Mat_2(\K) \setminus \{0\}$ with no non-zero eigenvalue.
If $0$ is an eigenvalue of $M$, then $M$ is triangularizable and $V$ is not irreducible. \\
Hence $M$ is non-singular.
Setting $K:=\begin{bmatrix}
0 & 1 \\
-1 & 0
\end{bmatrix}$ and $P:=MK^{-1}$, we readily have $P\Mata_2(\K)=\Vect(M)=V$ and Lemma \ref{anisotropicequivalence}
shows that $P$ is non-isotropic.

\subsection{Setting things up}\label{setupsection}

Let $n \geq 2$ and assume that
the result of Theorem \ref{irreducible} holds for any positive integer $k \leq n$.
Let $V \subset \Mat_{n+1}(\K)$ be a maximal subspace with a trivial spectrum.
Denote by $(e_1,\dots,e_{n+1})$ the canonical basis of $\K^{n+1}$.
We wish to show that $V$ is reducible or similar to $P\Mata_{n+1}(\K)$ for some $P \in \GL_{n+1}(\K)$,
in which case Lemma \ref{anisotropicequivalence} guarantees that $P$ must be non-isotropic.

Of course, this amounts to finding a basis of $\K^{n+1}$ in which all the endomorphisms $X \mapsto MX$ of $\K^{n+1}$, for $M \in V$,
have a ``reduced" shape that is essentially the one described in Theorem \ref{classtrivialspectrum}.
The first problem is how to select the last vector $f_{n+1}$ of such a basis.
Since the rank of an alternate matrix is even, an obvious necessary condition is that
$V$ should not contain any matrix with $\Vect(f_{n+1})$ as column space. Our starting point is that
such a vector exists (and may even be chosen amongst the canonical basis  of $\K^{n+1}$).
This has already been proven in \cite[Proposition 10]{dSPlargerank}: we reproduce a proof since it is short
and the result is crucial to our study.

\begin{lemme}
Let $W$ be a linear subspace of $\Mat_p(\K)$ with a trivial spectrum.
Then there exists a non-zero vector $X \in \K^p$ such that
$W$ contains no matrix $M$ with $\Vect(X)$ as column space.
\end{lemme}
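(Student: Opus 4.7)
The plan is to prove the stronger statement that the witness vector $X$ can be chosen from the canonical basis $(e_1,\dots,e_p)$ of $\K^p$. Assuming for contradiction that every $e_i$ is \emph{bad}, one picks for each $i \in \lcro 1,p\rcro$ a vector $Y_i \in \K^p \setminus \{0\}$ with $e_i Y_i^T \in W$. Since the rank-one matrix $e_i Y_i^T$ has eigenvalue $(Y_i)_i$ on the eigenvector $e_i$, the trivial spectrum hypothesis immediately forces $(Y_i)_i = 0$. Encode this data in the directed graph $G$ on the vertex set $\lcro 1,p\rcro$ with an arc $i \to j$ whenever $(Y_i)_j \neq 0$; then $G$ has no loops, and the goal is to show that $G$ is acyclic, since any sink $v$ of such a DAG would satisfy $Y_v = 0$, contradicting $Y_v \neq 0$.

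Acyclicity is established by strong induction on the cycle length $k \geq 2$. Given a hypothetical $k$-cycle $v_1 \to v_2 \to \cdots \to v_k \to v_1$ in $G$, consider, for parameters $\lambda_1,\dots,\lambda_k \in \K$,
\[
N := \sum_{j=1}^{k} \lambda_j\, e_{v_j} Y_{v_j}^T \; \in \; W,
\]
whose nonzero rows lie only in positions $v_1,\dots,v_k$. Expanding $\det(\mu I - N)$ along the $p-k$ zero rows yields $\chi_N(\mu) = \mu^{p-k}\, \chi_{N''}(\mu)$, where $N''$ is the principal $k \times k$ submatrix of $N$ on $\{v_1,\dots,v_k\}$. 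Expanding $\det(\mu I_k - N'')$ as a sum over permutations of $\{v_1,\dots,v_k\}$ and grouping contributions by cycle decomposition, the inductive hypothesis kills every term involving any non-trivial cycle of length $<k$, leaving only the identity (contributing $\mu^k$) and those $k$-cycles $\sigma$ whose cyclic order realizes a Hamiltonian cycle of $G$ on $\{v_1,\dots,v_k\}$. A short graph-theoretic argument shows that two distinct such Hamiltonian cycles would force $G$ to contain a shorter cycle: starting at a vertex $v^*$ where the two cyclic orders disagree, one uses one $\sigma$-arc to $\sigma(v^*) \neq \tau(v^*)$ and then closes up along $\tau$, obtaining a cycle of length strictly less than $k$; this contradicts the inductive hypothesis. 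Therefore exactly one $k$-cycle of $G$ lies on $\{v_1,\dots,v_k\}$ and
\[
\chi_{N''}(\mu) = \mu^k - \lambda_1 \lambda_2 \cdots \lambda_k\, P, \qquad P := (Y_{v_1})_{v_2}\, (Y_{v_2})_{v_3} \cdots (Y_{v_k})_{v_1} \neq 0.
\]
Choosing $\lambda_1 := 1/P$ and $\lambda_2 = \cdots = \lambda_k := 1$ makes $\mu = 1$ a non-zero eigenvalue of $N \in W$, violating the trivial spectrum hypothesis.

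The main technical obstacle is the permutation and determinant bookkeeping in the inductive step. One must verify that the off-block entries of $N$ (in rows $v_j$ and columns outside $\{v_1,\dots,v_k\}$) have no effect on $\chi_N$, justify that every permutation containing a non-trivial sub-cycle of length less than $k$ contributes zero by the inductive hypothesis, and prove the elementary claim that two distinct Hamiltonian cycles on a common vertex set of a directed graph already generate a shorter cycle. Once these points are settled, the remainder of the argument (producing $\mu = 1$ as an eigenvalue and closing out via the sink of a DAG) is a short algebraic manipulation and uses only $\# \K \geq 2$.
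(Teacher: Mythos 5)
Your proof is correct, but it takes a genuinely different route from the paper's. Both arguments start from the same contradiction hypothesis (for each $i$, the space $W$ contains a rank-one matrix with column space $\Vect(e_i)$) and both end by exhibiting a cycle that forces the eigenvalue $1$; what differs is how the cycle is tamed. The paper runs an induction on the size $p$: compressing the matrices of $W$ with zero last row to their upper-left $(p-1)\times(p-1)$ block, it shows that the offending rank-one matrices may be taken to be \emph{elementary} matrices $E_{f(j),j}$ with $f(j)\neq j$, after which a cycle of $f$ gives a matrix of $W$ fixing $\sum_k e_{j_k}$ with no computation at all. You keep the general rank-one matrices $e_iY_i^T$ and attack the support digraph directly; this costs you the characteristic-polynomial bookkeeping and an inner induction on cycle length, but dispenses with the compression step and produces the eigenvalue by the explicit scaling $\lambda_1=1/P$ rather than an explicit eigenvector. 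The points you flag do all go through: after reordering, $\mu I_p-N$ is block upper-triangular, so the off-block entries of $N$ are irrelevant; a non-identity permutation of $\{v_1,\dots,v_k\}$ other than a full $k$-cycle contains a cycle of length in $\lcro 2,k-1\rcro$ whose arcs would all have to lie in $G$, which the inner induction hypothesis forbids; and two distinct Hamiltonian $k$-cycles $\sigma,\tau$ yield, from any $v^*$ with $\sigma(v^*)\neq\tau(v^*)$, a simple cycle of length at most $k-1$, since the $\tau$-distance from $\sigma(v^*)$ back to $v^*$ is at most $k-2$ precisely because $\sigma(v^*)\neq\tau(v^*)$. As you note, neither proof needs $\#\K\geq 3$.
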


\begin{proof}
Denote by $(e_1,\dots,e_p)$ the canonical basis of $\K^p$. For $X \in \K^p\setminus \{0\}$,
set $W_X:=\bigl\{M \in W : \; \im(M) \subset \Vect(X)\bigr\}$.
For $(i,j)\in \lcro 1,p\rcro^2$,
denote by $E_{i,j}$ the matrix of $\Mat_p(\K)$ with zero entries everywhere except at the $(i,j)$-spot
where the entry is $1$. \\
We prove, by induction on $p$, that there exists an index $i \in \lcro 1,p\rcro$ such that
$W_{e_i}=\{0\}$. The case $p=1$ is trivial. \\
Assume that $W_{e_i}\neq \{0\}$ for every $i \in \lcro 1,p\rcro$,
denote by $W'$ the linear subspace of $W$ consisting of its matrices with zero as last row, and write every $M \in W'$
as
$$M=\begin{bmatrix}
J(M) & [?]_{(p-1) \times 1} \\
[0]_{1 \times (p-1)} & 0
\end{bmatrix} \quad \text{with $J(M) \in \Mat_{p-1}(\K)$.}$$
Then $J(W')$ is a linear subspace of $\Mat_{p-1}(\K)$ with a trivial spectrum.
The induction hypothesis yields an index $i \in \lcro 1,p-1\rcro$ such that $J(W')_{e_i}=\{0\}$. \\
Since $W_{e_i}\neq \{0\}$, we find a matrix $M \in W$ such that $\im(M)=\Vect(e_i)$.
Then $M \in W'$ and it follows from $J(W')_{e_i}=\{0\}$ that $M$ is a non-zero scalar multiple of $E_{i,p}$.
Therefore $E_{i,p} \in W$. \\
Now, taking an arbitrary permutation matrix $P \in \GL_n(\K)$ and applying the previous step to $PWP^{-1}$ yields the following generalization:
for every $j \in \lcro 1,p\rcro$, there exists an integer $f(j)\in \lcro 1,p\rcro \setminus \{j\}$ such that $E_{f(j),j} \in W$. \\
We choose a \emph{cycle} for the map $f : \lcro 1,p\rcro \rightarrow \lcro 1,p\rcro$,
i.e.\ a list $(j_1,\dots,j_r)$ of distinct elements of $\lcro 1,p\rcro$ such that $f(j_1)=j_2,\dots,f(j_{r-1})=j_r$
and $f(j_r)=j_1$. The matrix $A:=\underset{k=1}{\overset{r}{\sum}}E_{f(j_k),j_k}$ then belongs to
$W$ although $1$ is an eigenvalue of it (a corresponding eigenvector being $\underset{k=1}{\overset{r}{\sum}} e_{j_k}$).
This is a contradiction, which shows that $W_{e_i}= \{0\}$ for some $i \in \lcro 1,p\rcro$.
\end{proof}

By conjugating $V$ with an appropriate invertible matrix,
we then lose no generality assuming that no matrix of $V$ has $\Vect(e_{n+1})$ as column space and that $Ve_{n+1} \subset \Vect(e_1,\dots,e_n)$
(since $e_{n+1} \not\in Ve_{n+1}$). This means that every matrix of $V$ has a $0$ entry at the $(n+1,n+1)$-spot.

\vskip 2mm
In order to complete the choice of a ``good" basis for $V$, we now turn to the first $n$ vectors $f_1,\dots,f_n$.
The basic idea is to find the projections of $f_1,\dots,f_n$ onto $\Vect(e_1,\dots,e_n)$ and alongside $\Vect(e_{n+1})$
by applying the induction hypothesis to a subspace of $\Mat_n(\K)$ that is deduced from $V$ (the space $V_{ul}$ defined below),
and then apply the induction hypothesis once more to find the projections of $f_1,\dots,f_n$ onto $\Vect(e_{n+1})$ alongside
$\Vect(e_1,\dots,e_n)$.

\vskip 2mm
Consider the subspace $W$ of $V$ consisting of its matrices with zero as last column.
For $M \in W$, write
$$M=\begin{bmatrix}
K(M) & [0]_{n \times 1} \\
L(M) & 0
\end{bmatrix} \quad \text{with $K(M) \in \Mat_n(\K)$ and $L(M) \in \Mat_{1,n}(\K)$,}$$
and set
$$V_{ul}:=K(W)$$
(the subscript ``$ul$" stands for ``upper left").
The rank theorem shows that
$$\dim V=\dim W+\dim (Ve_{n+1}) \quad \text{and} \quad  \dim W=\dim \Ker K+\dim V_{ul}.$$
However, our assumptions mean that $\Ker K=\{0\}$, hence
$$\dim V=\dim V_{ul}+\dim (Ve_{n+1}).$$
Obviously, $V_{ul}$ is a linear subspace of $\Mat_n(\K)$ with a trivial spectrum hence $\dim V_{ul} \leq \binom{n}{2}$.
Moreover $\dim (Ve_{n+1}) \leq n$ since $V$ acts totally intransitively on $\K^{n+1}$.
We deduce that
$$\binom{n+1}{2}=\dim V = \dim V_{ul}+\dim (Ve_{n+1}) \leq \binom{n}{2}+n=\binom{n+1}{2},$$
hence
$$\dim V_{ul}=\binom{n}{2} \quad \text{and} \quad \dim (Ve_{n+1})=n.$$
In this reduced situation, we conclude that:
\begin{enumerate}[1.]
\item $V_{ul}$ is a maximal linear subspace of $\Mat_n(\K)$ with a trivial spectrum.
\item $Ve_{n+1}=\Vect(e_1,\dots,e_n)$.
\end{enumerate}

\vskip 3mm
Applying the induction hypothesis to $V_{ul}$ together with Remark \ref{lowertriangremark} shows that
we may find non-isotropic \emph{lower-triangular} matrices $P_1,\dots,P_r$ such that
$$V_{ul} \simeq P_1\Mata_{n_1}(\K) \vee \cdots \vee P_r\Mata_{n_r}(\K).$$
This shows that, by conjugating $V$ with a well-chosen matrix of the form $\begin{bmatrix}
R & [0]_{n \times 1} \\
[0]_{1 \times n} & 1
\end{bmatrix}$ for some $R \in \GL_n(\K)$, we lose no generality assuming that
$$V_{ul}=P_1\Mata_{n_1}(\K) \vee \cdots \vee P_r\Mata_{n_r}(\K) \quad \text{and} \quad
P_1=\begin{bmatrix}
1 & [0]_{1 \times (n_1-1)} \\
C'_1 & P'_1
\end{bmatrix}$$
for some lower-triangular matrix $P'_1 \in \Mat_{n_1-1}(\K)$ (possibly of size $0$)
and some column matrix $C'_1 \in \Mat_{n_1-1,1}(\K)$.

\begin{Rem}[An important remark on block-diagrams]\label{blockrem}
\emph{From now on, and unless specified otherwise, every matrix $M$ of $V$ will be systematically seen with the following $3 \times 3$ block decomposition:}
$$M=\begin{bmatrix}
? & [?]_{1 \times (n-1)} & ? \\
[?]_{(n-1) \times 1} & [?]_{n-1} & [?]_{(n-1) \times 1} \\
? & [?]_{1 \times (n-1)} & ?
\end{bmatrix}$$
\emph{i.e.\ the four question marks represent single entries, whilst the others represent submatrices with sizes as indicated by the subscript
(where the central subscript $n-1$ denotes a $(n-1)\times (n-1)$ block).}
\end{Rem}
\vskip 2mm
\noindent If $n_1>1$, we set $s:=r$, $(i_1,\dots,i_s):=(n_1-1,n_2,\dots,n_r)$ and $(R_1,\dots,R_s):=(P'_1,P_2,\dots,P_r)$. \\
If $n_1=1$, we set $s:=r-1$, $(i_1,\dots,i_s):=(n_2,\dots,n_r)$ and $(R_1,\dots,R_s):=(P_2,\dots,P_r)$. \\
In any case, we set
$$V_m:=R_1\Mata_{i_1}(\K) \vee \cdots  \vee R_s\Mata_{i_s}(\K)$$
(the subscript ``$m$" stands for ``middle").
\noindent Here are two consequences of the above reductions (with the block decompositions laid out in Remark \ref{blockrem}):
\begin{enumerate}[(i)]
\item For every $L \in \Mat_{1,n-1}(\K)$, the subspace $V$ contains a matrix of the form
$$\begin{bmatrix}
? & L & 0 \\
? & ? & 0 \\
? & ? & 0
\end{bmatrix};$$
\item For every $U \in V_m$, the subspace $V$ contains a matrix of the form
$$\begin{bmatrix}
0 & 0 & 0 \\
0 & U & 0 \\
? & ? & 0
\end{bmatrix}.$$
\end{enumerate}

\begin{proof}[Proof of statement (i)]
Let $L_1 \in \Mat_{1,n_1-1}(\K)$. Then
$$P_1 \times \begin{bmatrix}
0 & L_1 \\
-L_1^T & [0]_{n_1-1}
\end{bmatrix}=\begin{bmatrix}
? & L_1 \\
[?]_{(n_1-1) \times 1} & [?]_{n_1-1}
\end{bmatrix}$$
and $\begin{bmatrix}
0 & L_1 \\
-L_1^T & [0]_{n_1-1}
\end{bmatrix}$ is alternate.
Since $V_{ul}=P_1\Mata_{n_1}(\K) \vee \cdots \vee P_r\Mata_{n_r}(\K)$, we deduce that,
for every $L \in \Mat_{1,n-1}(\K)$, the subspace $V_{ul}$ contains a matrix of the form
$\begin{bmatrix}
? & L  \\
[?]_{(n-1) \times 1} & [?]_{n-1}  \\
\end{bmatrix}$, and the conclusion follows from the definition of $V_{ul}$.
\end{proof}

\begin{proof}[Proof of statement (ii)]
We will only tackle the case $n_1>1$, the case $n_1=1$ being essentially similar (and even simpler).
For every $M \in P_2\Mata_{n_2}(\K) \vee \cdots \vee P_r\Mata_{n_r}(\K)$
and every $N \in \Mat_{n_1-1,n-n_1}(\K)$,
we know that $V_{ul}$ contains the matrix $\begin{bmatrix}
0 & [0]_{1 \times (n_1-1)} & [0]_{1 \times (n-n_1)} \\
[0]_{(n_1-1) \times 1} & [0]_{n_1-1} & N  \\
[0]_{(n-n_1)\times 1} & [0]_{(n-n_1)\times (n_1-1)} & M \\
\end{bmatrix}$.
Let $A \in \Mata_{n_1-1}(\K)$. Then
$$P_1 \times \begin{bmatrix}
0 & [0]_{1 \times (n_1-1)} \\
[0]_{(n_1-1) \times 1} & A
\end{bmatrix}=\begin{bmatrix}
0 & [0]_{1 \times (n_1-1)} \\
[0]_{(n_1-1) \times 1} & P'_1 A
\end{bmatrix}$$
and it follows that $V_{ul}$ contains a matrix of the form
$$\begin{bmatrix}
0 & [0]_{1 \times (n_1-1)} & [0]_{1 \times (n-n_1)} \\
[0]_{(n_1-1) \times 1} & P'_1 A & [0]_{(n_1-1) \times (n-n_1)} \\
[0]_{(n-n_1) \times 1} & [0]_{(n-n_1) \times (n_1-1)} & [0]_{(n-n_1) \times (n-n_1)}
\end{bmatrix}.$$
With the respective definitions of $V_m$ and $V_{ul}$, point (ii) follows easily.
\end{proof}

\vskip 2mm
Let now $C \in \Mat_{n-1,1}(\K)$.
Since $Ve_{n+1}=\Vect(e_1,\dots,e_n)$, we know that $V$ contains a matrix of the form
$$\begin{bmatrix}
? & ? & 0 \\
? & ? & C \\
? & ? & 0
\end{bmatrix}.$$
Adding an appropriate matrix given by statement (i), and remembering that $0$ is the only possible eigenvalue for a matrix in
$V$, we deduce:
\begin{itemize}
\item[(iii)] $V$ contains a matrix of the form
$$\begin{bmatrix}
0 & 0 & 0 \\
? & ? & C \\
? & ? & 0
\end{bmatrix}.$$
\end{itemize}
Denote now by $V'$ the subspace of $V$ consisting of its matrices with zero as first row.
For $M \in V'$, write
$$M=\begin{bmatrix}
0 & [0]_{1\times n} \\
[?]_{n \times 1}  & \calJ(M)
\end{bmatrix} \quad \text{with $\calJ(M) \in \Mat_n(\K)$,}$$
and set
$$V_{lr}:=\calJ(V')$$
(the subscript ``$lr$" stands for ``lower right").
Note that the subspace $V_{lr}$ of $\Mat_n(\K)$ has a trivial spectrum and that it
contains:
\begin{itemize}
\item[(a)] A matrix of the form $\begin{bmatrix}
U & [0]_{(n-1) \times 1} \\
[?]_{1 \times (n-1)} & 0
\end{bmatrix}$ for every $U \in V_m$ (by statement (ii));
\item[(b)] A matrix of the form
$\begin{bmatrix}
[?]_{n-1} & C \\
[?]_{1 \times (n-1)} & 0
\end{bmatrix}$ for every $C \in \Mat_{n-1,1}(\K)$ (by statement (iii)).
\end{itemize}
Since $\dim V_m=\binom{n-1}{2}$, we deduce that $\dim V_{lr} \geq \binom{n-1}{2}+(n-1)=\binom{n}{2}$.
However $\dim V_{lr} \leq \binom{n}{2}$ since $V_{lr}$ has a trivial spectrum.
It thus follows from statements (a) and (b) that:
\begin{itemize}
\item[(c)] $V_{lr}$ contains, for every $U \in V_m$, a \emph{unique} matrix of the form $\begin{bmatrix}
U & [0]_{(n-1) \times 1} \\
? & 0
\end{bmatrix}$;
\item[(d)] Every matrix of $V_{lr}$ with zero as last column has the form $\begin{bmatrix}
U & [0]_{(n-1) \times 1} \\
? & 0
\end{bmatrix}$ for some $U \in V_m$.
\end{itemize}

\vskip 2mm
A key point now is that $V_{lr}$ is a maximal linear subspace of $\Mat_n(\K)$ with a trivial spectrum.
One may thus be tempted to apply the induction hypothesis to $V_{lr}$.
However, the problem is that using a new change of basis blindingly
risks destroying the previous reduced form of $V_{ul}$! As we shall now see,
the fact that $V_m$ is already reduced forces $V_{lr}$ to be already in the reduced form of Theorem \ref{classtrivialspectrum}
(i.e.\ no further change of basis is necessary at this point).

\begin{claim}\label{compatclaim}
The subspace $V_{lr}$ has a ``roughly reduced" shape i.e.\ there exists an integer $q \geq 1$, a non-isotropic matrix $Q \in \GL_q(\K)$
and a maximal subspace $W$ of $\Mat_{n-q}(\K)$ with a trivial spectrum such that
$$V_{lr}=W \vee Q\Mata_q(\K).$$
\end{claim}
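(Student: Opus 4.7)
The plan is to apply the induction hypothesis to $V_{lr}$ and then use the already-reduced shape of $V_m \subset V_{lr}$ together with the trivial-spectrum condition to force any $V_{lr}$-stable subspace of $\K^n$ to be a coordinate subspace. First, since $V_{lr}$ is a maximal linear subspace of $\Mat_n(\K)$ with trivial spectrum and $n<n+1$, the induction hypothesis (combining the inductive Theorem~\ref{irreducible} for sizes at most $n$ with the reducibility proposition of the introduction) yields
\[
V_{lr} \simeq T_1\Mata_{j_1}(\K)\vee\cdots\vee T_t\Mata_{j_t}(\K)
\]
for some $t\geq 1$ and non-isotropic matrices $T_1,\ldots,T_t$. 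In the irreducible case $t=1$, the opening identity of Section~\ref{espacesPA}, namely $R(P\Mata_n(\K))R^{-1}=(RPR^T)\Mata_n(\K)$, rewrites this similarity as an \emph{equality} $V_{lr}=Q\Mata_n(\K)$ with non-isotropic $Q$, settling the claim with $q=n$ and $W$ empty. Henceforth I would assume $t\geq 2$, so that $V_{lr}$ admits a proper nontrivial stable subspace $F\subset \K^n$.

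The crux is to show $F\subset \K^{n-1}\times\{0\}$. Assume the opposite, so that $F$ contains some $(\bar X_0,1)$ with nonzero last entry. Using point (b) of the setup (namely, that the last column of some matrix of $V_{lr}$ can be any prescribed vector of $\K^{n-1}\times\{0\}$) together with the $V_m^*$-action on $(\bar X_0,1)$, one produces enough vectors of the form $M(\bar X_0,1)$ inside $F$ to force $F$ either to be all of $\K^n$ (contradicting properness) or to be a hyperplane of $\K^n$. In the hyperplane case, the trivial-spectrum condition on the $1$-dimensional quotient $\K^n/F$ forces every matrix of $V_{lr}$ to send $\K^n$ into $F$; combined with the identity $V_{lr}e_n = \K^{n-1}\times\{0\}$, this yields $F = \K^{n-1}\times\{0\}$, contradicting $(\bar X_0, 1)\in F$.

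Once $F\subset \K^{n-1}\times\{0\}$ is established, the inclusion $V_m^* \subset V_{lr}$ and the explicit form $\begin{bmatrix} U & 0 \\ L(U) & 0 \end{bmatrix}$ of $V_m^*$-matrices force the projection of $F$ onto the first $n-1$ coordinates to be $V_m$-stable. Since $V_m=R_1\Mata_{i_1}(\K)\vee\cdots\vee R_s\Mata_{i_s}(\K)$ is a $\vee$-join of irreducibles, its stable subspaces of $\K^{n-1}$ reduce to the flag $\{0\}\subsetneq\K^{i_1}\subsetneq\cdots\subsetneq\K^{n-1}$, hence $F=\K^{n-q}\times\{0\}$ for some $q\geq 1$. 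This coordinate stability splits $V_{lr}$ in block form as $W\vee W'$ with $W\subset\Mat_{n-q}(\K)$ and $W'\subset\Mat_q(\K)$ both having trivial spectrum, and the dimension count saturating $\binom{n}{2}=\binom{n-q}{2}+q(n-q)+\binom{q}{2}$ makes both $W$ and $W'$ maximal. Choosing $F$ as the codim-$j_t$ element of the intrinsic flag so that $W'$ is irreducible, and invoking Theorem~\ref{irreducible} for size $q\leq n$, one gets $W'\simeq T_t\Mata_q(\K)$; the same congruence identity as in the $t=1$ case then yields $W'=Q\Mata_q(\K)$ with non-isotropic $Q$, proving the claim.

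The main obstacle is the dichotomy argument in the second paragraph, namely showing that a proper nontrivial $V_{lr}$-stable subspace cannot contain a vector with nonzero last entry. This requires carefully tracking how the $V_m^*$-orbit of $(\bar X_0,1)$ and the images under a section $\sigma:\K^{n-1}\to V_{lr}$ of the quotient $V_{lr}/V_m^*$ together fill out enough of $F$ to trigger the dichotomy, and handling potential degeneracies where the expected dimension of that image drops because of alignment between the last column and the top-left block of the section.
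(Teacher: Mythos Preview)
Your approach is genuinely different from the paper's. Instead of tracking the conjugating matrix $P$ for which $PV_{lr}P^{-1}=W'\vee Q'\Mata_q(\K)$ and proving $P\in\GL_{n-q}(\K)\vee\GL_q(\K)$, you attempt to show directly that the intrinsic flag of $V_{lr}$-stable subspaces consists of coordinate subspaces. The third paragraph is sound: once a proper nontrivial stable $F$ is known to lie in $\K^{n-1}\times\{0\}$, the action of the matrices $\begin{bmatrix}U&0\\ \ast&0\end{bmatrix}$ (your $V_m^*$) does force the projection $\bar F\subset\K^{n-1}$ into the $V_m$-flag, and picking $F=G_{t-1}$ then gives an irreducible quotient block.

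The real gap is the dichotomy of the second paragraph, and it is not a technicality but the heart of the claim. The $V_{lr}$-stable subspaces form a chain $\{0\}\subsetneq G_1\subsetneq\cdots\subsetneq G_{t-1}\subsetneq\K^n$ with $\dim G_{t-1}=n-j_t$. When $j_t\geq 2$, \emph{no} proper stable subspace is a hyperplane, so the only way your dichotomy can fire is via ``$F=\K^n$''. For that you would need $\dim V_{lr}(\bar X_0,1)^T=n-1$ for every $\bar X_0$; but by the orbit-dimension description in the proof of Proposition~\ref{uniquenessproposition1}, this is equivalent to $(\bar X_0,1)\notin G_{t-1}$, i.e.\ exactly to $G_{t-1}\subset\K^{n-1}\times\{0\}$, which is what you set out to prove. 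The tools you invoke do not break this circularity: nothing prevents a section $\sigma(C)=\begin{bmatrix}N_C&C\\L_C&0\end{bmatrix}$ from annihilating $(\bar X_0,1)$ for $C$ in a nontrivial subspace (one gets $N_{C}\bar X_0=-C$ and $L_C\bar X_0=0$, which carries no contradiction with the trivial-spectrum hypothesis), and the $V_m^*$-orbit of $(\bar X_0,1)$ only contributes vectors whose first $n-1$ coordinates lie in $V_m\bar X_0$, of dimension at most $n-2$.

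The paper fills this gap by a different invariant. After normalizing $Pe_n=e_n$, it sets $H:=\{M\in V_{lr}:Me_n=0\}$ (precisely your $V_m^*$) and compares the function $x\mapsto\dim Hx$ on $V_{lr}e_n$ (case $i_s>1$) or $x\mapsto\dim(Hx\cap V_{lr}e_n)$ (case $i_s=1$) with its counterpart for $PHP^{-1}$ on $(PV_{lr}P^{-1})e_n$. These computations, which rely on statements (c)--(d) and on the already-reduced shape of $V_m$, are what actually pin the stable flag to the coordinate flag; your outline offers no substitute for them.
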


\begin{proof}
Applying the induction hypothesis to $V_{lr}$, we recover a matrix
$P\in \GL_n(\K)$, a non-isotropic matrix $Q' \in \GL_q(\K)$ (possibly with $q=n$) and a maximal subspace $W'$ of 
$\Mat_{n-q}(\K)$ with a trivial spectrum such that
$$PV_{lr}P^{-1}=W'\vee Q' \Mata_q(\K).$$
Note, using statement (b), that $\dim(V_{lr}e_n)=n-1$ whereas $\dim(PV_{lr}P^{-1}x)<n-1$ for every $x \in \Vect(e_1,\dots,e_{n-q})$
(since $W'$ acts totally intransitively on $\K^{n-q}$).
Hence $Pe_n \not\in \Vect(e_1,\dots,e_{n-q})$. Multiplying $P$ with a well-chosen matrix of
$\GL_{n-q}(\K)\vee \GL_q(\K)$, we lose no generality assuming that $Pe_n=e_n$.

\vskip 2mm
Assume first that $q=1$. Then $V_{lr} e_n=\Vect(e_1,\dots,e_{n-1})=(PV_{lr}P^{-1})e_n$ whilst
$PV_{lr}P^{-1}e_n=P(V_{lr}e_n)$, which shows that $P$ stabilizes $\Vect(e_1,\dots,e_{n-1})$.
Therefore $P \in \GL_{n-1}(\K) \vee \{1\}$ and
$V_{lr}=W \vee \Mata_1(\K)$ for some maximal linear subspace $W$ of $\Mat_{n-1}(\K)$ with a trivial spectrum.

\vskip 2mm
Assume, for the rest of the proof, that $q>1$. Our aim is to prove that $P \in \GL_{n-q}(\K) \vee \GL_q(\K)$, and it will follow that
$V_{lr}=W \vee Q\Mata_q(\K)$ for some maximal linear subspace $W$ of $\Mat_{n-q}(\K)$
with a trivial spectrum and some non-isotropic matrix $Q \in \GL_q(\K)$. \\
Set $$H:=\bigl\{M \in V_{lr} : \; Me_n=0\bigr\}$$
i.e.\ $H$ is the set of all matrices of
$V_{lr}$ with $0$ as last column.
Notice that $PHP^{-1}=\bigl\{M \in PV_{lr}P^{-1} : \; Me_n=0\bigr\}$ since $Pe_n=e_n$.
Notice also that
$$\Vect(e_1,\dots,e_{n-1-i_s}) \subset \Vect(e_1,\dots,e_{n-1})=V_{lr}e_n$$
(this uses statement (b) and the fact that
$V_{lr} e_n \neq \K^n$)
and that
$$\Vect(e_1,\dots,e_{n-q}) \subset (PV_{lr}P^{-1})e_n\; .$$

\begin{itemize}
\item \emph{Case 1: $i_s>1$.}
\begin{itemize}
\item We first claim that
\begin{equation}\label{importantdimclaim1}
\forall x \in V_{lr}e_n, \quad \dim Hx<n-2 \; \Leftrightarrow \; x \in \Vect(e_1,\dots,e_{n-1-i_s}).
\end{equation}
Indeed, let $x \in \Vect(e_1,\dots,e_{n-1})$ seen as a vector of $\K^{n-1}$ with the
canonical identification $\K^{n-1} \simeq \K^{n-1} \times \{0\} \subset \K^n$.
By statements (c) and (d), one has
$$\dim V_m x \leq \dim Hx \leq 1+\dim V_m x.$$
If $x \in \Vect(e_1,\dots,e_{n-1-i_s})$, then the line of reasoning from the proof of
Proposition \ref{uniquenessproposition1} yields $\dim V_m x \leq n-i_s-2$ and hence $\dim Hx \leq n-i_s-1<n-2$;
otherwise $\dim V_m x=n-2$ and hence $\dim Hx \geq n-2$. \\
\item Moreover, we claim that
\begin{equation}\label{importantdimclaim2}
\forall x \in (PV_{lr}P^{-1})e_n, \; \dim (PHP^{-1}x)<n-2 \; \Leftrightarrow \; x \in \Vect(e_1,\dots,e_{n-q}).
\end{equation}
The implication $\Leftarrow$ follows from $PV_{lr}P^{-1}=W'\vee Q' \Mata_q(\K)$
since $W'$ acts totally intransitively on $\K^{n-q}$ and $q>1$. \\
For the converse implication, notice first that the equality $PV_{lr}P^{-1}=W' \vee Q'\Mata_q(\K)$ yields
$(PV_{lr}P^{-1})e_n=\Vect(e_1,\dots,e_{n-q})\oplus G$ for some $(q-1)$-dimensional subspace $G$ of
$\Vect(e_{n-q+1},\dots,e_n)$ which does not contain $e_n$
(note that $(PV_{lr}P^{-1})e_n$ cannot contain $e_n$ since $PV_{lr}P^{-1}$ has a trivial spectrum).
Consider a vector $x \in G \setminus \{0\}$.
The subspace $PHP^{-1}$ contains, for every $A \in \Mata_{q-1}(\K)$, and every $B \in \Mat_{n-q,q}(\K)$ with zero as last column,
the matrix
$$\begin{bmatrix}
[0]_{n-q} & B \\
[0]_{q \times (n-q)} & C
\end{bmatrix} \quad \text{where
$C=Q' \times \begin{bmatrix}
A & [0]_{(q-1) \times 1} \\
[0]_{1 \times (q-1)} & 0
\end{bmatrix}$.}$$
Since $x$ belongs to $\Vect(e_{n-q+1},\dots,e_n)$ and is linearly independent from
$e_n$, it easily follows that $\dim (PHP^{-1})x \geq  n-2$. \\
Let now $x \in (PV_{lr}P^{-1})e_n \setminus \Vect(e_1,\dots,e_{n-q})$.
Then we have a decomposition $x=z+y$ with $z \in \Vect(e_1,\dots,e_{n-q})$ and $y\in G \setminus \{0\}$. \\
Obviously, there exists a non-singular matrix $R \in \{I_{n-q}\} \vee \{I_q\}$ such that $Rx=y$.
Replacing $P$ with $RP$, we thus reduce the situation to the one where $x \in G \setminus \{0\}$,
which we have treated before. Implication $\Rightarrow$ in statement \eqref{importantdimclaim2}
follows.
\end{itemize}
Since $X \mapsto PX$ is linear, $Pe_n=e_n$,  $\Vect(e_1,\dots,e_{n-q}) \subset (PV_{lr}P^{-1})e_n$,
and $\Vect(e_1,\dots,e_{n-1-i_s}) \subset V_{lr} e_n$, we deduce from statements \eqref{importantdimclaim1}
and \eqref{importantdimclaim2} that $X \mapsto PX$ induces an isomorphism from
$\Vect(e_1,\dots,e_{n-1-i_s})$ to $\Vect(e_1,\dots,e_{n-q})$, hence $i_s=q-1$ and $P \in \GL_{n-q}(\K) \vee \GL_q(\K)$.

\vskip 2mm
\item \emph{Case 2: $i_s=1$.}
\begin{itemize}
\item Notice first that $\Vect(e_1,\dots,e_{n-1})=V_{lr}e_n$ and
\begin{equation}\label{importantdimclaim3}
\forall x \in V_{lr}e_n, \; \dim(Hx \cap V_{lr}e_n)<n-2 \quad \text{if $x \in \Vect(e_1,\dots,e_{n-2})$.}
\end{equation}
Indeed, for every $x \in  \Vect(e_1,\dots,e_{n-2})$, statements (c) and (d) show that
$\dim(Hx \cap V_{lr}e_n) \leq \dim(V_m x)$ (where $x$ is naturally seen as a vector of $\K^{n-1}$),
and the definition of $V_m$ shows, since $i_s=1$, that $\dim (V_m x)<n-2$.
\item On the other hand, we claim that
\begin{equation}\label{importantdimclaim4}
\forall x \in (PV_{lr}P^{-1})e_n, \; \dim\bigl((PHP^{-1})x \cap (PV_{lr}P^{-1})e_n\bigr)<n-2 \; \Leftrightarrow \; x \in \Vect(e_1,\dots,e_{n-q}).
\end{equation}
Indeed, for any $x \in \Vect(e_1,\dots,e_{n-q})$, one has
$$\dim\bigl((PHP^{-1})x \cap (PV_{lr}P^{-1})e_n\bigr) \leq \dim (PV_{lr}P^{-1})x \leq n-q-1<n-2.$$
Conversely, let $x \in (PV_{lr}P^{-1})e_n \setminus \Vect(e_1,\dots,e_{n-q})$.
Note first that $(PHP^{-1})x \subset (PV_{lr}P^{-1})e_n$. In order to see this, we naturally identify
$\K^n$ with $\K^{n-q} \oplus \K^q$: the identity $PV_{lr}P^{-1}=W' \vee Q'\Mata_q(\K)$ yields
$(PV_{lr}P^{-1})e_n=\K^{n-q} \times \bigl[Q'(\K^{q-1} \times \{0\})\bigr]$ whilst,
for every $M \in PHP^{-1}$, the column space of $M$ is included in $\K^{n-q} \times Q'\im(N)$,
where $N=\begin{bmatrix}
A_M & [0]_{(q-1) \times 1} \\
[0]_{1 \times (q-1)} & 0
\end{bmatrix}$ for some $A_M \in \Mata_{q-1}(\K)$;
this shows that $\im M \subset (PV_{lr}P^{-1})e_n$ for every $M \in PHP^{-1}$. \\
With the same arguments as in the proof of statement \eqref{importantdimclaim2}, one may prove that
$\dim (PHP^{-1})x=n-2$, and hence $\dim \bigl((PHP^{-1})x \cap (PV_{lr}P^{-1})e_n\bigr)=\dim (PHP^{-1})x=n-2$.
Therefore statement \eqref{importantdimclaim4} is established.
\end{itemize}
From statements \eqref{importantdimclaim3} and \eqref{importantdimclaim4},
we deduce that the linear injection $X \mapsto PX$ maps $\Vect(e_1,\dots,e_{n-2})$ into $\Vect(e_1,\dots,e_{n-q})$, which shows that
$q=2$, $i_s=1=q-1$ and $P \in \GL_{n-q}(\K) \vee \GL_q(\K)$. This finishes our proof.
\end{itemize}
\end{proof}

Now that we know that $V_{lr}$ is ``roughly reduced", we may use the shape of $V_m$ to better grasp the one of $V_{lr}$.

\vskip 2mm
Take $W$, $q$ and $Q$ as in Claim \ref{compatclaim}. If $q=1$, then obviously $W=V_m$. \\
Assume now that $q>1$ and split $Q=\begin{bmatrix}
Q_1 & [?]_{(q-1) \times 1} \\
[?]_{1 \times (q-1)} & ?
\end{bmatrix}$ with $Q_1 \in \Mat_{q-1}(\K)$. Then $Q_1$ is still non-isotropic
and statement (d) shows that $V_m$ contains $W \vee Q_1\Mata_{q-1}(\K)$, and hence
$V_m=W \vee Q_1\Mata_{q-1}(\K)$ since the dimensions are equal on both sides.
By applying the induction hypothesis to $W$
and by using the same arguments as in the proof of Proposition \ref{uniquenessproposition1},
we deduce that $W=R_1\Mata_{i_1}(\K) \vee \cdots \vee R_{s-1}\Mata_{i_{s-1}}(\K)$ and $Q_1\Mata_{q-1}(\K)=R_s\Mata_{i_s}(\K)$. \\
Therefore
$$V_{lr}=\begin{cases}
R_1\Mata_{i_1}(\K) \vee \cdots  \vee R_s\Mata_{i_s}(\K) \vee  \Mata_1(\K) \quad & \text{if $q=1$} \\
R_1\Mata_{i_1}(\K) \vee \cdots  \vee R_{s-1}\Mata_{i_{s-1}}(\K) \vee Q\Mata_q(\K) \quad&  \text{if $q>1$}.
\end{cases}$$
Assume again that $q>1$. Then $Q$ need not be lower-triangular, so we have to reduce the situation a little further. \\
Since $Ve_{n+1}=\Vect(e_1,\dots,e_n)$, we find that
$Q\Mata_q(\K)e_q=\Vect(e_1,\dots,e_{q-1})$ which shows that $Q$ stabilizes $\Vect(e_1,\dots,e_{q-1})$, i.e.\
$Q=\begin{bmatrix}
T_0 & [?]_{(q-1)\times 1} \\
[0]_{1 \times (q-1)} & \alpha
\end{bmatrix}$ for some $T_0 \in \GL_{q-1}(\K)$ and some $\alpha \in \K \setminus \{0\}$. \\
Note, since $Q$ is non-singular, that a matrix of the form $M=QA$, with $A \in \Mata_q(\K)$, has zero as last column if
and only if $A$ has zero as last column. It then follows from the shape of $V_m$ that
$T_0\Mata_{q-1}(\K)=R_s \Mata_{i_s}(\K)$. Therefore $(R_s^{-1}T_0)\Mata_{q-1}(\K)=\Mata_{q-1}(\K)$, and we deduce from
Lemma \ref{centralizer} that $T_0$ is a scalar multiple of $R_s$.
Since we may replace $Q$ with a scalar multiple of itself, we lose no generality assuming that
$T_0=R_s$. \\
Finally we define
$T_1:=\begin{bmatrix}
I_{q-1} & C' \\
[0]_{1 \times (q-1)} & 1
\end{bmatrix} \in \GL_q(\K)$, where $C':=\dfrac{C}{\alpha}$, so that
$Q':=(T_1)^TQT_1$ is lower-triangular; we replace
$V$ with $RVR^{-1}$ for
$R:=\begin{bmatrix}
I_{n+1-q} & [0]_{(n+1-q) \times q} \\
[0]_{q \times (n+1-q)} & T_1^T
\end{bmatrix}$.

\vskip 3mm
\noindent For the sake of convenience (and symmetry), we now set
$$P:=P_1 \quad \text{and} \quad p:=n_1.$$
Let us see how the situation looks like after all those reductions:
\begin{enumerate}[(i)]
\item We still have
$$V_{ul}=P\Mata_p(\K) \vee P_2\Mata_{n_2}(\K) \vee \cdots \vee P_r\Mata_{n_r}(\K)$$
and
$$V_m=R_1\Mata_{i_1}(\K) \vee \cdots \vee R_s\Mata_{i_s}(\K)$$
with the above notations (nothing has changed there). \\
Recall that $(i_1,\dots,i_s)=(n_2,\dots,n_r)$ if $p=1$, otherwise
$(i_1,\dots,i_s)=(n_1-1,n_2,\dots,n_r)$.

\item Either $q=1$ and then
$$V_{lr}=R_1\Mata_{i_1}(\K) \vee \cdots \vee R_s\Mata_{i_s}(\K)\vee Q\Mata_1(\K) \quad \text{with $Q=1$,}$$
or $q>1$ and then
$$V_{lr}=R_1\Mata_{i_1}(\K) \vee \cdots \vee R_{s-1}\Mata_{i_{s-1}}(\K)\vee Q\Mata_q(\K)$$
and
$$Q=\begin{bmatrix}
R_s & [0]_{i_s \times 1} \\
L_1 & \alpha
\end{bmatrix} \quad \text{with $\alpha \in \K \setminus \{0\}$ and $L_1 \in \Mat_{1,q-1}(\K)$}.$$
We set $\alpha:=1$ if $q=1$.
\item Recall finally that if $p>1$, then
$P=\begin{bmatrix}
1 & [0]_{1 \times (p-1)} \\
C_1 & R_1
\end{bmatrix}$ for some $C_1 \in \Mat_{p-1,1}(\K)$.

\item No matrix of $V$ has $\Vect(e_{n+1})$ as column space (no change there).
\end{enumerate}
However, one important thing has changed: if $q>1$, we no longer have $Ve_{n+1}=\Vect(e_1,\dots,e_n)$,
rather $Ve_{n+1}=\Vect(e_1,\dots,e_{n+1-q})\oplus H$ for some linear hyperplane
$H$ of $\Vect(e_{n+2-q},\dots,e_{n+1})$ which does not contain $e_{n+1}$. We still have $e_1 \in Ve_{n+1}$, nevertheless.
Set finally
$$Z:=\begin{bmatrix}
R_1 & & (0) \\
& \ddots & \\
(0) & & R_s
\end{bmatrix} \in \GL_{n-1}(\K).$$
From there, $V$ will remain essentially fixed. We will prove separately:
\begin{itemize}
\item That the case
$p=n=q$ (i.e.\ $V_{ul}$ and $V_{lr}$ are \textbf{glued}) leads to the equivalence of $V$ with $\Mata_{n+1}(\K)$;
\item That the case $p \neq n$ or $q \neq n$ (i.e.\ $V_{ul}$ and $V_{lr}$ are \textbf{unglued}) leads to the reducibility of $V$.
\end{itemize}
Prior to studying the two cases separately, we continue with general considerations that apply to both of them.

\subsection{Special types of matrices in $V$}

With the matrices $L_1$ and $C_1$ from the previous paragraph\footnote{Setting $L_1:=0$ if $q=1$, and $C_1:=0$ if $p=1$.}, set
$$\widetilde{L_1}:=\begin{bmatrix}
[0]_{1 \times (n-q)} & L_1
\end{bmatrix}\in \Mat_{1,n-1}(\K) \quad
\text{and} \quad
\widetilde{C_1}:=\begin{bmatrix}
C_1 \\
[0]_{(n-p) \times 1}
\end{bmatrix}\in \Mat_{n-1,1}(\K).$$

\begin{Not}
For an arbitrary $L \in \Mat_{1,n-1}(\K)$, we define
$\overline{L}$ as the matrix of $\Mat_{1,n-1}(\K)$
with the same first $p-1$ entries as $L$ and all the other ones equal to zero. \\
For an arbitrary $C \in \Mat_{n-1,1}(\K)$, we define
$\overline{C}$ as the matrix of $\Mat_{n-1,1}(\K)$
with the same last $q-1$ entries as $C$ and all the other ones equal to zero.
\end{Not}

\noindent
Using the respective shapes of $V_{ul}$, $V_m$ and $V_{lr}$, we now find important classes of matrices in $V$, together with an isolated matrix.
First of all, taking arbitrary row matrices
$L_0 \in \Mat_{1,p-1}(\K)$ and $L'_0 \in \Mat_{1,n-p}(\K)$,
we know that $V_{ul}$ contains a matrix of the form
$\begin{bmatrix}
P\,A & N \\
[0]_{(n-p) \times p} & [0]_{n-p}
\end{bmatrix}$ with $A=\begin{bmatrix}
0 & L_0 \\
-L_0^T & [0]_{p-1}
\end{bmatrix}$ and $N=\begin{bmatrix}
L'_0 \\
[0]_{(p-1) \times (n-p)}
\end{bmatrix}$.
Therefore, using the block decomposition of matrices of $V$ explained in Remark \ref{blockrem}, we find that:
\begin{itemize}
\item For every $L \in \Mat_{1,n-1}(\K)$, there is a unique\footnote{As the map $K$ from the beginning of
Section \ref{setupsection} is one-to-one.} $A_L \in V$
of the form
$$A_L=\begin{bmatrix}
0 & L & 0 \\
-Z\overline{L}^T & \widetilde{C_1}\overline{L} & 0 \\
f(L) & \varphi(L) & 0
\end{bmatrix},$$
and $f : \Mat_{1,n-1}(\K)\rightarrow \K$ and $\varphi : \Mat_{1,n-1}(\K)\rightarrow \Mat_{1,n-1}(\K)$
are linear maps.
\end{itemize}

Let $U \in V_m$, which we write as a block-triangular matrix $U=\begin{bmatrix}
[?]_{n-1-i_s} & [?]_{n-1-i_s,i_s} \\
[0]_{i_s,n-1-i_s} & R_s A
\end{bmatrix}$ with $A \in \Mata_s(\K)$.
With the respective structures of $V_{ul}$ and $V_m$ and the fact that $V$ contains no matrix with column space $\Vect(e_{n+1})$,
we know that $V$ contains a unique matrix of the form
$\begin{bmatrix}
0 & 0 & 0 \\
0 & U & 0  \\
? & ? & 0
\end{bmatrix}$. Since $Q \times \begin{bmatrix}
A & [0]_{(i_s-1) \times 1} \\
[0]_{1 \times (i_s-1)} & 0
\end{bmatrix}=\begin{bmatrix}
R_s A & [0]_{(i_s-1) \times 1} \\
L_1R_s^{-1}(R_sA) & 0
\end{bmatrix}$, the structure of $V_{lr}$ yields that the above matrix of
$V$ has $\begin{bmatrix}
? & \widetilde{L_1}Z^{-1}U & 0
\end{bmatrix}$ as last row. Therefore:
\begin{itemize}
\item For every $U \in V_m$, there is a unique $E_U \in V$ of the form
$$E_U=\begin{bmatrix}
0 & 0 & 0 \\
0 & U & 0 \\
h(U) & \widetilde{L_1}Z^{-1}U & 0
\end{bmatrix}.$$
\end{itemize}
\noindent We know that some matrix of $V$ has $\begin{bmatrix}
1 & 0 & \cdots & 0
\end{bmatrix}^T$ as last column. Summing it with a well-chosen matrix of type $A_L$, we deduce:
\begin{itemize}
\item The subspace $V$ contains a matrix
$$J=\begin{bmatrix}
a & 0 & 1 \\
C'_1 & ? & 0 \\
b & L'_1 & 0
\end{bmatrix} \quad \text{with $(a,b)\in \K^2$ and $(L'_1,C'_1) \in \Mat_{1,n-1}(\K)\times \Mat_{n-1,1}(\K)$.}$$
\end{itemize}

With the above matrices $A_L$ and $J$, we find that $\dim(e_1^TV) \geq n$.
We already knew that $\dim V=\binom{n+1}{2}$ and $\dim V_{lr}=\binom{n}{2}$, hence the rank theorem shows that the map
$\calJ$ from Section \ref{setupsection} yields an isomorphism from the subspace of all matrices of
$V$ with zero as first row to $V_{lr}$. Using the structure of $V_{lr}$ with the same method
as in the definition of the $A_L$ matrices, we thus find one last important class of matrices in $V$:
\begin{itemize}
\item For every $C \in \Mat_{n-1,1}(\K)$, there is a unique $B_C \in V$
of the form
$$B_C=\begin{bmatrix}
0 & 0 & 0 \\
\psi(C) & 0 & C \\
g(C) & -\alpha \,\overline{C}^T (Z^{-1})^T & \widetilde{L_1}Z^{-1}C
\end{bmatrix}$$
and $g : \Mat_{n-1,1}(\K)\rightarrow \K$ and $\psi : \Mat_{n-1,1}(\K)\rightarrow \Mat_{n-1,1}(\K)$
are linear maps.
\end{itemize}

\begin{Rem}
The above matrices span $V$: a straightforward computation shows
indeed that the linear subspaces
$\bigl\{A_L \mid L \in \Mat_{1,n-1}(\K)\bigr\}$,
$\bigl\{B_C \mid C \in \Mat_{n-1,1}(\K)\bigr\}$,
$\bigl\{E_U \mid U \in V_m\bigr\}$ and $\Vect(J)$ are
independent, and the sum of their dimensions is
$(n-1)+(n-1)+\binom{n-1}{2}+1=\binom{n+1}{2}=\dim V$.
\end{Rem}

\vskip 2mm
From now on, our main task is to refine our understanding of the matrices of the types $A_L$, $B_C$, $E_U$ and $J$:
the basic strategy is to form well-chosen linear combinations of those special matrices and
use the fact that none of them may have a non-zero eigenvalue. Most of the time, we will simply apply the
fact that both $V$ and $V^T$ act totally intransitively on $\K^{n+1}$.
Let us start by considering the maps $\varphi$ and $\psi$ in the $A_L$ and $B_C$ matrices.

\begin{claim}
The maps $\varphi$ and $\psi$ are scalar multiples of the identity.
\end{claim}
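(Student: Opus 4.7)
The aim of this claim is to show that $\varphi$ and $\psi$ are scalar multiples of the identity. These two statements are symmetric via the equally valid trivial spectrum of $V^T$, so I focus on $\varphi$; for $\psi$, one applies the argument below to $V^T$, where $B_C^T \in V^T$ plays the role that $A_L$ plays in $V$.

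A linear endomorphism of a finite-dimensional vector space is a scalar multiple of the identity if and only if it maps every line into itself. For $\varphi$, this amounts to saying that whenever $L \in \Mat_{1, n-1}(\K)$ and $Y \in \K^{n-1}$ satisfy $LY = 0$, one must have $\varphi(L) Y = 0$. Suppose, for a contradiction, there exist $L_0 \in \Mat_{1, n-1}(\K)$ and $Y_0 \in \K^{n-1} \setminus \{0\}$ with $L_0 Y_0 = 0$ and $\varphi(L_0) Y_0 \neq 0$. Setting $X := (0, Y_0, 0)^T \in \K^{n+1}$ and using the shape of the $A_L$ matrices, $A_{L_0} X = (0, \widetilde{C_1}(\overline{L_0} Y_0), \varphi(L_0) Y_0)^T$. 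The strategy will be to assemble a matrix $M \in V$ satisfying $MX = \mu X$ for some $\mu \in \K \setminus \{0\}$, contradicting the trivial spectrum of $V$.

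I will look for $M$ of the form $M := A_{L_0} + E_U + \gamma B_C + \delta J$ with $U \in V_m$, $C \in \Mat_{n-1, 1}(\K)$, and scalars $\gamma, \delta$. Using the explicit actions of $A_{L_0}, E_U, B_C, J$ on $X$ (computed in the text), the condition $MX = \mu X$ reduces to
\begin{itemize}
\item the middle-block equation $\widetilde{C_1}(\overline{L_0} Y_0) + U Y_0 + \delta M_J Y_0 = \mu Y_0$,
\item and the last-coordinate equation $\varphi(L_0) Y_0 + \widetilde{L_1} Z^{-1}(U Y_0) - \gamma \alpha \overline{C}^T (Z^{-1})^T Y_0 + \delta L_1' Y_0 = 0$,
\end{itemize}
the first coordinate of $MX = \mu X$ being automatic. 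To solve the middle-block equation, the induction hypothesis gives $V_m = R_1 \Mata_{i_1}(\K) \vee \cdots \vee R_s \Mata_{i_s}(\K)$, and Lemma \ref{action1} describes the image of each $\Mata_{i_k}(\K)$ on the relevant block of $Y_0$; this yields an explicit description of $V_m Y_0$, and one shows that for a suitable $\mu$ the vector $\mu Y_0 - \widetilde{C_1}(\overline{L_0} Y_0)$ lies in $V_m Y_0 + \K M_J Y_0$, thus determining $U$ and $\delta$. For the last-coordinate equation, the linear form $C \mapsto \alpha \overline{C}^T (Z^{-1})^T Y_0$ on $\Mat_{n-1, 1}(\K)$ is nontrivial (since $Q$ is non-isotropic and $\alpha \neq 0$), so it can be solved for a suitable pair $(\gamma, C)$. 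The resulting $M$ then produces the desired nontrivial eigenvector, and the contradiction yields $\varphi(L) Y = 0$.

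The main obstacle will be ensuring solvability of the middle-block equation in degenerate cases --- e.g., when $Y_0$ is concentrated in a single block of $V_m$, when $p = 1$ or $q = 1$, or when $M_J Y_0 = 0$. In such cases, one may replace $Y_0$ by a slightly perturbed $Y_0'$ still satisfying $L_0 Y_0' = 0$ and $\varphi(L_0) Y_0' \neq 0$, which is possible because $\# \K \geq 3$ guarantees an abundance of such vectors on the hyperplane $\Ker L_0$.
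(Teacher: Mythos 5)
Your opening reduction is the right one and matches the paper: it suffices to prove that $LY=0$ implies $\varphi(L)Y=0$ (and dually for $\psi$, which the paper handles by transposing and letting $A_L^T,B_C^T$ act on $\Vect(e_1,y)$ rather than by invoking $V^T$ wholesale, but that is cosmetic). The core of your argument, however, does not work. First, the middle-block equation is not solvable in general: its solvability requires $\mu Y_0-\widetilde{C_1}(\overline{L_0}Y_0)-\delta M_JY_0$ to lie in $V_mY_0$, and since $V_m$ has a trivial spectrum one has $\mu Y_0\notin V_mY_0$ for $\mu\neq 0$; so you would need $\widetilde{C_1}(\overline{L_0}Y_0)+\delta M_JY_0$ to represent a \emph{nonzero multiple of $Y_0$} modulo $V_mY_0$, which you do not prove and which fails, e.g., whenever $\overline{L_0}Y_0=0$ and $M_JY_0\in V_mY_0$ (nothing known at this stage rules that out). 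Second, and more fundamentally, the solvability of your system nowhere uses the hypothesis $\varphi(L_0)Y_0\neq 0$: if the system were solvable you would contradict the trivial spectrum of $V$ outright, for \emph{any} $L_0$ and $Y_0$. This shows the system cannot be solvable for reasons independent of $\varphi$, so the failure of your construction cannot be traced back to the assumption on $\varphi(L_0)Y_0$ and the intended contradiction never materializes. Finally, the asserted nontriviality of the form $C\mapsto\alpha\,\overline{C}^T(Z^{-1})^TY_0$ is false when $q=1$ (then $\overline{C}=0$ identically), and no perturbation of $Y_0$ repairs this.

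The paper's mechanism is quite different and avoids constructing an eigenvector. Writing $x$ for the vector of $\Vect(e_2,\dots,e_n)$ with coordinate column $C$ and assuming $LC=0$, one observes that the plane $\Vect(x,e_{n+1})$ is stable under both $A_L$ (which annihilates $e_{n+1}$ and sends $x$ to $\varphi(L)C\cdot e_{n+1}$ plus terms in the plane) and $B_C$ (which sends $e_{n+1}$ to $x+\ast\,e_{n+1}$ and $x$ to $\ast\,e_{n+1}$). Restricting $B_C+\lambda A_L\in V$ to this plane, the condition that $1$ is never an eigenvalue yields a $2\times 2$ determinant that is an affine function of $\lambda$ with leading coefficient $\varphi(L)C$ and that vanishes for no $\lambda\in\K$; hence $\varphi(L)C=0$. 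The conclusion that $\varphi$ is scalar then follows because it stabilizes every hyperplane $\{L:\,LC=0\}$. If you want to keep an eigenvector-free argument of your own, you should look for a low-dimensional invariant subspace on which the relevant quantity appears as a coefficient of a free parameter, rather than trying to solve for an eigenvector whose existence the trivial-spectrum hypothesis forbids.
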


\begin{proof}
Let $C \in \Mat_{n-1,1}(\K)$ and $L \in \Mat_{1,n-1}(\K)$. Denote by $x$ (resp.\ $y$) the vector of $\Vect(e_2,\dots,e_n)$
with coordinate matrix $C$ (resp.\ $L^T$) in the basis $(e_2,\dots,e_n)$.
We prove that
\begin{equation}\label{preserveorthogonality}
LC=0 \Rightarrow \bigl(\varphi(L)\,C=0 \; \text{and} \; L\,\psi(C)=0\bigr).
\end{equation}
Assume that $LC=0$. Notice then that both $A_L$ and
$B_C$ stabilize the plane $\Vect(x,e_{n+1})$ and that the respective matrices of
their induced endomorphisms in the basis $(x,e_{n+1})$ are
$\begin{bmatrix}
0 & 0 \\
\varphi(L)C & 0
\end{bmatrix}$ and $\begin{bmatrix}
0 & 1 \\
t_1 & t_2
\end{bmatrix}$ for some $(t_1,t_2)\in \K^2$.
Since $V$ has a trivial spectrum, we deduce that
$$\forall \lambda \in \K, \quad \begin{vmatrix}
1 & 1 \\
t_1+\lambda\, \varphi(L)C & 1+t_2
\end{vmatrix} \neq 0,$$
hence $\varphi(L)C=0$. \\
Similarly, notice that $A_L^T$ and $B_C^T$ both stabilize $\Vect(e_1,y)$ and the respective matrices of their induced
endomorphisms in the basis $(e_1,y)$ are
$\begin{bmatrix}
0 & s_1 \\
1 & s_2
\end{bmatrix}$ and
$\begin{bmatrix}
0 & L \psi(C) \\
0 & 0
\end{bmatrix}$ for some $(s_1,s_2)\in \K^2$. With the above line of reasoning, we deduce that $L\psi(C)=0$. \\
We may now conclude. For the non-degenerate bilinear mapping $(L,C) \mapsto LC$ on $\Mat_{1,n-1}(\K) \times \Mat_{n-1,1}(\K)$,
we deduce from \eqref{preserveorthogonality}  that $\varphi$ stabilizes the orthogonal subspace of every linear hyperplane of $\Mat_{1,n-1}(\K)$,
hence $\varphi$ stabilizes every $1$-dimensional linear subspace of $\Mat_{1,n-1}(\K)$, which shows that $\varphi$ is a scalar multiple of the identity.
With the same line of reasoning, we see that $\psi$ is also a scalar multiple of the identity.
\end{proof}

\noindent We now have two scalars $\lambda$ and $\mu$ such that:
$$\forall L \in \Mat_{1,n-1}(\K), \quad
A_L=\begin{bmatrix}
0 & L & 0 \\
-Z\overline{L}^T & \widetilde{C_1}\overline{L} & 0 \\
f(L) & \lambda\,L & 0
\end{bmatrix}$$
and
$$\forall C \in \Mat_{n-1,1}(\K), \quad
B_C=\begin{bmatrix}
0 & 0 & 0 \\
\mu\,C & 0 & C \\
g(C) & -\alpha \,\overline{C}^T (Z^{-1})^T & \widetilde{L_1}Z^{-1}C
\end{bmatrix}.$$

\begin{claim}\label{claimhu0}
The map $h$ vanishes everywhere on $V_m$.
\end{claim}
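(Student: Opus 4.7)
The plan is to argue by contradiction: suppose $h \not\equiv 0$ on $V_m$. Since $h \colon V_m \to \K$ is linear and $V_m$ is a vector space, this non-vanishing means that $h$ is surjective, so $h(V_m) = \K$. I will then exhibit a matrix $M \in V$ together with a nonzero vector $v \in \K^{n+1}$ such that $Mv = \gamma\,v$ for some $\gamma \in \K \setminus \{0\}$, contradicting the trivial spectrum of $V$ and thus forcing $h \equiv 0$.

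Set $M := J + A_L + B_C + E_U$ with $L \in \Mat_{1,n-1}(\K)$, $C \in \Mat_{n-1,1}(\K)$ and $U \in V_m$ to be chosen, and look for an eigenvector of the form $v := e_1 + s\,e_{n+1}$ for a scalar $s \in \K$ to be selected. Using the explicit block shapes of $J$, $A_L$, $B_C$ and $E_U$ listed above, a direct computation gives
\[
Me_1 = \bigl(a,\; C'_1 - Z\overline{L}^T + \mu\,C,\; b + f(L) + g(C) + h(U)\bigr)^T
\]
and
\[
Me_{n+1} = \bigl(1,\; C,\; \widetilde{L_1}Z^{-1}C\bigr)^T.
\]
Writing $\ell := \widetilde{L_1}Z^{-1}$ and imposing $Mv = \gamma v$ decouples into three scalar conditions: the first forces $\gamma = a + s$; the middle one becomes $(\mu + s)\,C = Z\overline{L}^T - C'_1$; and the last one reduces, after substitution, to $h(U) = s(a+s) - b - f(L) - g(C) - s\,\ell C$.

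The hypothesis $\#\K \geq 3$ now enters through the choice of $s \in \K \setminus \{-a, -\mu\}$, which is nonempty since we exclude at most two elements. For any such $s$, the value $\gamma = a + s$ is nonzero and $\mu + s \neq 0$, so that the middle condition solves uniquely for $C$ in terms of $L$; taking $L := 0$ for concreteness, one obtains a definite scalar $\beta \in \K$ such that the last equation reads $h(U) = \beta$. The assumed surjectivity of $h$ then provides a $U \in V_m$ with $h(U) = \beta$, and this choice makes $M \in V$ admit $v = e_1 + s\,e_{n+1} \neq 0$ as an eigenvector for the nonzero eigenvalue $\gamma$, contradicting the trivial spectrum of $V$.

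The only delicate step is the selection of the scalar $s$, which is exactly where the hypothesis $\#\K \geq 3$ is needed; everything else is a routine block computation using the normal forms of $J$, $A_L$, $B_C$, $E_U$ already at hand.
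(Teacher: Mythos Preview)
Your proof is correct and follows essentially the same route as the paper. Both arguments fix a scalar $s$ (the paper calls it $t$) in $\K \setminus \{-a,-\mu\}$, work with the vector $e_1+s\,e_{n+1}$, and exploit that $B_C$ covers the middle block while $J$ contributes a nonzero first entry; the only cosmetic difference is that the paper invokes total intransitivity to conclude $e_{n+1}\notin V(e_1+s\,e_{n+1})$ (hence $h(U)=0$ directly, without a contradiction setup), whereas you unwind this into an explicit eigenvector by solving for the precise combination $J+A_L+B_C+E_U$.
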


\begin{proof}
Choose $t \in \K$ such that $\mu+t \neq 0$ and $a+t \neq 0$ (this is feasible since $\# \K \geq 3$).
Remark then that
$$\begin{cases}
\forall U \in V_m, \quad & E_U(e_1+t\,e_{n+1})=\begin{bmatrix}0 \\
[0]_{(n-1) \times 1} \\
 h(U)
\end{bmatrix} \\
\forall C \in \Mat_{n-1,1}(\K), \quad & B_C(e_1+t\,e_{n+1})=\begin{bmatrix}
0 \\
 (\mu+t)\,C \\
?
\end{bmatrix} \\
& J(e_1+t\,e_{n+1})=\begin{bmatrix}
a+t \\
? \\
?
\end{bmatrix}.
\end{cases}$$
However $V(e_1+te_{n+1})$ is a strict linear subspace of $\K^{n+1}$.
Judging from the vectors $B_C(e_1+t\,e_{n+1})$ and the vector $J(e_1+t\,e_{n+1})$,
we deduce that $V(e_1+te_{n+1})$ cannot contain $e_{n+1}$. This
shows that $h(U)=0$ for every $U \in V_m$.
\end{proof}

\noindent It follows that
$$\forall U \in V_m, \quad E_U=\begin{bmatrix}
0 & 0 & 0 \\
0 & U & 0 \\
0 & \widetilde{L_1}Z^{-1}U & 0
\end{bmatrix}.$$
From there, we need to study the glued and unglued cases separately.

\subsection{The case $V_{ul}$ and $V_{lr}$ are glued}

In this section, we assume $p=q=n$. In this case, we simply have
$\widetilde{L_1}=L_1$, $\widetilde{C_1}=C_1$, $Z=R_1=R_s$, $V_m=Z\Mata_{n-1}(\K)$ and
$\forall (L,C) \in \Mat_{1,n-1}(\K) \times \Mat_{n-1,1}(\K), \; \overline{L}=L \; \text{and} \; \overline{C}=C$.
Our aim is to prove that $V$ is equivalent to $\Mata_{n+1}(\K)$.

\begin{claim}
One has
$$\forall (L,C)\in \Mat_{1,n-1}(\K) \times \Mat_{n-1,1}(\K), \quad f(L)=-L_1L^T \quad \text{and} \quad
g(C)=\mu L_1Z^{-1}C.$$
\end{claim}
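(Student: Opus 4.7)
The plan is to construct, for each row $L \in \Mat_{1,n-1}(\K)$, two matrices $M_L, M'_L \in V$ whose action on $\K^{n+1}$ admits an explicit $3$-dimensional stable subspace, and then to exploit the trivial spectrum of $V$ through the characteristic polynomial of the restriction.

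First, I would set $M_L := \mu A_L + B_{ZL^T}$. Using the glued-case identifications ($\widetilde{L_1} = L_1$, $\widetilde{C_1} = C_1$, $Z = R_s$, $\overline L = L$, $\overline C = C$), a block computation yields
$$M_L e_1 = s(L)\, e_{n+1}, \qquad M_L e_{j+1} = L_j\, u \ (1 \le j \le n-1), \qquad M_L e_{n+1} = w,$$
where $s(L) := \mu f(L) + g(ZL^T)$, $u := (\mu,\, \mu C_1,\, \mu\lambda - \alpha)^T$, and $w := (0,\, ZL^T,\, L_1 L^T)^T$; the cancellation comes from $\mu \cdot(-ZL^T) + \mu\cdot ZL^T = 0$ in the middle block of $M_Le_1$. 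A further block computation of $M_L u$ and $M_L w$ shows that $F := \Vect(e_{n+1}, u, w)$ is $M_L$-stable, and the characteristic polynomial of $M_L|_F$ in the basis $(e_{n+1}, u, w)$ is
$$P_L(t) = t\bigl[(t - \mu L C_1)(t - L_1 L^T) - (\mu\lambda - \alpha)\, LZL^T\bigr] + \mu\, s(L)\, LZL^T.$$
Trivial spectrum of $V$ forces $P_L$ to have no nonzero $\K$-root, for every $L$.

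Second, evaluating $P_L$ at $t = \mu LC_1$ gives $\mu LZL^T\bigl[s(L) - (\mu\lambda - \alpha) LC_1\bigr]$, and since $P$ is non-isotropic, so is $Z$, so that $LZL^T \neq 0$ for $L \neq 0$. Hence, for $L$ with $LC_1 \ne 0$, the linear functional $L \mapsto s(L) - (\mu\lambda - \alpha)LC_1$ must vanish nowhere on $\{LC_1 \neq 0\}$, and a standard hyperplane argument then forces it to be a scalar multiple of $L \mapsto LC_1$, so that $s(L) = c_1\, LC_1$ for some $c_1 \in \K$. The parallel evaluation at $t = L_1 L^T$ yields $s(L) = c_2\, L_1 L^T$. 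In the generic case where $C_1$ and $L_1^T$ are not collinear, these force $s \equiv 0$; the collinear and degenerate cases are handled using the non-isotropy of $P$ and $Q$. In particular, one obtains $g(ZL^T) = -\mu f(L)$ for every $L$.

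Third, set $\rho := \lambda/\alpha$, $\beta := 1 - \rho\mu$, and consider $M'_L := A_L + \rho B_{ZL^T}$. Thanks to the relation $s \equiv 0$ from the previous step, the contributions of $A_L$ and $\rho B_{ZL^T}$ to the last component of the middle columns of $M'_L$ cancel, and a direct computation gives
$$M'_L e_1 = \beta\, v_1, \qquad M'_L e_{j+1} = L_j\, v_2, \qquad M'_L e_{n+1} = \rho\, w,$$
where $v_1 := (0,\, -ZL^T,\, f(L))^T$ and $v_2 := (1,\, C_1,\, 0)^T$. Then $G := \Vect(v_1, v_2, e_{n+1})$ is $M'_L$-stable, and the characteristic polynomial of $M'_L|_G$ simplifies (remarkably, using $s \equiv 0$) to
$$Q_L(t) = t(t - LC_1)(t - \rho L_1 L^T) + \beta\, LZL^T\, \bigl[t - \rho(L_1 L^T + f(L))\bigr].$$
Evaluating $Q_L$ at $t = \rho L_1 L^T$ gives $-\beta\rho\, f(L)\, LZL^T$, so that for $L$ with $L_1 L^T \neq 0$ the value $\rho L_1 L^T$ would be a nonzero $\K$-root of $Q_L$ unless $f(L) = 0$; by linearity $f$ is a scalar multiple of $L \mapsto L_1 L^T$. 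Evaluating at $t = LC_1$ similarly forces $f(L) + L_1 L^T$ to be a scalar multiple of $L \mapsto LC_1$. Combining these (in the generic case, with the degenerate case again handled via non-isotropy) yields $f(L) = -L_1 L^T$ for every $L$. Plugging back into $g(ZL^T) = -\mu f(L) = \mu L_1 L^T$ and replacing $L^T$ by $Z^{-1}C$ gives $g(C) = \mu L_1 Z^{-1} C$.

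The principal technical obstacle is the two linearity arguments in the second and third steps, which deduce the vanishing of specific linear functionals over an \emph{arbitrary} field $\K$ from the non-vanishing of cubic polynomials at certain points. The hypothesis $\# \K \geq 3$ is crucial here, as it provides enough scalars for the hyperplane/parameter-variation arguments, and the non-isotropy of $P$ and $Q$ is what handles the degenerate configurations of $C_1$ and $L_1$.
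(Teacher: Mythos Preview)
Your approach is genuinely different from the paper's, and it has real gaps that are not filled by the hand-waving in steps 2 and 3.

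\textbf{The gaps.} In step 2 the constant term of $P_L$ is $\pm\mu\,s(L)\,LZL^T$, so when $\mu=0$ the polynomial $P_L(t)$ has a factor $t$ and carries no information about $s(L)$ at all; yet step 3 explicitly relies on $s\equiv 0$ (your identity $M'_L e_1=\beta v_1$ is equivalent to $f(L)+\rho g(ZL^T)=(1-\rho\mu)f(L)$, i.e.\ $s(L)=0$). Even when $\mu\neq 0$, your hyperplane argument at $t=\mu LC_1$ only applies when $C_1\neq 0$, and the argument at $t=L_1L^T$ only when $L_1\neq 0$; the conclusion $s\equiv 0$ then requires $C_1$ and $L_1^T$ to be non-proportional. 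Your claim that ``the collinear and degenerate cases are handled using the non-isotropy of $P$ and $Q$'' is not justified: non-isotropy of $P=\begin{bmatrix}1&0\\ C_1&Z\end{bmatrix}$ in no way prevents $C_1=0$ (it just means $P$ is block lower-triangular with non-isotropic blocks), and likewise for $Q$ and $L_1$. So in the degenerate configurations you have not established $s\equiv 0$, and step~3 does not get off the ground. A further difficulty: you never use the matrix $J$, but $J$ is the only generator of $V$ with nonzero $(1,n{+}1)$-entry, and it is not clear that the span of the $A_L$, $B_C$, $E_U$ alone pins down $f$ and $g$ in every case.

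\textbf{What the paper does instead.} The paper's argument is short and uniform, with no case distinctions on $\mu$, $C_1$, or $L_1$. One picks $t\in\K\setminus\{-a\}$ and observes that $J(e_1+te_{n+1})$ has first entry $a+t\neq 0$, while $A_L(e_1+te_{n+1})=\bigl(0,\,-ZL^T,\,f(L)\bigr)^T$ and $B_C(e_1+te_{n+1})=\bigl(0,\,(\mu+t)C,\,g(C)+tL_1Z^{-1}C\bigr)^T$. Since $V(e_1+te_{n+1})\subsetneq\K^{n+1}$ and contains a vector with nonzero first entry, its intersection with $\Vect(e_2,\dots,e_{n+1})$ has dimension $<n$ and therefore equals $\{A_L(e_1+te_{n+1}):L\}$; matching the $B_C$ vectors against this yields $g(C)+tL_1Z^{-1}C=-(\mu+t)\,f\bigl(C^T(Z^{-1})^T\bigr)$ for every $C$. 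As this holds for at least two values of $t$ (here $\#\K\geq 3$ is used), comparing coefficients gives both formulas at once. The key idea you are missing is to exploit $J$ together with the total intransitivity of $V$ on the pencil $e_1+te_{n+1}$, rather than to build invariant $3$-planes.
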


\begin{proof}
Let $t \in \K \setminus \{-a\}$.
Note that $J(e_1+te_{n+1})$ has $a+t$ as first entry, whereas
$$\begin{cases}
\forall L \in \Mat_{1,n-1}(\K), \; & A_L(e_1+te_{n+1})=\begin{bmatrix}
0 \\
 -ZL^T \\
  f(L)
\end{bmatrix} \\
\forall C \in \Mat_{n-1,1}(\K), \; & B_C(e_1+te_{n+1})=\begin{bmatrix}
0 \\
 (\mu+t)C \\
  g(C)+tL_1Z^{-1}C
\end{bmatrix}.
\end{cases}$$
Judging from $J(e_1+te_{n+1})$, the vector space $V(e_1+te_{n+1})$ cannot contain $\Vect(e_2,\dots,e_{n+1})$.
Thus $V(e_1+te_{n+1}) \cap \Vect(e_2,\dots,e_{n+1})=\bigl\{A_L(e_1+te_{n+1}) \mid L \in \Mat_{1,n-1}(\K)\bigr\}$
(since the first space has a dimension lesser than $n$ and obviously contains the second one).
Using the $B_C$ matrices, it follows that
$$\forall C \in \Mat_{n-1,1}(\K), \quad g(C)+tL_1Z^{-1}C=(\mu+t)\,f\bigl(-C^T (Z^{-1})^T\bigr).$$
Since this holds for several values of $t$, we deduce that
$$\forall C \in \Mat_{n-1,1}(\K), \quad g(C)=-\mu f(C^T (Z^{-1})^T) \quad \text{and} \quad L_1Z^{-1}C=-f\bigl(C^T (Z^{-1})^T\bigr),$$
which obviously yields the claimed results.
\end{proof}

\noindent Therefore, for any $(L,C,U)\in \Mat_{1,n-1}(\K) \times \Mat_{n-1,1}(\K) \times Z\,\Mata_{n-1}(\K)$,
we have
$$A_L=\begin{bmatrix}
0 & L & 0 \\
-ZL^T & C_1 L & 0 \\
-L_1L^T & \lambda\,L & 0
\end{bmatrix} \quad ; \quad
B_C=\begin{bmatrix}
0 & 0 & 0 \\
\mu\,C & 0 & C \\
\mu\,L_1Z^{-1}C & -\alpha \,C^T (Z^{-1})^T & L_1 Z^{-1}C
\end{bmatrix}$$
and
$$E_U=\begin{bmatrix}
0 & 0 & 0 \\
0 & U & 0 \\
0 & L_1Z^{-1}U & 0
\end{bmatrix}.$$
Set now
$$T:=\begin{bmatrix}
1 & 0 & 0 \\
C_1 & Z & 0 \\
\lambda & L_1 & \alpha
\end{bmatrix}\in \GL_{n+1}(\K)
\quad \text{and} \quad T':=
\begin{bmatrix}
1 & 0 & 0 \\
0 & I_{n-1} & 0 \\
-\mu & 0 & 1
\end{bmatrix} \in \GL_{n+1}(\K).$$
A straightforward computation shows that, for every $(L,C,U)\in \Mat_{1,n-1}(\K) \times \Mat_{n-1,1}(\K) \times (Z\Mata_{n-1}(\K))$:
$$T^{-1}A_L\,T'=\begin{bmatrix}
0 & L & 0 \\
-L^T & 0 & 0 \\
0 & 0 & 0
\end{bmatrix} \quad ; \quad
T^{-1}B_C\,T'=\begin{bmatrix}
0 & 0 & 0 \\
0 & 0 & Z^{-1}C \\
0 & -(Z^{-1}C)^T & 0
\end{bmatrix}$$
and
$$T^{-1}E_U\, T'=\begin{bmatrix}
0 & 0 & 0 \\
0 & Z^{-1}U & 0 \\
0 & 0 & 0
\end{bmatrix}.$$
Therefore $T^{-1}VT'$ contains a linear hyperplane of $\Mata_{n+1}(\K)$.
Since $V$ acts totally intransitively on $\K^{n+1}$, this is also the case of $T^{-1}VT'$,
hence Lemma \ref{dernierrangement} shows that $T^{-1}VT'=\Mata_{n+1}(\K)$.
We deduce that $V$ is equivalent to $\Mata_{n+1}(\K)$ and may thus be written as $Y\Mata_{n+1}(\K)$ for some
$Y \in \GL_{n+1}(\K)$, and Lemma \ref{anisotropicequivalence} yields that $Y$ is non-isotropic.
This completes the case where $V_{ul}$ and $V_{lr}$ are glued.

\subsection{The case $V_{ul}$ and $V_{lr}$ are unglued}

Here, we assume that $p<n$ or $q<n$.
Note that this means that $p=1$ or $q=1$ or
there are several diagonal blocks $R_1 \Mata_{i_1}(\K),\dots,R_s \Mata_{i_s}(\K)$ in the block decomposition of $V_m$ discussed earlier.
Note in particular that $p+q \leq n+1$.

\vskip 2mm
Our aim is to prove that $V$ is reducible. Since the matrices $A_L$, $B_C$, $E_U$ and $J$ span $V$,
it suffices to find a non-trivial linear subspace of $\K^{n+1}$ which is stabilized by all of them.
In that prospect, we start by analyzing $f$ and $g$.

\begin{claim}\label{claimf=0}
One has $f=0$, and $g(C)=0$ for every $C \in \Mat_{n-1,1}(\K)$ such that $\overline{C}=0$.
\end{claim}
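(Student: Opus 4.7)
The plan is to derive each vanishing statement by a contradiction of the form ``$Vv = \K^{n+1}$'' (respectively ``$V^T v = \K^{n+1}$'') for a carefully chosen two-term vector $v = e_1 + t e_{n+1}$ or $v = e_{n+1} + t e_1$, which would contradict the total intransitivity of $V$ (resp.\ of $V^T$). The decisive input from the unglued hypothesis is the inequality $p+q \leq n+1$. It implies that the support of the row $\widetilde{L_1}$ (positions $n-q+1,\dots,n-1$) is disjoint from the support of the column $\overline{L}^T$ (positions $1,\dots,p-1$), so $\widetilde{L_1}\overline{L}^T = 0$; moreover, since $Z$ is block-diagonal with first block $R_1$ of size $i_1 = p-1$ (or simply $\overline{L}^T = 0$ when $p = 1$), the vector $Z\overline{L}^T$ lies in the first $p-1 \leq n-q$ coordinates, whence $\overline{Z\overline{L}^T} = 0$.

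For the statement on $g$, I would fix $t \in \K \setminus \{0,-\lambda\}$ (available since $\#\K \geq 3$) and set $v := e_{n+1} + t e_1$. Computing the transposed generators yields $A_L^T v = (f(L),(\lambda+t)L^T,0)$, $E_U^T v = (0,U^T Z^{-T}\widetilde{L_1}^T,0)$, $J^T v = (b+ta,L_1'^T,t)$, and for $\overline{C} = 0$ the support computation kills the last entry of $B_C^T v$, giving $B_C^T v = g(C)\,e_1$. If $g(C) \neq 0$ for such a $C$, then $e_1 \in V^T v$; the $A_L^T v$ contributions, modulo $e_1$ and with $\lambda+t \neq 0$, then produce every middle vector; and since $J^T v$ has last coordinate $t \neq 0$, we also get $e_{n+1}$. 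Thus $V^T v = \K^{n+1}$, contradicting the total intransitivity of $V^T$.

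For $f = 0$, I would dualize: fix $t \in \K \setminus \{-a,-\mu\}$ and set $v := e_1 + t e_{n+1}$. The crucial combination is $M := A_L + \tfrac{1}{\mu+t}\,B_{Z\overline{L}^T}$. Its middle contributions cancel by construction, and the two last-coordinate cross-terms vanish: $\widetilde{L_1}\overline{L}^T = 0$ by disjointness of supports, and $g(Z\overline{L}^T) = 0$ by the previous paragraph (since $\overline{Z\overline{L}^T} = 0$). Therefore $Mv = f(L)\,e_{n+1}$. If $f(L) \neq 0$, then $e_{n+1} \in Vv$; the $B_C$ contributions (with $\mu+t \neq 0$) give every middle vector; and $Jv$ has first coordinate $a+t \neq 0$, yielding $e_1$. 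Once more $Vv = \K^{n+1}$, contradicting the trivial spectrum of $V$.

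The main obstacle is isolating the combination $M$ and verifying that both cross-terms $\widetilde{L_1}\overline{L}^T$ and $g(Z\overline{L}^T)$ actually vanish; both rest on the unglued inequality $p + q \leq n+1$ (and, for the second, on the first step). The rest — the explicit column and row computations for $A_L, B_C, E_U, J$ and their transposes, together with the three-step dimension count $\{e_1, \text{middles}, e_{n+1}\} \subset Vv$ — is routine.
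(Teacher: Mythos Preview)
Your proposal is correct and follows essentially the same route as the paper. Both arguments pick the test vectors $e_1+te_{n+1}$ and $te_1+e_{n+1}$, exploit total intransitivity of $V$ and $V^T$, and rely on the unglued inequality $p+q\leq n+1$ to force $\widetilde{L_1}\,\overline{L}^T=0$ and $\overline{Z\overline{L}^T}=0$. The only cosmetic difference is that the paper first treats the special case $\overline{L}=0$ before passing to the combination $(\mu+t)A_L+B_{Z\overline{L}^T}$, whereas you jump directly to the combination $A_L+\frac{1}{\mu+t}B_{Z\overline{L}^T}$ for arbitrary $L$; since this combination already covers the special case, your version is a mild streamlining rather than a different argument.
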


\begin{proof}
We start by proving that
$$\forall L \in \Mat_{1,n-1}(\K), \quad \overline{L}=0 \Rightarrow f(L)=0 \qquad
\text{and} \qquad \forall C \in \Mat_{n-1,1}(\K), \quad \overline{C}=0 \Rightarrow g(C)=0.$$
We choose $t \in \K$ such that $\mu+t \neq 0$ and $a+t \neq 0$.
Then, for every $L \in \Mat_{1,n-1}(\K)$ such that $\overline{L}=0$,
one has
$$A_L(e_1+te_{n+1})=\begin{bmatrix}
0 \\
 [0]_{(n-1) \times 1} \\
  f(L)
\end{bmatrix},$$
hence $f(L)=0$ with the same argument as in the proof of Claim \ref{claimhu0}. \\
Choose now $x \in \K$ such that $\lambda+x \neq 0$ and $x \neq 0$.
Then
$$\begin{cases}
\forall L \in \Mat_{1,n-1}(\K), \quad & (xe_1+e_{n+1})^TA_L=\begin{bmatrix}
f(L) & (\lambda+x)L & 0
\end{bmatrix} \\
& (xe_1+e_{n+1})^TJ=\begin{bmatrix}
? & [?]_{1 \times (n-1)} & x
\end{bmatrix}.
\end{cases}$$
Since $V^T(xe_1+e_{n+1})$ is a strict linear subspace of $\K^{n+1}$,
those matrices show that $e_1$ cannot belong to $V^T(xe_1+e_{n+1})$.
However
$$\forall C \in \Mat_{n-1,1}(\K), \; (xe_1+e_{n+1})^TB_C
=\begin{bmatrix}
g(C) & -\alpha\,\overline{C}^T(Z^{-1})^T & \widetilde{L_1}Z^{-1}C
\end{bmatrix}.$$
Therefore, if $\overline{C}=0$, then $\widetilde{L_1}Z^{-1}C=0$ and hence $g(C)=0$. \\
Let $L \in \Mat_{1,n-1}(\K)$.
The column matrix $C:=Z\overline{L}^T$ has null entries starting from the $p$-th, and
since $p+q\leq n+1$, this yields $\overline{C}=0$. Therefore $g(C)=0$ and
$$\bigl((\mu+t)A_L+B_C\bigr)(e_1+te_{n+1})=\begin{bmatrix}
0 \\
[0]_{(n-1) \times 1} \\
(\mu+t)f(L)
\end{bmatrix}.$$
The above argument then shows that $f(L)=0$.
\end{proof}

In particular, we have
$$\forall L \in \Mat_{1,n-1}(\K), \; A_L=\begin{bmatrix}
0 & L & 0 \\
-Z\overline{L}^T & \widetilde{C_1}\overline{L} & 0 \\
0 & \lambda\,L & 0
\end{bmatrix}.$$
We now distinguish between two cases, whether $p<n$ or $p=n$.

\begin{claim}\label{claimp<n}
If $p<n$, then $Ve_1 \subset \Vect(e_1,\dots,e_p)$.
\end{claim}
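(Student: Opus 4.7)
Since $V$ is spanned by the four families $A_L$, $B_C$, $E_U$, and $J$, the inclusion $Ve_1 \subset \Vect(e_1,\dots,e_p)$ need only be checked on each of these families. Two cases are immediate from the explicit formulas: $A_L e_1 = -Z\overline{L}^T$ lies in $\Vect(e_2,\dots,e_p)$ because $\overline{L}^T$ has support in the first $p-1$ entries of the middle block and $Z$ is block-diagonal with first block $R_1$ of size $p-1$; and $E_U e_1 = 0$. Hence the whole problem reduces to showing that $B_C e_1 \in \Vect(e_1,\dots,e_p)$ for every $C$ and that $Je_1 \in \Vect(e_1,\dots,e_p)$.

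Unfolding the explicit forms $B_C e_1 = (0,\mu C,g(C))$ and $Je_1 = (a,C'_1,b)$ in the $3\times 3$ block decomposition, and using the hypothesis $p<n$, this is equivalent to the four vanishings $\mu=0$, $g\equiv 0$, $b=0$, and $C'_1 \in \Vect(e_1,\dots,e_{p-1})$ (inside the middle block $\K^{n-1}$). Here the reduction $p-1<n-1$ is what forces $\mu=0$: since $C$ ranges over all of $\K^{n-1}$, the requirement that $\mu C$ lie in $\Vect(e_1,\dots,e_{p-1})$ for every $C$ is equivalent to $\mu=0$.

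I would extract these four identities by the same method used in the proofs of Claims \ref{claimhu0} and \ref{claimf=0}: study row-subspaces $V^T Y$ for well-chosen $Y$, and exploit that each $V^T Y$ is a proper subspace of $\K^{n+1}$ because $V^T$ has trivial spectrum. For $Y = e_j$ with $j\in\lcro p+1,n\rcro$, the $j$-th row of $A_L$ vanishes (both $-Z\overline{L}^T$ and $\widetilde{C_1}\overline{L}$ being supported in the first $p-1$ positions), so $V^T e_j$ is built only from the rows coming from $B_C$, $E_U$, and $J$. A projection analysis onto the plane $\Vect(e_1,e_{n+1})^T$ combined with properness of $V^T e_j$ forces the $B_C$-contribution $C_{j-1}(\mu,0,\dots,0,1)$ and the $J$-contribution $(C'_{1,j-1},?,0)$ to align, and produces $\mu=0$ together with $C'_{1,j-1}=0$ for each such $j$. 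For $Y=xe_1+e_{n+1}$ with $x\neq 0$ and $x+\lambda\neq 0$ (feasible thanks to $\#\K\geq 3$), the $A_L$-rows $(0,(x+\lambda)L,0)$ now saturate the middle block, and the alignment of the $B_C$- and $J$-projections onto $\Vect(e_1,e_{n+1})^T$ yields a relation of the form $g(C)=(a+b/x)\widetilde{L_1}Z^{-1}C$ valid for every admissible $x$; varying $x$ over two admissible values (again using $\#\K\geq 3$) gives $b=0$.

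The main obstacle is closing the last gap between the weak identity $g(C)=a\widetilde{L_1}Z^{-1}C$ produced by the projection analysis and the desired $g\equiv 0$. To bridge it one uses the already-established identities $\mu=0$ and $C'_1\in\Vect(e_1,\dots,e_{p-1})$ in order to manufacture, whenever some $g(C)$ were non-zero, a matrix $M\in V$ of the form $\tfrac{1}{a}J+A_L+E_U+B_C$ that would admit $e_1$ as an eigenvector with non-zero eigenvalue, contradicting trivial spectrum. The unglued hypothesis $p<n$ is essential at this step: it guarantees that $V_{ul}$ stabilizes the proper subspace $\Vect(e_1,\dots,e_p)$ of $\K^n$, so that one can select $L$ with $Z\overline{L}^T$ cancelling the contribution of $C'_1$ in the middle block, making the eigenvector condition solvable.
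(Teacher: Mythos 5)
Your reduction is sound: since $f$ and $h$ vanish by Claims \ref{claimf=0} and \ref{claimhu0}, the first columns of the $A_L$ and $E_U$ matrices already lie in $\Vect(e_1,\dots,e_p)$, and the claim is indeed equivalent to the four identities $\mu=0$, $g\equiv 0$, $b=0$ and $C'_1$ supported in its first $p-1$ entries. The general method you invoke (properness of $V^TY$ for well-chosen $Y$) is also the paper's. The gap is in your choice of $Y$ for the central step. For $Y=e_j$ with $j-1=i\in\lcro p,n-1\rcro$, the space $V^Te_j$ is spanned by the line $\K\,(\mu,0,\dots,0,1)^T$, the transposed $j$-th row of $J$, and the vectors $(0,U^Te_i,0)$ with $U\in V_m$; its dimension is therefore at most $1+1+\dim(V_m^Te_i)\leq n$, so properness holds automatically and carries no information. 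More to the point, properness of a subspace of $\K^{n+1}$ does not make its projection onto the coordinate plane $\Vect(e_1,e_{n+1})$ proper, so no ``alignment'' of the $B_C$- and $J$-contributions is forced, and neither $\mu=0$ nor $c'_i=0$ follows from this. The paper's proof takes instead $Y=xe_1+ye_{i+1}+ze_{n+1}$ with $x+\lambda z\neq 0$: the $xe_1$ component makes the $A_L$-rows equal to $\begin{bmatrix}0 & (x+\lambda z)L & 0\end{bmatrix}$, which saturate the whole middle block $\{0\}\times\K^{n-1}\times\{0\}$, and only then does properness descend to the quotient $\K^2$ and yield a $2\times 2$ determinant identity, into which the $ye_{i+1}$ component injects the quantities $\mu y$ and $c'_iy$ you need to kill. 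Expanding that identity in $(x,y,z)$ (using $\#\K\geq 3$ twice) is what gives $c'_i=0$, $b=0$, $a=\mu$ and $g(C''_i)=a\gamma$ simultaneously.

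A second, smaller gap: your route to $b=0$ via $Y=xe_1+e_{n+1}$ only produces $b\,\widetilde{L_1}Z^{-1}C=0$ for all $C$ after eliminating between two admissible values of $x$, and this forces $b=0$ only when $\widetilde{L_1}\neq 0$; it is vacuous when $q=1$. In the paper $b=0$ drops out of the same three-parameter computation ($c'_i\gamma+b=0$ together with $c'_i=0$). Your closing move is essentially the paper's: once $C'_1$ is known to be supported in the first $p-1$ entries, pick $L$ with $Z\overline{L}^T=C'_1$, so that $A_L+J$ has first column $a\,e_1$ and trivial spectrum forces $a=0$, whence $\mu=a=0$ and then $g=0$ by linearity together with Claim \ref{claimf=0}. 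So the endgame is right, but the engine that is supposed to produce the identities feeding into it does not run as described.
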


\begin{proof}
Assume that $p<n$. \\
Write $C'_1=\begin{bmatrix}
c'_1 \\
\vdots \\
c'_{n-1}
\end{bmatrix}$.
Let $i \in \lcro p,n-1\rcro$ (note that such an integer exists). \\
Let $(x,y,z)\in \K^3$ such that $x+\lambda z \neq 0$.
Denote by $C''_i \in \Mat_{n-1,1}(\K)$ the column matrix with all entries $0$
except the $i$-th which equals $1$.
Note that, for every $L \in \Mat_{1,n-1}(\K)$, both column matrices $\widetilde{C_1}$ and $Z\overline{L}^T$
have zero entries starting from the $p$-th: for $\widetilde{C_1}$, this comes from its very definition; for
$Z\overline{L}^T$, this is obvious if $p=1$ because then $\overline{L}=0$, otherwise this comes from the fact that
$Z$ stabilizes $\K^{p-1} \times \{0\} \subset \K^{n-1}$ (as $i_1=p-1$) and that $\overline{L} \in \K^{p-1} \times \{0\}$.
It follows that the $(i+1)$-th row of every $A_L$ matrix is zero. Setting $\gamma:=\widetilde{L_1}Z^{-1}C''_i$, we therefore have:
$$\begin{cases}
\forall L \in \Mat_{1,n-1}(\K), \; & (xe_1+ye_{i+1}+ze_{n+1})^TA_L=\begin{bmatrix}
0 & (x+\lambda z)\,L & 0
\end{bmatrix} \\
& (xe_1+ye_{i+1}+ze_{n+1})^TJ=\begin{bmatrix}
ax+c'_iy+bz & [?]_{1 \times (n-1)} & x
\end{bmatrix} \\
& (xe_1+ye_{i+1}+ze_{n+1})^T B_{C''_i}=
\begin{bmatrix}
\mu y+g(C''_i)z & [?]_{1 \times (n-1)} & y+\gamma z
\end{bmatrix}.
\end{cases}$$
Since $V^T(xe_1+ye_{i+1}+ze_{n+1}) \neq \K^{n+1}$, we deduce that
$$\begin{vmatrix}
ax+c'_iy+bz & x \\
\mu y+g(C''_i)z & y+\gamma z
\end{vmatrix}=0.$$
Notice that, with an arbitrary $(y,z)\in \K^2$ being fixed, the above equation is linear in $x$ and has several solutions, hence
$$(c'_iy+bz)(y+\gamma z)=0 \quad \text{and} \quad a(y+\gamma z)-(\mu y+g(C''_i)z)=0.$$
Both equations have a degree lesser than or equal to $2$ in both variables. Since $\card \K>2$, we deduce that
$$c'_i=0 \quad ; \quad c'_i\gamma+b=0 \quad ; \quad a=\mu \quad \text{and} \quad a\gamma=g(C''_i).$$
Therefore $a=\mu$, $b=0$ and $c'_p=\dots=c'_{n-1}=0$.
Since $Z$ is non-singular and stabilizes $\K^{p-1} \times \{0\}$, we may thus find $L \in \Mat_{1,n-1}(\K)$ such that $C'_1=Z\overline{L}^T$.
The first column of $A_L+J$ is $\begin{bmatrix}
a \\
 0 \\
 \vdots \\
  0
\end{bmatrix}$, therefore $a=0$ (because $A_L+J$ has no non-zero eigenvalue). It follows that
$\mu=0$ and $g(C''_i)=0$ for every $i \in \lcro p,n-1\rcro$.
Since $g$ is linear, $g(C)=0$ whenever $\overline{C}=0$ (by Claim \ref{claimf=0}),
and $p-1<n-q+1$, we deduce that $g=0$. \\
For any matrix of type $A_L$, $B_C$, $E_U$ or $J$, we have therefore found that
its first column has null entries starting from the $(p+1)$-th. This yields our claim
since these matrices span $V$.
\end{proof}

\begin{claim}\label{claimp=n}
Assume that $p=n$ (and therefore $q=1$). Then $\lambda=b=0$ and $L'_1=0$.
\end{claim}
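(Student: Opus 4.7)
The plan is to derive $\lambda = 0$ and $L'_1 = 0$ first, and then $b = 0$, in each case by producing—under the negation of the conclusion—an explicit matrix of $V$ admitting a nonzero eigenvalue, contradicting trivial spectrum. In the present setting ($p=n$, $q=1$) we have $Z = P'_1$ invertible, $\widetilde{L_1} = 0$, $\widetilde{C_1} = C_1$, $\overline{L} = L$, $\overline{C} = 0$, and (since $g \equiv 0$ has been established) the matrices $A_L$, $B_C$, $E_U$, $J$ take the simplified block forms displayed earlier.

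For the first reduction, fix $i \in \lcro 2, n \rcro$ and $s \in \K \setminus \{0\}$, and consider $Y := e_i + s\,e_{n+1}$. Using $A_L e_i = L_{i-1}\,(1, C_1^T, \lambda)^T$ and the explicit forms of $B_C, E_U, J$, the equation $MY = Y$ for $M := \alpha A_L + \beta B_C + \gamma E_U + \delta J \in V$ decouples into $\alpha L_{i-1} + \delta s = 0$, $\delta\bigl((L'_1)_{i-1} - s\lambda\bigr) = s$, and a middle vector equation in $\K^{n-1}$ that is solvable for $C$ (with $\gamma = 0$, $\beta = 1$). Provided $(L'_1)_{i-1} \neq s\lambda$, the second condition determines $\delta$, the first determines the pair $(\alpha, L_{i-1})$, and the middle determines $C$; the constructed $M \in V$ then satisfies $MY = Y$ with $Y \neq 0$, violating trivial spectrum. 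Hence $(L'_1)_{i-1} = s\lambda$ for every $s \in \K \setminus \{0\}$. Since $\card\K \geq 3$ gives at least two distinct choices of $s$, one infers $\lambda = 0$ and $(L'_1)_{i-1} = 0$. Running over all $i$ yields $L'_1 = 0$.

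For the second reduction, assume $b \neq 0$. With $\lambda = L'_1 = 0$ already proved, pick $y \in \K^{n-1} \setminus \{0\}$ and consider $Y := e_1 + y + t\,e_{n+1}$ (with $y$ embedded in the middle coordinates). For $M := J + \alpha A_L + \beta B_C + \gamma E_U$, the equation $MY = \mu' Y$ with $\mu' \neq 0$ forces, from the first and last coordinates, $t = b/\mu'$ and $\alpha L y = (\mu'^2 - a\mu' - b)/\mu' =: k$. The middle vector equation rearranges to $\alpha Z L^T = W + \beta(\mu + t)\,C + \gamma U y$ for an explicit vector $W$ depending on $(C'_1, C_1, y, \mu')$ and the middle block of $J$; substituting $L^T$ back into $\alpha L y = k$ (and taking $\gamma = 0$) yields the single scalar relation
\[ \beta\,(\mu + b/\mu')\, C^T Z^{-T} y \;=\; k - W^T Z^{-T} y. \]
Since $y \neq 0$ forces $Z^{-T} y \neq 0$, the left-hand side takes every value in $\K$ as $C$ varies, provided $\mu\mu' + b \neq 0$. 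Because $b \neq 0$ and $\card\K \geq 3$, some $\mu' \in \K \setminus \{0\}$ avoids the at most one forbidden value $-b/\mu$; the resulting $M \in V$ satisfies $MY = \mu' Y$ with $\mu' \neq 0$, contradicting trivial spectrum. Hence $b = 0$.

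The main technical hurdle is in the second step: the middle vector equation couples $L$, $Z L^T$, and $C$, so one must eliminate $L$ to extract the single scalar solvability condition $\mu\mu' + b \neq 0$ imposed on $C$. The hypothesis $\card\K \geq 3$ is essential at both stages, providing the two distinct values of $s$ in the first step and an admissible $\mu'$ in the second.
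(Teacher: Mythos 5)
Your argument is correct, but it follows a genuinely different track from the paper's. The paper feeds the single family of test vectors $xe_1+e_{i+1}+ze_{n+1}$ into the total-intransitivity property $VX\subsetneq \K^{n+1}$: since the vectors $B_C X$ already fill the whole middle block $\{0\}\times\K^{n-1}\times\{0\}$ when $\mu x+z\neq 0$, the projections of $A_{L''_i}X$ and $JX$ onto $\Vect(e_1,e_{n+1})$ must be proportional, and the resulting determinant identity $bx+l'_i-\lambda(ax+z)=0$, valid for enough pairs $(x,z)$, delivers $\lambda=0$, $b=0$ and $L'_1=0$ in one stroke. You instead construct explicit invariant vectors for suitable combinations $\alpha A_L+\beta B_C+\gamma E_U+\delta J$, in two stages with two distinct families of test vectors: $e_i+se_{n+1}$ for $\lambda$ and $L'_1$, then $e_1+y+te_{n+1}$ for $b$. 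Both stages check out: in the first, $(L'_1)_{i-1}=s\lambda$ for two distinct non-zero $s$ indeed forces $\lambda=(L'_1)_{i-1}=0$; in the second, the elimination of $L$ via $\alpha ZL^T=W+\beta(\mu+t)C$, the surjectivity of $C\mapsto C^TZ^{-T}y$, and the existence of $\mu'\neq 0$ with $\mu\mu'+b\neq 0$ are all sound (note $Z=P'_1$ is invertible because $P_1$ is lower-triangular and non-singular). The paper's packaging buys brevity and uniformity, one determinant identity replacing your coupled system; your version buys a fully explicit eigenvector and, as an unannounced by-product, the fact that $\mu=0$: your second construction produces a contradiction whenever $\mu\neq 0$, even if $b=0$. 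That is not an error---$\mu=0$ is genuinely forced once the Claim holds, since all matrices of $V$ then have zero last row and the upper-left blocks must lie in $P\Mata_n(\K)$---but it is worth flagging so a reader does not suspect the argument of overreaching.
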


\noindent This shows that all the matrices $A_L$, $B_C$, $E_U$ and $J$ have zero as last row in the case $p=n$.

\begin{proof}
Since $q=1$, one has $\widetilde{L_1}=0$,
whilst $\overline{C}=0$ for every $C \in \Mat_{1,n-1}(\K)$. This leads to $f=0$ and $g=0$ by Claim \ref{claimf=0}. \\
Therefore
$$\forall (L,C)\in \Mat_{1,n-1}(\K) \times \Mat_{n-1,1}(\K), \;
A_L=\begin{bmatrix}
0 & L & 0 \\
-Z L^T & ? & 0 \\
0 & \lambda\,L & 0
\end{bmatrix} \quad \text{and} \quad
B_C=\begin{bmatrix}
0 & 0 & 0 \\
\mu\,C & 0 & C \\
0 & 0 & 0
\end{bmatrix}.$$
Write $L'_1=\begin{bmatrix}
l'_1 & \cdots & l'_{n-1}
\end{bmatrix}$.
Let $i \in \lcro 1,n-1\rcro$.
Denote by $L''_i \in \Mat_{1,n-1}(\K)$ the row matrix with all entries zero except the $i$-th which equals one. \\
Let $(x,z)\in \K^2$ such that $\mu x+z \neq 0$.
Then
$$\begin{cases}
\forall C \in \Mat_{n-1,1}(\K), \; & B_C (x e_1 +e_{i+1}+ze_{n+1})=\begin{bmatrix}
0 \\
 (\mu x+z)C \\
  0
\end{bmatrix} \\
& A_{L''_i} (x e_1 +e_{i+1}+ze_{n+1})=
\begin{bmatrix}
1 \\
 [?]_{(n-1) \times 1} \\
  \lambda
\end{bmatrix} \\
& J (x e_1 +e_{i+1}+ze_{n+1})=
\begin{bmatrix}
ax+z \\
  [?]_{(n-1) \times 1} \\
  bx+l'_i
\end{bmatrix}.
\end{cases}$$
We deduce that
$\begin{vmatrix}
1 & ax+z \\
\lambda & bx+l'_i
\end{vmatrix}=0$. Since, for a given $x \in \K$, this holds for several values of $z$, we successively deduce that
$\lambda=0$ and $\forall x \in \K, \; b\,x+l'_i=0$, which yields $\lambda=b=l'_i=0$.
Therefore $L'_1=0$.
\end{proof}

In two special cases, we may now conclude that $V$ is reducible:
if $p=1$ then Claim \ref{claimp<n} shows that $\Vect(e_1)$ is stabilized by $V$;
if $p=n$, then Claims \ref{claimf=0} and \ref{claimp=n} show that $\Vect(e_1,\dots,e_n)$ is stabilized by $V$
(indeed, in that case $q=1$ and hence $\widetilde{L_1}=0$ and $\overline{C}=0$ for every $C \in \Mat_{n-1,1}(\K)$).

\vskip 3mm
Assume finally that $1<p<n$. Then $Ve_1 \subset \Vect(e_1,\dots,e_p)$ by Claim \ref{claimp<n}.
Note that the change of basis matrix
$R=\begin{bmatrix}
I_{n+1-q} & 0 \\
0 & T_1^T
\end{bmatrix}$ from Section \ref{setupsection} leaves $\Vect(e_1,\dots,e_p)$
invariant as $p \leq n+1-q$. Therefore we also have $(R^{-1}VR)e_1 \subset \Vect(e_1,\dots,e_p)$, and some of our recent findings
may be summed up as follows:

\begin{prop}\label{bilan}
Let $V$ be a maximal subspace of $\Mat_{n+1}(\K)$ with a trivial spectrum such that:
\begin{enumerate}[(i)]
\item $Ve_{n+1}=\Vect(e_1,\dots,e_n)$;
\item There are lower-triangular non-isotropic matrices $P \in \GL_p(\K)$,
$P_2 \in \GL_{n_2}(\K),\dots,
P_r \in \GL_{n_r}(\K)$, with $1<p<n$, such that
$V_{ul}=P \Mata_p(\K) \vee P_2 \Mata_{n_2}(\K) \vee \cdots \vee P_r \Mata_{n_r}(\K)$.
\end{enumerate}
Then $Ve_1 \subset \Vect(e_1,\dots,e_p)$.
\end{prop}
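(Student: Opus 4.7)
The plan is to recognize Proposition \ref{bilan} as the essential content of the analysis already carried out in the ``unglued'' subcase, packaged as a free-standing statement with minimal hypotheses. Under the assumptions (i) and (ii), $V$ is automatically placed in the framework of Section \ref{setupsection}: condition (i) says that $Ve_{n+1}$ is the maximal possible hyperplane, and condition (ii) provides exactly the reduced form of $V_{ul}$ assumed there (with the leading block $P$ playing the role of $P_1$, already lower-triangular by hypothesis). In particular, no preliminary conjugation is required.

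I would then redo the book-keeping from Section \ref{setupsection} in order to extract the auxiliary subspaces $V_m$ and $V_{lr}$ and the four families of generating matrices $A_L$, $B_C$, $E_U$ and $J$. Here Claim \ref{compatclaim} (which uses the induction hypothesis) remains available and yields a further decomposition $V_{lr} = W \vee Q\Mata_q(\K)$; after scaling and applying the invertible change of basis $R \in \GL_{n+1-q}(\K) \vee \GL_q(\K)$ that makes $Q$ lower-triangular, we recover precisely the ``unglued'' setting. Crucially, the change of basis $R$ leaves $\Vect(e_1,\dots,e_p)$ invariant since $p \leq n+1-q$ (as $p+q \leq n+1$), so the final assertion $Ve_1 \subset \Vect(e_1,\dots,e_p)$ is insensitive to this last modification.

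Since the matrices $A_L$, $B_C$, $E_U$ and $J$ span $V$, it then suffices to check that the first column of each of them has zero entries below the $p$-th position. For $A_L$, this comes from the fact that $Z\overline{L}^T$ and $\widetilde{C_1}$ are supported in the first $p-1$ coordinates, together with $f = 0$ from Claim \ref{claimf=0}. For $E_U$, the first column is identically zero after invoking Claim \ref{claimhu0}. For $B_C$ and $J$, Claim \ref{claimp<n} forces $\mu = 0$, $g = 0$, $a = b = 0$ and $c'_p = \dots = c'_{n-1} = 0$, yielding the desired vanishing.

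The main obstacle is verifying that all the intermediate claims really apply under the bare hypotheses of the proposition. Specifically, the argument of Claim \ref{claimp<n} uses $\# \K > 2$ (to rule out solutions of certain degree-$2$ polynomial identities in $x$, $y$, $z$) and the induction hypothesis (through Claim \ref{compatclaim} and Proposition \ref{uniquenessproposition1}); one must confirm that both remain in force in the stated generality. Once this is done, the proof transfers essentially verbatim from Claims \ref{claimf=0} and \ref{claimp<n}.
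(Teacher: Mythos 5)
Your proposal is correct and follows essentially the same route as the paper: Proposition \ref{bilan} is indeed just the packaging of the preceding ``unglued'' analysis (the reductions of Section \ref{setupsection}, Claim \ref{compatclaim}, and Claims \ref{claimhu0}, \ref{claimf=0}, \ref{claimp<n}), applied to a $V$ satisfying (i) and (ii), with the key observation that the final conjugation making $Q$ lower-triangular preserves $\Vect(e_1,\dots,e_p)$ since $p\leq n+1-q$. The only small point you gloss over, which the paper makes explicit, is that the starting hypothesis of Section \ref{setupsection} (no matrix of $V$ has $\Vect(e_{n+1})$ as column space) is itself a consequence of (i) and (ii) via the rank-theorem dimension count.
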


Note that the fact that $V$ contains no matrix with column space $\Vect(e_{n+1})$, our starting point in Section \ref{setupsection},
is a consequence of assumptions (i) and (ii) of Proposition \ref{bilan} (using the rank theorem to compute the dimension of $V$ from that of $V_{ul}$,
as in the beginning of Section \ref{setupsection}).

Now, all we need to complete the unglued case is to show that any $V$ satisfying
the assumptions of Proposition \ref{bilan} is reducible. Let $V$ be such a subspace,
with the above notations.
Let $x \in \Vect(e_1,\dots,e_p) \setminus \{0\}$.
Recall that the bilinear form $b : (X,Y) \in (\K^p)^2 \mapsto X^TPY$ is non-isotropic, and hence non-degenerate.
Denote by $X_0$ the matrix of coordinates of $x$ in $(e_1,\dots,e_p)$.
In the hyperplane $H:=\{Y \in \K^p : \; X_0^TY=0\}$, we may therefore find a
``right-sided orthogonal basis" $(f_2,\dots,f_p)$, i.e.\ $b(f_i,f_j)=0$ for every $(i,j)\in \lcro 2,p\rcro^2$ with $i<j$.
We then choose a non-zero vector $f_1$ such that $b(f_1,f_j)=0$ for every $j \in \lcro 2,p\rcro$.
It follows that $(f_1,\dots,f_p)$ is a basis of $\K^p$.
Denoting by $S$ the matrix of coordinates of $(f_1,f_2,\dots,f_p)$ in $(e_1,\dots,e_p)$,
the matrix $P':=S^TPS$ is lower-triangular and
$$S^T\bigl(P \Mata_p(\K)\bigr)(S^T)^{-1}=P' \Mata_p(\K).$$
Set then $T_2:=\begin{bmatrix}
S^T & 0  \\
0 & I_{n+1-p}
\end{bmatrix}\in \GL_{n+1}(\K)$ and
$$V':=T_2\,V\, T_2^{-1}.$$
Notice finally that $T_2$ stabilizes $\Vect(e_1,\dots,e_n)$, fixes $e_{n+1}$,
and obviously
$$V'_{ul}=P'\Mata_p(\K) \vee P_2 \Mata_{n_2}(\K) \vee \cdots \vee P_r \Mata_{n_r}(\K).$$
Thus Proposition \ref{bilan} applied to $V'$ shows that $V'e_1 \subset \Vect(e_1,\dots,e_p)$.
However $S$ maps $\Vect(e_2,\dots,e_p)$ to $\Vect(f_2,\dots,f_p)$, hence $S^TX_0 \in \Vect(e_1) \setminus \{0\}$. This yields
$$Vx \subset \Vect(e_1,\dots,e_p).$$
We conclude that $\Vect(e_1,\dots,e_p)$ is a non-trivial invariant subspace for $V$,
hence $V$ is reducible. This completes our proof of Theorem \ref{irreducible}.

\section{On large spaces of nilpotent matrices}\label{nilpotent}

In this short section, we show that the following famous theorem of Gerstenhaber on
linear subspaces of nilpotent matrices is an easy consequence of Theorem \ref{classtrivialspectrum}:

\begin{theo}[Gerstenhaber's theorem]
Let $\K$ be a field with at least three elements, and
$V$ be a linear subspace of $\Mat_n(\K)$ such that $\dim V=\binom{n}{2}$
and every matrix of $V$ is nilpotent. Then $V$ is similar to $\NT_n(\K)$.
\end{theo}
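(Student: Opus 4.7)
The plan is to derive the theorem as a direct consequence of Theorem \ref{classtrivialspectrum}, exploiting the fact that a nilpotent matrix always has trivial spectrum (its only eigenvalue in $\K$ is $0$). Applying that theorem to $V$ produces non-isotropic matrices $P_k \in \GL_{n_k}(\K)$ with $V \simeq P_1\Mata_{n_1}(\K) \vee \cdots \vee P_p\Mata_{n_p}(\K)$. Since $\Mata_1(\K)=\{0\}$ and the $n$-fold $\vee$-product of the trivial subspace $\{0\} \subset \Mat_1(\K)$ with itself is exactly $\NT_n(\K)$, it will suffice to show that every $n_k$ equals $1$.

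The reduction is immediate: fix $k$, and consider, in the similar copy of $V$, the block-diagonal matrices carrying $P_kA$ (with $A \in \Mata_{n_k}(\K)$) in the $k$-th diagonal block and zeros elsewhere. Such a matrix is nilpotent if and only if $P_kA$ is, so the nilpotence of every element of $V$ forces every element of $P_k\Mata_{n_k}(\K)$ to be nilpotent. The task thus boils down to the following statement: if $n \geq 2$ and $P \in \GL_n(\K)$ is non-isotropic, then $P\Mata_n(\K)$ contains a non-nilpotent matrix.

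To establish this, I would invoke Remark \ref{lowertriangremark} to replace $P$ with a congruent, lower-triangular, non-isotropic matrix $T$; by the resulting similarity $P\Mata_n(\K)\simeq T\Mata_n(\K)$, it is enough to exhibit a non-nilpotent matrix in $T\Mata_n(\K)$. Since $T$ is lower-triangular and the quadratic form $X \mapsto X^TTX$ is non-isotropic, each diagonal entry $t_{ii}=e_i^TTe_i$ is non-zero. Take $A \in \Mata_n(\K)$ whose upper-left $2 \times 2$ block equals $\begin{bmatrix}0 & 1 \\ -1 & 0\end{bmatrix}$ and whose remaining entries vanish. Then $TA$ is block-lower-triangular with upper-left block $\begin{bmatrix}0 & t_{11} \\ -t_{22} & t_{21}\end{bmatrix}$, of determinant $t_{11}t_{22}\neq 0$; its characteristic polynomial factors as $\lambda^{n-2}\bigl(\lambda^2-t_{21}\lambda+t_{11}t_{22}\bigr)$, which differs from $\lambda^n$, so $TA$ is not nilpotent. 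This contradiction forces $n_k=1$ for every $k$, and hence $V \simeq \NT_n(\K)$. I anticipate no serious obstacle: the substantive work sits in Theorem \ref{classtrivialspectrum} and Remark \ref{lowertriangremark}, and the only new ingredient is the explicit non-nilpotent witness $TA$ above.
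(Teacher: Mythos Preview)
Your proof is correct and follows the same overall architecture as the paper's: apply Theorem~\ref{classtrivialspectrum}, reduce to showing that each block size $n_k$ equals $1$, and rule out $n_k\geq 2$ by exhibiting a non-nilpotent element in $P_k\Mata_{n_k}(\K)$. Where you diverge is in the execution of that last step. The paper argues indirectly: from $\tr(PA)=0$ for all alternate $A$ it deduces that $P$ is symmetric, then uses a parity argument (even size would give non-singular alternate matrices) to force $q$ odd, then invokes a diagonalization result for non-alternate symmetric forms (with a separate reference for characteristic~$2$) to reduce to a diagonal $D$, and only then produces the explicit witness $DA$. You bypass all of this by invoking Remark~\ref{lowertriangremark} directly to replace $P$ by a congruent lower-triangular $T$ with non-zero diagonal, after which the same $2\times 2$ alternate block immediately yields a non-nilpotent $TA$. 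Your route is shorter, uniform across characteristics, and uses only tools already developed in the paper; the paper's detour through symmetry and diagonalization is unnecessary once Remark~\ref{lowertriangremark} is available.
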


See \cite{Gerstenhaber} for the original proof under the more restrictive assumption $\# \K \geq n$,
\cite{Mathes} for a very elegant proof using trace maps and a theorem of Jacobson, and
\cite{Serezhkin} for a proof with no restriction on the cardinality of $\K$.

\begin{proof}
The assumptions show that $V$ is a maximal linear subspace of $\Mat_n(\K)$ with a trivial spectrum.
Then $V \simeq P_1\Mata_{n_1}(\K) \vee \cdots \vee P_p\Mata_{n_p}(\K)$ for non-isotropic matrices
$P_1,\dots,P_p$. Since every matrix of $V$ is nilpotent, every matrix of $P_k \Mata_{n_k}(\K)$
is nilpotent for every $k \in \lcro 1,p\rcro$. \\
Let $q \geq 2$ be a positive integer and $P \in \GL_q(\K)$, and assume that $P$ is non-isotropic and
every element of $P\Mata_q(\K)$ is nilpotent. Note that $q$ is odd since $\Mata_q(\K)$ contains
non-singular matrices when $q$ is even.
Then $\tr(PA)=0$ for every $A \in \Mata_q(\K)$, which shows
that $P$ is symmetric. Since $q$ is odd and $P$ is non-singular, $P$ is not alternate hence
it is congruent to a non-singular diagonal matrix $D$ (even if $\K$ has characteristic 2, see \cite[Chapter 35]{invitquad}).
Thus $D\Mata_q(\K)$ is similar to $P\Mata_q(\K)$ and must therefore have a trivial spectrum.
Finally, set $K:=\begin{bmatrix}
0 & 1 \\
-1 & 0
\end{bmatrix}$ and $A:=\begin{bmatrix}
K & [0]_{2 \times (q-2)} \\
[0]_{(q-2) \times 2} & [0]_{q-2}
\end{bmatrix} \in \Mata_q(\K)$, and note that $DA$ is obviously non-nilpotent, a contradiction. \\
Returning to $V$, we deduce that $n_1=\dots=n_p=1$ hence $V \simeq \NT_n(\K)$.
\end{proof}

\section{On the exceptional case of $\F_2$}\label{F2}

In the proof of Theorem \ref{classtrivialspectrum}, we have repeatedly used the
assumption that the field $\K$ had at least $3$ elements. The reader will therefore not be surprised by the following
counterexample which shows that Theorem \ref{classtrivialspectrum} fails for the field $\F_2$.
Remark first that there is no non-isotropic matrix in $\GL_3(\F_2)$ (since every $3$-dimensional quadratic form over a finite field
is isotropic), hence no maximal linear subspace of $\Mat_3(\F_2)$ with a trivial spectrum has the form $P\Mata_3(\F_2)$.

Consider the following matrices of $\Mat_3(\F_2)$:
$$A:=\begin{bmatrix}
0 & 1 & 0 \\
0 & 0 & 0 \\
0 & 1 & 0
\end{bmatrix} \quad ; \quad
B:=\begin{bmatrix}
1 & 0 & 1 \\
1 & 0 & 0 \\
1 & 0 & 0
\end{bmatrix} \quad \text{and} \quad
C:=\begin{bmatrix}
0 & 0 & 0 \\
0 & 1 & 1 \\
1 & 1 & 0
\end{bmatrix}.$$
Using the identities $\forall x \in \F_2, \; x+x=0 \; \text{and} \; x^2=x$, a straightforward computation yields
$$\forall (x,y,z)\in \F_2^3, \quad \det(I_3+x\,A+y\,B+z\,C)=1.$$
Therefore the 3-dimensional subspace $V:=\Vect(A,B,C)$ has a trivial spectrum.
The fact that $A+B$ is non-singular shows however that $V$ is irreducible.
If $V$ were reducible indeed, then there would exist a $1$-dimensional subspace $W$ of
$\Mat_2(\F_2)$ such that $V \simeq \{0\} \vee W$ or $V \simeq W \vee \{0\}$, and in both cases
every matrix of $V$ would be singular.

\vskip 2mm
The classification of the irreducible maximal subspaces of $\Mat_n(\F_2)$ with a trivial spectrum thus remains
an unresolved issue.


\begin{thebibliography}{1}

\bibitem{AtkLloyd1}
M. D. Atkinson, S. Lloyd, Large spaces of matrices of bounded rank,
\newblock{\em Quart. J. Math. Oxford (2)}
\newblock{\textbf{31}}
\newblock{(1980)}
\newblock{253-262.}

\bibitem{AtkLloyd2}
M. D. Atkinson, S. Lloyd, Primitive spaces of matrices of bounded rank,
\newblock{\em J. Austral. Math. Soc.}
\newblock{\textbf{30}}
\newblock{(1981)}
\newblock{473-482.}

\bibitem{AtkLloyd3}
M. D. Atkinson, S. Lloyd, Primitive spaces of matrices of bounded rank II,
\newblock{\em J. Austral. Math. Soc.}
\newblock{\textbf{30}}
\newblock{(1983)}
\newblock{306-315.}

\bibitem{AtkStep}
M. D. Atkinson, N. M. Stephens, Spaces of matrices of bounded rank,
\newblock{\em Quart. J. Math. Oxford (2)}
\newblock{\textbf{29}}
\newblock{(1978)}
\newblock{221-223.}


\bibitem{Dieudonne}
J. Dieudonn\'e, Sur une g\'en\'eralisation du groupe orthogonal \`a quatre variables,
\newblock{\em Arch. Math.}
\newblock{\textbf{1}}
\newblock{(1949)}
\newblock{282-287.}

\bibitem{Gerstenhaber}
M. Gerstenhaber, On Nilalgebras and Linear Varieties of Nilpotent Matrices (I),
\newblock{\em  Amer. J. Math.}
\newblock{\textbf{80}}
\newblock{(1958)}
\newblock{614-622.}

\bibitem{Mathes}
B. Mathes, M. Omladi\v c, H. Radjavi, Linear spaces of nilpotent matrices,
\newblock{\em  Linear Algebra Appl.}
\newblock{\textbf{149}}
\newblock{(1991)}
\newblock{215-225.}

\bibitem{Quinlan}
R. Quinlan, Spaces of matrices without non-zero eigenvalues in their field of definition, and a question of Szechtman,
\newblock{\em  Linear Algebra Appl.}
\newblock{\textbf{434}}
\newblock{(2011)}
\newblock{1580-1587.}

\bibitem{invitquad}
C. de Seguins Pazzis, Invitation aux formes quadratiques,
\newblock Calvage \& Mounet, Paris (2011).

\bibitem{dSPlargerank}
C. de Seguins Pazzis, On the matrices of given rank in a large subspace,
\newblock{\em Linear Algebra Appl.}
\newblock{\textbf{435-1}}
\newblock{(2011)}
\newblock{147-151.}

\bibitem{dSPaffpres}
C. de Seguins Pazzis, The affine preservers of non-singular matrices,
\newblock{\em Arch. Math.}
\newblock{\textbf{95}}
\newblock{(2010)}
\newblock{333-342.}

\bibitem{dSPclass}
C. de Seguins Pazzis, The classification of large spaces of matrices with bounded rank,
\newblock{\em  arXiv preprint}
\newblock http://arxiv.org/abs/1004.0298

\bibitem{dSPsinglin}
C. de Seguins Pazzis, The singular linear preservers of non-singular matrices,
\newblock{\em Linear Algebra Appl.}
\newblock{\textbf{433}}
\newblock{(2010)}
\newblock{483-490.}

\bibitem{Serezhkin}
V. N. Serezhkin, On linear transformations preserving nilpotency,
\newblock{\em Izv. Akad. Nauk BSSR, Ser. Fiz.-Mat. Nauk}
\newblock{\textbf{6}}
\newblock{(1985)}
\newblock{46-50.}
\end{thebibliography}
\end{document}